\numberwithin{equation}{section}
\newtheorem{theorem}{Theorem}[section]
\newtheorem{lemma}[theorem]{Lemma} 
\newtheorem{proposition}[theorem]{Proposition} 
\newtheorem{corollary}[theorem]{Corollary} 
\theoremstyle{definition}
\newtheorem{definition}[theorem]{Definition} 
\newtheorem{notation}[theorem]{Notation} 
\newtheorem{remark}[theorem]{Remark}
\newcommand{\C}{\mathbb{C}}
\newcommand{\Cstar}{\C^\times} 
\newcommand{\Z}{\mathbb{Z}} 
\newcommand{\Q}{\mathbb{Q}} 
\newcommand{\R}{\mathbb{R}}
\newcommand{\bS}{\mathbb{S}} 
\newcommand{\tbS}{\widetilde{\bS}} 
\newcommand{\PP}{\mathbb{P}} 
\newcommand{\LL}{\mathbb{L}} 
\newcommand{\cO}{\mathcal{O}} 
\newcommand{\cS}{\mathcal{S}} 
\newcommand{\tcS}{\widetilde{\cS}} 
\newcommand{\cE}{\mathcal{E}} 
\newcommand{\cF}{\mathcal{F}} 
\newcommand{\cN}{\mathcal{N}}
\newcommand{\cL}{\mathcal{L}} 
\newcommand{\ovcL}{\overline{\cL}}
\newcommand{\cH}{\mathcal{H}} 
\newcommand{\hT}{{\widehat{T}}} 
\newcommand{\htau}{\hat{\tau}}
\newcommand{\hd}{\hat{d}} 
\newcommand{\hGamma}{\widehat{\Gamma}} 
\newcommand{\hu}{\hat{u}}
\newcommand{\ttau}{\tilde{\tau}} 
\newcommand{\tUpsilon}{\widetilde{\Upsilon}} 
\newcommand{\bx}{\mathbf{x}} 
\newcommand{\bN}{\mathbf{N}}
\newcommand{\bt}{\mathbf{t}}
\newcommand{\bbf}{\mathbf{f}}
\newcommand{\bV}{\mathbf{V}} 
\newcommand{\frs}{\mathfrak{s}} 
\newcommand{\frI}{\mathfrak{I}}
\newcommand{\iu}{\mathtt{i}} 
\newcommand{\vir}{{\rm vir}} 
\newcommand{\mov}{{\rm mov}}
\renewcommand{\sec}{{\rm sec}}  
\newcommand{\loc}{{\rm loc}} 
\newcommand{\Spec}{\operatorname{Spec}} 
\newcommand{\Proj}{\operatorname{Proj}} 
\newcommand{\id}{\operatorname{id}} 
\newcommand{\ev}{\operatorname{ev}} 
\newcommand{\pt}{\operatorname{pt}} 
\newcommand{\Frac}{\operatorname{Frac}} 
\newcommand{\Lie}{\operatorname{Lie}} 
\newcommand{\Eff}{\operatorname{Eff}} 
\newcommand{\PD}{\operatorname{PD}} 
\newcommand{\Hom}{\operatorname{Hom}} 
\newcommand{\Ext}{\operatorname{Ext}} 
\newcommand{\End}{\operatorname{End}} 
\newcommand{\rank}{\operatorname{rank}}
\newcommand{\ch}{\operatorname{ch}} 
\newcommand{\Ker}{\operatorname{Ker}}
\def\corr#1{\left\langle#1 \right\rangle} 
\def\parfrac#1#2{\frac{\partial #1}{\partial #2}} 
\begin{document} 

\title{Shift operators and toric mirror theorem}
\author{Hiroshi Iritani} 
\email{iritani@math.kyoto-u.ac.jp} 
\address{Department of Mathematics, Graduate School of Science, 
Kyoto University, Kitashirakawa-Oiwake-cho, Sakyo-ku, 
Kyoto, 606-8502, Japan}
\begin{abstract} 
We give a new proof of Givental's mirror theorem for toric manifolds 
using shift operators of equivariant parameters. 
The proof is almost tautological: it gives an A-model construction of the $I$-function 
and the mirror map. It also works for non-compact or 
non-semipositive toric manifolds. 
\end{abstract} 

\maketitle

\section{Introduction} 
In 1995, Seidel \cite{Seidel:pi1} introduced an invertible element 
of quantum cohomology associated to a Hamiltonian circle 
action. This has had many applications in symplectic topology. 
Seidel himself used it to construct non-trivial 
elements of $\pi_1$ of the group of Hamiltonian diffeomorphisms. 
McDuff-Tolman \cite{McDuff-Tolman} calculated Seidel's elements 
in a more general setting and obtained Batyrev's ring presentation of 
quantum cohomology of toric manifolds. Their method, however, 
does not yield explicit structure constants of quantum cohomology, 
i.e.~genus-zero Gromov-Witten invariants. 

Recently, Braverman, Maulik, Okounkov and Pandharipande  
\cite{Okounkov-Pandharipande:Hilbert, 
BMO:Springer, Maulik-Okounkov:qcoh_qgroup} 
introduced a shift operator of equivariant parameters 
for equivariant quantum cohomology. 
Their shift operators reduce to Seidel's invertible elements 
under the non-equivariant limit. 
In this paper, we show that equivariant genus-zero 
Gromov-Witten invariants of toric manifolds are reconstructed 
\emph{only from formal properties of shift operators}. 
This means that the equivariant quantum topology of toric manifolds is  
determined by its classical counterpart.

More specifically, we give a new proof of Givental's mirror theorem 
for toric manifolds, which is stated as follows: 
\begin{theorem}[\cite{Givental:toric_mirrorthm,LLY:III,
Iritani:genmir,Brown:toric_fibration}, see 
\S \ref{subsec:mirrorthm} for more details]
Let $X_\Sigma$ be a semi-projective toric manifold having 
a torus fixed point. 
Let $I(y,z)$ be the cohomology-valued hypergeometric series defined 
by 
\[
I(y,z) = z e^{\sum_{i=1}^m u_i \log y_i/z} 
\sum_{d\in \Eff(X_\Sigma)} 
\left(\prod_{i=1}^m 
\frac{\prod_{c=-\infty}^0(u_i + cz)}{\prod_{c=-\infty}^{u_i \cdot d} (u_i + cz)}
\right) 
Q^d y_1^{u_1\cdot d}\cdots y_m^{u_m \cdot d} 
\] 
where $u_i$, $i=1,\dots,m$ is the class of a prime toric divisor. 
Then $I(y,-z)$ lies in Givental's Lagrangian cone $\cL_{X_\Sigma}$ 
associated to $X_\Sigma$. 
\end{theorem}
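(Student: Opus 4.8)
The plan is to prove the theorem by realizing the hypergeometric $I$-function as the output of equivariant shift operators acting on the unit class, following the philosophy announced in the introduction: the equivariant quantum topology of $X_\Sigma$ is determined by its classical counterpart. Recall that $X_\Sigma$ carries the action of the big torus $T=(\Cstar)^m$ (via the prime toric divisors), and equivariant quantum cohomology $QH^*_T(X_\Sigma)$ is a module over $H^*_T(\pt)=\C[\lambda_1,\dots,\lambda_m]$. The shift operator $S_i$ associated to the $i$-th coordinate $\Cstar\subset T$ shifts $\lambda_i \mapsto \lambda_i - z$ and is, up to this shift, multiplication by an invertible element; its non-equivariant limit is Seidel's element. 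The key structural input I would use is the commutation relation between the shift operators and the equivariant quantum connection (the Dubrovin/Givental connection in the $z$-direction and in the base directions), together with the explicit computation of the Seidel element for the $\Cstar$-action coming from a prime toric divisor.

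\medskip

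\textbf{Step 1: Set up the equivariant $J$-function and the Lagrangian cone.} I would first recall that a point lying on the Givental cone $\cL_{X_\Sigma}$ is equivalent to the family $z$ times the $J$-function, $-z J(\tau,-z)$, evaluated after an arbitrary shift $\tau$ of the base point; and that the $J$-function is characterized by the equivariant quantum differential equation. I would work $T$-equivariantly throughout and take the non-equivariant limit only at the very end, since the whole point of the method is that shift operators exist only equivariantly. The cone $\cL_{X_\Sigma}$ is the non-equivariant limit of the equivariant cone $\cL_{X_\Sigma}^T$, so it suffices to produce a point on $\cL_{X_\Sigma}^T$ whose limit is $I(y,-z)$.

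\medskip

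\textbf{Step 2: Express $I$ via iterated shift operators.} The combinatorial heart is the identity
\[
\prod_{i=1}^m \frac{\prod_{c=-\infty}^0 (u_i+cz)}{\prod_{c=-\infty}^{u_i\cdot d}(u_i+cz)}
= \prod_{i=1}^m \prod_{c=1}^{u_i\cdot d}\frac{1}{u_i+cz}
\]
for $u_i\cdot d\ge 0$ (with the obvious interpretation otherwise), which is exactly the shape produced by composing shift operators $S_i^{u_i\cdot d}$: each application of $S_i$ contributes a factor built from the equivariant Euler class of the normal bundle of the Seidel space, and for the toric divisor action this Euler class is a product of linear forms $u_i+cz$. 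Thus I would prove, by induction on $\sum_i u_i\cdot d$, that
\[
\Bigl(\prod_{i=1}^m S_i^{k_i}\Bigr)\, z\,1 \;=\; \text{(the degree-}d\text{ term of a shifted }J\text{-function)},
\]
with $k_i = u_i\cdot d$, and that summing over $d\in\Eff(X_\Sigma)$ with the prefactor $z\,e^{\sum u_i\log y_i/z}$ reproduces $I(y,z)$ on the nose. Here $y_i$ plays the role of the Novikov/equivariant shift bookkeeping variable attached to the $i$-th $\Cstar$-factor.

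\medskip

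\textbf{Step 3: Deduce membership in the cone.} Since each $S_i$ is an operator on the space of sections of the quantum $D$-module that preserves the cone $\cL_{X_\Sigma}^T$ (this is the formal property of shift operators that replaces all Gromov–Witten input), and since $z\,1$ is the value of the $J$-function at the origin and hence a point of $\cL_{X_\Sigma}^T$, the iterated image $\prod_i S_i^{k_i}(z\,1)$ lies on $\cL_{X_\Sigma}^T$; assembling over $d$ and recognizing the result as $I(y,z)$, we conclude $I(y,-z)\in\cL_{X_\Sigma}$ after the non-equivariant limit. I expect the main obstacle to be Step 2: showing that the \emph{ordered} composition of shift operators, which a priori involves quantum products and lower-order corrections in $z$, collapses to the clean hypergeometric factor. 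This requires (a) the precise formula for the Seidel element of each toric-divisor $\Cstar$-action — which should follow from the McDuff–Tolman-type computation or, better, directly from a fixed-point/localization analysis of the Seidel space, which is itself toric — and (b) checking that the shift operators for different $i$ commute appropriately (or that the non-commutativity contributes only through Novikov-variable shifts that are absorbed into $Q^d y_1^{u_1\cdot d}\cdots y_m^{u_m\cdot d}$). A secondary technical point is handling non-compactness: one must work with equivariant cohomology localized at the fixed points and ensure all the relevant generating series converge/make sense in the appropriate completed ring, which is why the hypothesis ``semi-projective with a torus fixed point'' is imposed.
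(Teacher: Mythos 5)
There is a genuine gap, and it sits exactly where the paper puts its main technical effort. Your Step 2 identity is not correct as stated: the shift operator on the Givental space (Definition \ref{def:shift_Givental}) is not the ``classical'' multiplication by a product of linear forms $u_i+cz$; at every non-minimal fixed component it carries a Novikov factor $Q^{\sigma_i-\sigma_{\min}}$, so $\bigl(\prod_i \cS_i^{k_i}\bigr)(z\,1)$ is a full $Q$-series mixing all degrees and cannot be identified with the single degree-$d$ coefficient of $I$. What is true — and what the paper proves by a one-line localization computation using Lemma \ref{lem:fixed_weight} — is the differential equation $z\,\partial_{y_i} I(y,z) = \cS_i I(y,z)$ for the \emph{entire} series (Lemma \ref{lem:I_flow}). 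Your Step 3 then breaks down for a second reason: even granting that $\cS_i$ maps a cone point into $z$ times its tangent space (Theorem \ref{thm:intertwine}), ``assembling over $d$'' is not a legitimate operation, since the cone is not a linear subspace and the individual degree-$d$ pieces of $I$ are not cone points; an infinite sum of cone points need not lie on the cone. Nor can you start the flow at a known cone point such as $z\,1$: the natural ``initial value'' of $I$ is its $Q\to 0$, $y\to 0$ asymptotics $z\,e^{\sum_i u_i\log y_i/z}(1+O(Q))$, and the statement that the integral curve with \emph{this} asymptotic shape actually lies on the cone is essentially equivalent to the mirror theorem itself.

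The missing ingredient is the construction of an integral curve of the same vector fields that manifestly lies on the cone and has the same asymptotics, namely a pair $(\tau(y),\Upsilon(y,z))$ with $\tau(y)=\sum_i u_i\log y_i + O(Q)$, $\Upsilon=1+O(Q)$, solving $\partial_{y_i}\tau = S_i(\tau)$ and $\partial_{y_i}\Upsilon = [z^{-1}\bS_i(\tau)]_+\Upsilon$, so that $\bbf(y)=zM(\tau(y),z)\Upsilon(y,z)\in\cL$ by the tangent-space description of the cone. Producing these solutions is a nontrivial recursion on Novikov degree that uses the explicit classical limit of $\bS_i$ (Lemma \ref{lem:classical_shift}), the commutativity of the flows, and fixed-point restrictions to propagate the equation from one index $i_0$ with $u_{i_0}\cdot d_0>0$ to all $i$; this is Proposition \ref{prop:tau_Upsilon}, which the paper flags as the most technical point. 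Once both $I$ and $\bbf$ are known to satisfy the ODE with the same leading form, a short uniqueness argument (again by induction on $\omega\cdot d$ at fixed points) gives $I=\bbf\in\cL$. Your proposal contains the correct guiding picture (shift operators as the only Gromov--Witten input, fixed-point analysis of the Seidel space, commutativity up to Novikov shifts), but without the $(\tau,\Upsilon)$ construction and the uniqueness lemma it does not yield cone membership; also note that the theorem here is intrinsically $T$-equivariant (for non-compact $X_\Sigma$ the invariants are defined via localization), so no non-equivariant limit should be taken.
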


We prove this theorem in the following way. 
Recall that equivariant genus-zero Gromov-Witten invariants of 
a $T$-variety $X$ can be encoded by an infinite-dimensional 
Lagrangian submanifold $\cL_X$ of the symplectic 
vector space $\cH_X$ \cite{Givental:symplectic}:
\[
\cH_X = H^*_T(X)\otimes_{H_T^*(\pt)} \Frac\left(H_T^*(\pt)[z]\right). 
\]
The space $\cH_X$ is called the Givental space and 
$\cL_X$ is called the Givental cone. 
By the general theory, each $\C^\times$-subgroup $k \colon \C^\times \to T$ 
defines a shift operator $\cS_k$ acting on the Givental space $\cH_X$ 
and induces a vector field on $\cL_X$: 
\[
\cL_X \ni \bbf \longmapsto z^{-1} \cS_k\bbf \in T_\bbf \cL_X.  
\]
The operator $\cS_k$ is determined by $T$-fixed loci in $X$
and their normal bundles  (see Definition \ref{def:shift_Givental}). 
For toric manifolds, we have a shift operator $\cS_i$ for each 
torus-invariant prime divisor. Then we identify 
the $I$-function $I(y,z)$ with 
an integral curve of the commuting vector fields $\bbf \mapsto z^{-1} \cS_i \bbf$. 
\begin{theorem} 
Givental's $I$-function $I(y,z)$ is a unique integral curve 
which satisfies the differential equation: 
\[
\parfrac{I(y,z)}{y_i} = z^{-1} \cS_i I(y,z) 
\qquad \qquad 
i=1,\dots,m 
\]
and is of the form $
I(y,z)=z e^{\sum_{i=1}^m u_i \log y_i/z} 
(1 + \sum_{d\in \Eff(X_\Sigma)\setminus\{0\}} 
I_d Q^d y^d)$, where we set $y^d = \prod_{i=1}^m 
y_i^{u_i\cdot d}$. 
\end{theorem}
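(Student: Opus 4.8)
The plan is to treat this as a statement about a differential equation in the variables $y_1,\dots,y_m$ whose solution space is one-dimensional under the stated normalization. First I would establish that the shift operators $\cS_i$ commute with one another and define flat vector fields on the Givental cone; this should follow from the formal properties of shift operators set up in the definition of $\cS_k$ (Definition \ref{def:shift_Givental}), since the $\C^\times$-subgroups associated to distinct prime divisors commute inside $T$. Commutativity of the system $\partial I/\partial y_i = z^{-1}\cS_i I$ is what makes the Frobenius-type integrability hold, so that an integral curve of the prescribed shape exists and is unique once its value is pinned down along $y=0$ (where $Q^d y^d \to 0$ for $d\neq 0$, forcing the leading term $z\,e^{\sum_i u_i \log y_i/z}$).

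The core computation is to check that the explicit hypergeometric series $I(y,z)$ in the theorem statement actually satisfies $\partial I/\partial y_i = z^{-1}\cS_i I$. I would compute both sides coefficient-by-coefficient in $Q^d y^d$. On the left, differentiating $z\,e^{\sum u_j\log y_j/z}\,I_d\,Q^d y^d$ in $y_i$ brings down a factor $(u_i/z + (u_i\cdot d)/y_i\cdot y_i/\text{(something)})$... more precisely it produces $(u_i + z\,u_i\cdot d)/(z)$ times the $d$-term plus derivative-of-$I_d$ contributions; the ratio of consecutive hypergeometric coefficients $I_{d}/I_{d-\beta_i}$, where $\beta_i$ is the curve class dual to $u_i$, is exactly $1/\prod_{c=u_{i}\cdot(d-\beta_i)+1}^{u_i\cdot d}(u_i+cz)$, a single linear factor when $u_i\cdot\beta_i = 1$. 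On the right, the shift operator $\cS_i$ acts on the equivariant cohomology by shifting the equivariant parameter dual to the $i$-th $\C^\times$ and is built from the $T$-fixed-point data; for a toric manifold the fixed points and their normal weights are entirely combinatorial, so $\cS_i$ acting on $e^{\sum u_j\log y_j/z}$ reproduces precisely the translation $u_i \mapsto u_i - z$ together with the divisor multiplication, and matching these against the hypergeometric recursion is the heart of the argument. I expect this matching — reconciling the A-model definition of $\cS_i$ via localization with the closed-form product over $c$ in the $I$-function — to be the main obstacle, essentially a bookkeeping problem about how the shift interacts with the Novikov variable $Q$ and the divisor classes $u_j$.

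Finally, for uniqueness I would argue that any solution of the form $z\,e^{\sum u_i\log y_i/z}(1+\sum_{d\neq 0}I_d Q^d y^d)$ has its coefficients $I_d$ determined recursively: the equation $\partial I/\partial y_i = z^{-1}\cS_i I$, read off the coefficient of a given monomial, expresses $I_d$ in terms of $I_{d'}$ with $d'$ strictly smaller in the partial order on $\Eff(X_\Sigma)$ (specifically $d' = d - \beta_i$ for the relevant $i$), with the normalization $I_0 = 1$ providing the base case. Since $\Eff(X_\Sigma)$ is a finitely generated monoid, this recursion has a unique solution, which by the previous paragraph is the stated $I(y,z)$. The last line of the theorem's conclusion — that $I(y,-z) \in \cL_{X_\Sigma}$ — then follows from the first theorem quoted, or rather is what the whole scheme is designed to reprove: being an integral curve of the flat vector fields $\bbf\mapsto z^{-1}\cS_i\bbf$ on $\cL_X$ keeps $I(y,-z)$ on the cone once a single point of the curve is known to lie on it, and that single point is the small $J$-function at $y$-degree zero, which is on $\cL_{X_\Sigma}$ by definition.
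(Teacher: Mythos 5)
Your proposal has the right global shape (verify that $I$ solves the system by fixed-point localization, then prove uniqueness degree by degree), but the two steps that actually carry the weight are missing or would fail as sketched. First, the verification that $\parfrac{I}{y_i}=z^{-1}\cS_i I$ is exactly the step you defer as ``the main obstacle,'' and your description of how $\cS_i$ acts is not quite right: there is in general no curve class $\beta_i$ ``dual to $u_i$'' (there are $m$ divisors but only $\operatorname{rank} H^2$ independent curve classes), and $\cS_i$ is not simply ``$u_i\mapsto u_i-z$ plus divisor multiplication.'' At a fixed point $x$ the operator is $\Delta_x(e_i)\,e^{-z\partial_{\lambda_i}}$, where $\Delta_x(e_i)$ carries a fixed-point-dependent Novikov shift $Q^{d_i(x)}$ with $d_i(x)=\sigma_x-\sigma_{\min}$; the computation only closes up because of the identity $u_j(x)\cdot e_i=\delta_{ij}-u_j\cdot d_i(x)$ (Lemma \ref{lem:fixed_weight}), which converts the $\lambda_i$-shift of the prefactor $e^{\sum_j u_j(x)\log y_j/z}$ into the factor $y^{d_i(x)}/y_i$, after which one reindexes the sum by $d\mapsto d-d_i(x)$ (this is the content of Lemma \ref{lem:I_flow}). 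Without this identity the ``bookkeeping'' you postpone does not reduce to a single linear factor, so the core of the theorem is left unproved.

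Second, the uniqueness argument as you state it would fail. Extracting the coefficient of $Q^{d_0}y^{d_0}$ from the equation does \emph{not} express $I_{d_0}$ in terms of strictly smaller classes: at a fixed point $x$ lying on the divisor $\{z_i=0\}$ one has $d_i(x)=0$, so the $Q$-degree-zero part of $\cS_i$ contributes the same-degree term $u_i(x)\,e^{-z\partial_{\lambda_i}}$, and the relation is a nontrivial difference equation in $\lambda_i$ for $I_{d_0}(x)$ rather than a recursion; moreover for a fixed $i$ one may have $u_i\cdot d_0\le 0$, so finite generation of $\Eff(X_\Sigma)$ by itself gives nothing. The paper's fix, which your sketch needs, is to restrict the difference $g$ of two solutions to each fixed point $x$ and choose $i_0$ with $x\notin\{z_{i_0}=0\}$ and $u_{i_0}\cdot d_0>0$ (possible because a K\"ahler class is a positive combination of the $u_i$ not vanishing at $x$): then $u_{i_0}(x)=0$ kills both the $z^{-1}u_{i_0}(x)$ term on the left and the degree-zero part of $\cS_{i_0}$ on the right, leaving $(u_{i_0}\cdot d_0)\,g_{d_0}(x)=0$, and induction on $\omega\cdot d$ finishes. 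Two smaller points: commutativity of the $\cS_i$ and Frobenius integrability are not needed for this statement (they enter in the separate construction of $\tau(y)$ and $\Upsilon(y,z)$), and uniqueness cannot be obtained by ``pinning the value at $y=0$'' via standard ODE theory, since the system is formal and singular there; and the final claim that flowing from one point keeps $I(y,-z)$ on the cone is not part of this theorem and is itself not justified -- in the paper cone membership is proved by constructing $\tau(y),\Upsilon(y,z)$ with $zM(\tau(y),z)\Upsilon(y,z)$ an integral curve of the same normalized form and then invoking the uniqueness lemma.
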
 

The $I$-function defines a mirror map $y\mapsto \tau(y) 
\in H^*_T(X)$ via Birkhoff factorization \cite{Coates-Givental,Iritani:genmir}. 
As a corollary to our proof, we obtain the following relationship between 
the equivariant Seidel elements $S_i(\tau)$ and the mirror map. 
This generalizes a previous result \cite{Gonzalez-Iritani:Selecta} 
in the semipositive case obtained in joint work with Gonzalez. 
\begin{theorem} 
The mirror map $\tau(y)$ associated to the $I$-function is a 
unique integral curve which satisfies the differential equation 
\[
\parfrac{\tau(y)}{y_i} = S_i(\tau(y))  \qquad \qquad 
i=1,\dots,m 
\]
and is of the form 
$\tau(y) = \sum_{i=1}^m u_i \log y_i 
+ \sum_{d\in \Eff(X_\Sigma)\setminus \{0\}} \tau_d Q^d y^d$.  
\end{theorem}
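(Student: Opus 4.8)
The plan is to transport the differential equation $\parfrac{I}{y_i}=z^{-1}\cS_i I$ for the $I$-function from the Givental cone $\cL_{X_\Sigma}$ down to the parameter space $H^*_T(X_\Sigma)$ along the mirror map, using that the mirror map is nothing but the ``base projection'' of the curve $y\mapsto I(y,-z)\in\cL_{X_\Sigma}$ and that the vector field $\bbf\mapsto z^{-1}\cS_i\bbf$ is compatible with that projection.

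First I would invoke the relation, established earlier in the paper, between the shift operator $\cS_i$ on $\cH_{X_\Sigma}$ (Definition~\ref{def:shift_Givental}) and the equivariant Seidel element $S_i(\tau)$. Recall the structure of $\cL_{X_\Sigma}$: it is a cone, its tangent spaces $T_\bbf\cL_{X_\Sigma}$ are invariant under multiplication by $z$, and $\cL_{X_\Sigma}\cap T_\bbf\cL_{X_\Sigma}=z\,T_\bbf\cL_{X_\Sigma}$ with $T_\bbf\cL_{X_\Sigma}/z\,T_\bbf\cL_{X_\Sigma}\cong H^*_T(X_\Sigma)$. The rulings $z\,T_\bbf\cL_{X_\Sigma}$ therefore determine a base projection $\pi\colon\cL_{X_\Sigma}\to H^*_T(X_\Sigma)$, sending $\bbf$ to the parameter $\tau$ of the ruling through it, whose differential at $\bbf$ is the quotient $T_\bbf\cL_{X_\Sigma}\to T_\bbf\cL_{X_\Sigma}/z\,T_\bbf\cL_{X_\Sigma}\cong H^*_T(X_\Sigma)$ with kernel $z\,T_\bbf\cL_{X_\Sigma}$. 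Since $z^{-1}\cS_i\bbf\in T_\bbf\cL_{X_\Sigma}$ for $\bbf\in\cL_{X_\Sigma}$, the vector field $\bbf\mapsto z^{-1}\cS_i\bbf$ is tangent to $\cL_{X_\Sigma}$, and the content of the relation recalled above is precisely that
\[
z^{-1}\cS_i\bbf\ \equiv\ S_i\bigl(\pi(\bbf)\bigr)\pmod{z\,T_\bbf\cL_{X_\Sigma}},\qquad \bbf\in\cL_{X_\Sigma},
\]
with the left-hand side depending on $\bbf$ only through $\pi(\bbf)$; equivalently, $z^{-1}\cS_i$ is $\pi$-related to the vector field $\tau\mapsto S_i(\tau)$ on $H^*_T(X_\Sigma)$.

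Next I would feed in the curve $y\mapsto I(y,-z)$. By Givental's mirror theorem (proved above) this curve lies in $\cL_{X_\Sigma}$; by the definition of the mirror map through Birkhoff factorization, $\tau(y)=\pi\bigl(I(y,-z)\bigr)$; and by the differential equation for the $I$-function (proved above), this curve is --- under the standard $z\leftrightarrow-z$ identification relating $\cL_{X_\Sigma}\subset\cH_{X_\Sigma}$ to the space of values of the $I$-function --- an integral curve of $\bbf\mapsto z^{-1}\cS_i\bbf$ for each $i$. Applying $d\pi$ and using that $\pi$ carries integral curves of $z^{-1}\cS_i$ to integral curves of $S_i(\cdot)$, I obtain $\parfrac{\tau(y)}{y_i}=S_i(\tau(y))$ for $i=1,\dots,m$; the identity $\parfrac{}{y_j}\parfrac{\tau}{y_i}=\parfrac{}{y_i}\parfrac{\tau}{y_j}$ merely records the commutativity of the $\cS_i$ used already in the preceding theorem.

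It remains to pin down the shape of $\tau(y)$ and deduce uniqueness. At $Q=0$ the $I$-function reduces to $-z\,e^{-\sum_i u_i\log y_i/z}=-z+\sum_i u_i\log y_i+O(1/z)$, so there the Birkhoff factorization is trivial and $\pi$ returns $\sum_i u_i\log y_i$; since the rest of $I(y,-z)$ --- the sum of the terms with $d\neq0$ --- is a power series in the monomials $Q^dy^d$, the same holds for $\tau(y)-\sum_i u_i\log y_i$, so $\tau(y)=\sum_{i=1}^m u_i\log y_i+\sum_{d\in\Eff(X_\Sigma)\setminus\{0\}}\tau_d Q^dy^d$ with $\tau_d\in H^*_T(X_\Sigma)$. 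Uniqueness among integral curves of this shape is then formal: comparing coefficients of $Q^dy^d$ in $y_i\parfrac{\tau}{y_i}=y_iS_i(\tau(y))$, the term carrying $\tau_d$ on the left is $(u_i\cdot d)\,\tau_d\,Q^dy^d$, while the right-hand side contributes only combinations of the $\tau_{d'}$ with $d-d'\in\Eff(X_\Sigma)$ and $d'\neq d$, so the $\tau_d$ are determined by a triangular recursion in the partial order on $\Eff(X_\Sigma)$, no integration constant being free because the $Q^0$-part is prescribed to equal $\sum_i u_i\log y_i$. The only ingredient here that is not purely formal is the relation recalled in the second paragraph between the shift-operator vector field and the Seidel-element vector field; granting it, the remaining work is bookkeeping --- fixing the $z\leftrightarrow-z$ conventions relating $\cL_{X_\Sigma}$ to fundamental solutions, and, in the non-compact case, verifying that $\pi$, the rulings, and the shift operators are all well-defined after the relevant localization and completion.
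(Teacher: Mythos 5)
Your reduction of the theorem to ``bookkeeping'' breaks down exactly at the point the paper identifies as its most technical step. The recursion you propose for uniqueness (and implicitly for existence, if one wants a proof not relying on the mirror theorem) is not triangular: because the lift $\tau\mapsto\htau$ is not $H_T^*(\pt)$-linear, the $Q^0$-part of the Seidel element depends on $\tau$. By Lemma \ref{lem:classical_shift}, $\lim_{Q\to 0}S_i(\tau)=u_i\,e^{-\parfrac{\tau(v,\lambda)}{\lambda_i}}$ once $\tau$ is written as a polynomial in $v_1,\dots,v_m,\lambda_1,\dots,\lambda_m$. Consequently, after the divisor equation $y_iS_i(\tau(y))=S_i(\tau';Qy)$, the coefficient of $Q^{d_0}y^{d_0}$ of the equation $y_i\parfrac{\tau}{y_i}=S_i(\tau';Qy)$ contains the unknown $\tau_{d_0}$ on the right-hand side as well, through $-u_i\parfrac{\tau_{d_0}}{\lambda_i}$; the equation to solve is $(u_i\cdot d_0)\,\tau_{d_0}+(v_i+\lambda_i)\parfrac{\tau_{d_0}}{\lambda_i}=g_i(v,\lambda)$, not $(u_i\cdot d_0)\,\tau_{d_0}=(\text{known})$. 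Showing that this has a unique polynomial solution, and that the solutions for different $i$ are consistent, is precisely Proposition \ref{prop:tau_Upsilon}: one chooses $i_0$ with $u_{i_0}\cdot d_0>0$, solves downward in the $\lambda_{i_0}$-degree, uses commutativity of the flows to control the discrepancy for $i\neq i_0$, and kills the resulting class $\alpha$ satisfying $(u_{i_0}\cdot d_0)\alpha+u_{i_0}\parfrac{\alpha}{\lambda_{i_0}}=0$ by restricting to torus-fixed points (Lemma \ref{lem:restriction_linearcomb} is what allows restriction to commute with $\partial_{\lambda_{i_0}}$). None of this is visible in your argument, and as stated your claim that the right-hand side involves only $\tau_{d'}$ with $d'\neq d$ is false.

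There is also a structural problem with the existence half of your argument inside this paper: you take as inputs that $I(y,-z)$ lies on the cone and that the Birkhoff-factorization mirror map equals the cone projection $\pi(I(y,-z))$, but here Theorem \ref{thm:mirrorthm} is itself deduced from Proposition \ref{prop:tau_Upsilon}, i.e.\ from the statement you are proving; your route is non-circular only if you import the mirror theorem from the earlier literature, whereas the paper constructs $\tau(y)$ and $\Upsilon(y,z)$ directly and obtains the mirror theorem as output. Finally, a small but real imprecision in your second paragraph: under the parametrization $\bbf=zM(\tau,z)\Upsilon$, the naive quotient map $T_\bbf\cL\to T_\bbf\cL/zT_\bbf\cL\cong H_T^*(X_\Sigma)$ sends $z^{-1}\cS_i\bbf$ to $S_i(\tau)\star\Upsilon|_{z=0}$, not to $S_i(\tau)$; to get $d\pi(z^{-1}\cS_i\bbf)=S_i(\pi(\bbf))$ you must use the actual differential of $\pi$ (equivalently, cancel the invertible factor $\Upsilon|_{z=0}=1+O(Q)$), which is the vector-field correspondence $(\bV_i)_{\tau,\Upsilon}=(S_i(\tau),[z^{-1}\bS_i(\tau)]_+\Upsilon)$ spelled out just before Proposition \ref{prop:tau_Upsilon}.
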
 
The mirror map and the $I$-function are related by 
the formula 
\[
I(y,z) = z M(\tau(y),z) \Upsilon(y,z)
\]
where $M(\tau,z)$ is a fundamental solution for 
the quantum differential equation (Proposition \ref{prop:fundsol}) 
and $\Upsilon(y,z)$ is an $H^*_T(X)[z]$-valued function. 
We can also characterize $\Upsilon(y,z)$ by the differential 
equation 
\[
\parfrac{\Upsilon(y,z)}{y_i} = [z^{-1} \bS_i(\tau(y))]_+\Upsilon(y,z) 
\]
where $\bS_i(\tau)$ is the shift operator acting on quantum cohomology. 
The most technical point in our proof 
is to show the existence of solutions $\tau(y)$ and $\Upsilon(y,z)$ 
with prescribed asymptotics 
(see Proposition \ref{prop:tau_Upsilon}). 

Since we do not assume that $c_1(X_\Sigma)$ is nef, the mirror map 
$\tau(y)$ does not necessarily lie in $H^{\le 2}_T(X)$. 
For this reason, we need to generalize shift operators to big quantum cohomology. 
We also observe that shift operators are closely related to 
the $\hGamma$-integral structure \cite{Iritani, KKP, CIJ}. 
We show that a flat section of the quantum 
connection associated to an equivariant vector bundle 
in the formalism of $\hGamma$-integral structure 
is invariant under shift operators (Proposition \ref{prop:Gamma_integral_shift}).  

This paper is structured as follows. 
In \S \ref{sec:qcoh}, we review equivariant 
quantum cohomology and in \S \ref{sec:shift}, 
we study shift operators for big quantum cohomology. 
In \S \ref{sec:mirrorthm}, we prove a mirror theorem 
for toric manifolds.

\subsection{Notation} 
Unless otherwise stated, we consider cohomology groups 
with complex coefficients. We use the following notation 
throughout the paper. 
\begin{itemize} 
\item $T\cong (\Cstar)^m$: an algebraic torus;  
\item $X$: a smooth $T$-variety; 
$X_\Sigma$: a smooth toric variety associated to a fan $\Sigma$; 
\item $\hT = T\times \Cstar$; 
\item $\lambda\in \Lie(T)$, 
$z\in \Lie(\Cstar)$: equivariant parameters for $\hT$;  
\item $H_\hT(X)_{\loc} := 
H_\hT^*(X)\otimes_{H_\hT^*(\pt)} \Frac(H_\hT^*(\pt)) 
= H_T^*(X)\otimes_{H_T^*(\pt)} \Frac(H_T^*(\pt)[z])$: 
the Givental space.  
\end{itemize} 

\medskip 
\noindent 
{\bf Acknowledgments.} 
The author thanks Tom Coates, Alessio Corti, Eduardo Gonzalez, 
Hiraku Nakajima and Hsian-Hua Tseng for very helpful 
discussions on shift operators, Seidel representations 
and toric mirror symmetry. This work is supported by 
JSPS Kakenhi Grant Number 25400069. 

\section{Equivariant Quantum Cohomology} 
\label{sec:qcoh} 
\subsection{Hypotheses on a $T$-Space} 
\label{subsec:hypotheses} 
Let $T\cong (\Cstar)^m$ be an algebraic torus. 
Let $X$ be a smooth variety over $\C$ 
equipped with an algebraic $T$-action. 
We assume the following conditions: 
\begin{enumerate} 
\item 
\label{semiproj} $X$ is semi-projective, i.e.~the natural map 
$X\to X_0 :=\Spec H^0(X,\cO)$ is projective; 
\item  \label{weight_cone} 
all $T$-weights appearing in the $T$-representation 
$H^0(X,\cO)$ are contained 
in a strictly convex cone in $\Hom(T,\Cstar)\otimes \R$ 
and $H^0(X,\cO)^T = \C$. 
\end{enumerate} 
A $T$-space $X$ with these assumptions 
has nice cohomological properties, 
see, e.g.~\cite{HaRV:large}. 
These conditions ensure that the $T$-fixed set $X^T$ is projective. 
We also note the following:  
\begin{proposition} 
\label{prop:equiv_formal}
A smooth $T$-variety $X$ satisfying the conditions {\rm (\ref{semiproj}), 
(\ref{weight_cone})} 
is equivariantly formal, i.e.~$H_T^*(X)$ is a free module 
over $H_T^*(\pt)$ and there is a non-canonical isomorphism 
$H_T^*(X) \cong H^*(X) \otimes H_T^*(\pt)$ as an 
$H_T^*(\pt)$-module. 
\end{proposition}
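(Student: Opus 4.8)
The plan is to establish equivariant formality by constructing a suitable filtration of $X$ by $T$-invariant open subsets (a Bia\l ynicki-Birula–type stratification adapted to the circle-action coming from a generic one-parameter subgroup of $T$), and then run the usual Morse-theoretic/spectral-sequence argument showing that the cells are even-dimensional, so that the Leray–Hirsch / degeneration mechanism forces $H_T^*(X)$ to be free over $H_T^*(\pt)$ with the asserted non-canonical splitting. First I would invoke hypotheses (\ref{semiproj}) and (\ref{weight_cone}): semi-projectivity gives a $T$-equivariant projective morphism $\pi\colon X\to X_0=\Spec H^0(X,\cO)$, while the strict convexity of the weight cone in (\ref{weight_cone}) together with $H^0(X,\cO)^T=\C$ means $X_0$ is an affine cone with a unique $T$-fixed point (the vertex), and that a generic cocharacter $k\colon \Cstar\to T$ acts on $X_0$ with all weights in a single open half-space, hence contracts $X_0$ to the vertex as the parameter goes to $0$. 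Pulling this back along $\pi$, the induced $\Cstar$-action on $X$ has the property that $\lim_{t\to 0} k(t)\cdot x$ exists for every $x\in X$, so $X$ is covered by the attracting sets of the $\Cstar$-fixed components.

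Next I would apply the Bia\l ynicki-Birula decomposition: $X=\bigsqcup_\alpha X_\alpha^+$, where $X_\alpha^+$ is the set of points flowing to the fixed component $F_\alpha\subset X^{\Cstar}=X^T$ (equality using genericity of $k$), and each $X_\alpha^+$ is a $T$-invariant locally closed subvariety, affine-bundle over $F_\alpha$ of rank equal to the dimension of the positive-weight part of the normal bundle $N_{F_\alpha/X}$. Since $X$ is smooth, each $F_\alpha$ is smooth and projective (this is exactly the projectivity of $X^T$ noted before the proposition), and each stratum closure relation gives a filtration $X=Z_0\supset Z_1\supset\cdots$ by closed $T$-invariant subsets with $Z_j\setminus Z_{j+1}$ a disjoint union of such affine bundles. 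I would then use the $T$-equivariant Thom isomorphism $H_T^*(Z_j\setminus Z_{j+1})\cong \bigoplus_\alpha H_T^{*-2r_\alpha}(F_\alpha)$, and because $F_\alpha$ is smooth projective it is itself equivariantly formal by the classical argument (its cohomology is concentrated in even degrees when we further stratify it, or one cites the standard fact), so each graded piece is a free $H_T^*(\pt)$-module concentrated in even degree. Feeding this into the long exact sequences of the pairs $(Z_j, Z_{j+1})$ and inducting downward, all connecting maps vanish by parity, giving short exact sequences of free modules that split; assembling them yields that $H_T^*(X)$ is free over $H_T^*(\pt)$ and that the restriction map $H_T^*(X)\to H^*(X)$ is surjective, whence a (non-canonical) choice of lifts of an additive basis of $H^*(X)$ produces the isomorphism $H_T^*(X)\cong H^*(X)\otimes H_T^*(\pt)$ of $H_T^*(\pt)$-modules.

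The main obstacle — really the only substantive point — is justifying that the Bia\l ynicki-Birula pieces are well-behaved in this non-compact, semi-projective setting: one must check that a generic cocharacter $k$ simultaneously has a well-defined limit $\lim_{t\to 0}k(t)\cdot x$ for all $x\in X$ (this is where hypothesis (\ref{weight_cone}) is essential — it rules out directions in which the flow escapes to infinity), that the fixed locus of $k$ coincides with the full $T$-fixed locus (genericity of the cocharacter, a standard transversality argument on weights), and that the attracting sets assemble into an honest $T$-equivariant affine-cell filtration with projective fixed components; all of this is available from the cited reference \cite{HaRV:large} and the general theory of semi-projective varieties, so I would simply cite it and spend the proof on the spectral-sequence bookkeeping rather than re-deriving the stratification. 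Everything downstream — parity vanishing of differentials, splitting of the exact sequences, freeness — is then routine.
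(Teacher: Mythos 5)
Your overall architecture (contract $X$ via a cocharacter that is negative on the nonzero weights of $H^0(X,\cO)$, decompose by Bialynicki--Birula, and deduce freeness of $H_T^*(X)$ from the resulting filtration) is essentially the algebraic avatar of the argument the paper actually uses, which is Kirwan-style Morse theory for a proper moment map bounded from below after embedding $X$ equivariantly into $V\times\PP^n$. But there is a genuine gap at the step where you make the long exact sequences of the filtration split: you justify the vanishing of the connecting homomorphisms ``by parity'', claiming the graded pieces $H_T^{*-2r_\alpha}(F_\alpha)$ are concentrated in even degree. The proposition is stated for an arbitrary smooth $T$-variety satisfying (\ref{semiproj})--(\ref{weight_cone}), not just for toric ones, and in this generality the $T$-fixed components $F_\alpha$ are smooth projective but need not have even cohomology; moreover $T$ acts trivially on $F_\alpha$, so you cannot ``further stratify'' it, and the ``standard fact'' you fall back on only says $H_T^*(F_\alpha)=H^*(F_\alpha)\otimes H_T^*(\pt)$, which gives freeness of each graded piece but no parity and hence no vanishing of differentials. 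A concrete instance: $X=A\times\C^g$ (the cotangent bundle of an abelian variety $A$) with $T=\Cstar$ scaling the fibers satisfies both hypotheses, yet its fixed locus $A$ has plenty of odd cohomology, so any argument for Proposition \ref{prop:equiv_formal} that rests on evenness of $H^*(X^T)$ cannot be correct in the stated generality.

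The missing ingredient is the equivariant perfection of the stratification, which is not a parity statement: for each stratum the composition of the Gysin map with restriction to $F_\alpha$ is multiplication by the $T$-equivariant Euler class of the negative part of $N_{F_\alpha/X}$, and since all $T$-weights on this bundle are nonzero (as $F_\alpha\subset X^T$), this Euler class is a non-zero-divisor in $H^*(F_\alpha)\otimes H_T^*(\pt)$; injectivity of these maps forces the long exact sequences to break into split short exact sequences of free modules, and freeness plus surjectivity of $H_T^*(X)\to H^*(X)$ follow as you say. This Atiyah--Bott non-zero-divisor argument is precisely what the paper imports wholesale by quoting Kirwan's result \cite{Kirwan:coh_quotient} that the moment map $\mu|_X$ is an equivariantly perfect Bott--Morse function (the properness and boundedness below of $\mu$, obtained from the embedding into $V\times\PP^n$, playing the role of your semi-projectivity/limit-existence discussion). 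With the parity claim replaced by this Euler-class argument, your proof closes and is a legitimate algebraic variant of the paper's; as written, the key degeneration step is unjustified.
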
 
\begin{proof} 
We use the argument of Kirwan 
\cite[Proposition 5.8]{Kirwan:coh_quotient} 
(see also \cite[\S 5.1]{Nakajima:lectures}). 
Choose a one-parameter subgroup $k \colon \Cstar \to T$ 
such that $k$ is negative on every non-zero weight of 
$H^0(X,\cO)$. This defines a $\Cstar$-action on $X$. 
Let $L\to X$ be a very ample line bundle. 
The $\Cstar$-action on $X$ lifts to a $\Cstar$-linearization 
on $L$, possibly after replacing $L$ with its power $L^{\otimes i}$ 
\cite[Corollary 7.2]{Dolgachev:invariant}. 
Then $L$ defines a $\Cstar$-equivariant closed embedding 
$X \hookrightarrow X_0 \times \PP^n$, where $\PP^n$ 
is equipped with a linear $\Cstar$-action. 
By assumption, we can embed the affine variety 
$X_0 = \Spec (H^0(X,\cO))$ equivariantly 
into a $\Cstar$-representation $V$ which 
has only positive\footnote{We use the (usual) convention that 
$t\in \Cstar$ acts on \emph{functions} by 
$f(x) \mapsto f(t^{-1} x)$.} weights. 
Thus we have a $\Cstar$-equivariant closed embedding 
$X \hookrightarrow V \times \PP^n$. 
The associated $S^1$-action on $V\times \PP^n$ admits, 
with respect to the standard K\"{a}hler metric, 
a moment map $\mu$ which is proper and bounded from below. 
These properties allow us to use Morse theory for the moment map 
$\mu|_X$. The argument in 
\cite{Kirwan:coh_quotient, Nakajima:lectures} shows that  
$\mu|_X$ is a perfect Bott-Morse function and 
$X$ is equivariantly  formal. 
\end{proof} 

\subsection{Gromov-Witten Invariants} 
For a second homology class $d\in H_2(X,\Z)$ and a non-negative 
integer $n\ge 0$, we denote by $X_{0,n,d}$ the moduli 
stack of genus-zero stable maps to $X$ of degree $d$ 
with $n$ marked points. The $T$-action on $X$ induces 
a $T$-action on $X_{0,n,d}$. 
It has a virtual fundamental 
class $[X_{0,n,d}]_{\rm vir}\in H_*(X_{0,n,d},\Q)$ 
of dimension $D = \dim X + n -3 + c_1(X) \cdot d$. 
For equivariant cohomology classes $\alpha_1,\dots,\alpha_n 
\in H^*_T(X,\Q)$ and non-negative integers $k_1,\dots,k_n$, 
the genus-zero $T$-equivariant Gromov-Witten invariant is defined by 
\[
\corr{\alpha_1 \psi^{k_1},\dots, \alpha_n \psi^{k_n}}_{0,n,d}^{X,T} 
= \int_{[X_{0,n,d}]_{\rm vir}} \prod_{i=1}^n \ev_i^*(\alpha_i) \psi_i^{k_i}. 
\]
Here $\ev_i \colon X_{0,n,d} \to X$ is the evaluation map at the 
$i$th marked point and $\psi_i$ denotes the equivariant first Chern class 
of the $i$th universal cotangent line bundle $L_i$ over $X_{0,n,d}$. 
When the moduli space $X_{0,n,d}$ is not compact, the right-hand side 
is defined via the Atiyah-Bott localization formula 
\cite{Atiyah-Bott, Graber-Pandharipande} and belongs to 
the fraction field $\Frac(H_T^*(\pt))$ of $H_T^*(\pt)$. 

\subsection{Quantum Cohomology} 
Let $\Eff(X) \subset H_2(X,\Z)$ denote the semigroup of 
homology classes of effective curves. 
We write $Q$ for the Novikov variable and define 
$M[\![Q]\!]$ to be the space of formal power series: 
\[
M[\![Q]\!] = \left\{ 
\textstyle\sum_{d\in \Eff(X)} a_d Q^d : a_d \in M\right\} 
\]
with coefficients in a module $M$. 
When $M$ is a ring, $M[\![Q]\!]$ is also a ring.  
Let $(\cdot,\cdot)$ denote the $T$-equivariant Poincar\'{e} pairing 
on $H_T^*(X)$: 
\[
(\alpha,\beta) = \int_X \alpha \cup \beta.  
\]
If $X$ is not compact, we define the right-hand side via 
the localization formula. Therefore 
$(\cdot,\cdot)$ takes values in $\Frac(H_T^*(\pt))$ in general. 
Let $\{\phi_i\}_{i=0}^N$ be a basis of $H^*_T(X)$ over $H_T^*(\pt)$. 
We write $\{\tau^i\}_{i=0}^N$ for the dual co-ordinates on $H^*_T(X)$ 
and $\tau = \sum_{i=0}^N \tau^i\phi_i$ for a general point on $H^*_T(X)$. 
The (big) quantum product $\star$ is defined by the formula 
\[
(\phi_i \star \phi_j,\phi_k) = 
\sum_{d\in \Eff(X)} \sum_{n=0}^\infty \frac{Q^d}{n!}
\corr{\phi_i,\phi_j,\phi_k,\tau,\dots,\tau}_{0,n+3,d}^{X,T}. 
\] 
We note that the quantum product $\phi_i\star\phi_j$ 
is defined without localization: 
\[
\phi_i \star \phi_j \in H_T^*(X)[\![Q]\!][\![\tau^0,\dots,\tau^N]\!]. 
\]
In fact, $\phi_i\star \phi_j$ can be written as the push-forward 
\begin{equation} 
\label{eq:qprod_another}
\sum_{d\in \Eff(X)} \sum_{n=0}^\infty 
\frac{Q^d}{n!} \PD \ev_{3*} 
\left ( \ev_1^*(\phi_i) \ev_2^*(\phi_j) \prod_{l=4}^{n+3} 
\ev_l^*(\tau) \cap [X_{0,n+3,d}]_{\rm vir}\right) 
\end{equation} 
along the \emph{proper} evaluation map $\ev_{3}$, and 
hence the localization is not necessary. 
The properness of $\ev_3$ follows from the assumption that 
$X$ is semi-projective. 

\subsection{Quantum Connection and Fundamental Solution} 
The quantum connection is the operator 
\[
\nabla_i \colon H_T^*(X)[z][\![Q]\!][\![\tau^0,\dots,\tau^N]\!] 
\to z^{-1} H_T^*(X)[z][\![Q]\!][\![\tau^0,\dots,\tau^N]\!] 
\]
defined by 
\[
\nabla_i = \parfrac{}{\tau^i} + \frac{1}{z} (\phi_i \star). 
\]
The quantum connection has a parameter $z$: we identify 
it with the equivariant parameter for an additional $\Cstar$-action. 
Set $\hT = T\times \Cstar$ and consider the $\hT$-action 
on $X$ induced by the projection $\hT \to T$. 
Then we have $H^*_\hT (X) \cong H^*_T(X) [z]$. 
The quantum connection is known to be flat, and admits a 
fundamental solution: 
\[
M(\tau) \colon H_\hT^*(X)[\![Q]\!][\![\tau^0,\dots,\tau^N]\!] \to 
H_\hT^*(X)_{\loc}[\![Q]\!][\![\tau^0,\dots,\tau^N]\!] 
\]
satisfying the quantum differential equation: 
\[
z \parfrac{}{\tau^i} M(\tau) = M(\tau) (\phi_i\star)
\]
or equivalently $(\partial/\partial \tau^i)\circ M(\tau) 
= M(\tau) \circ \nabla_i$,  
where $H_\hT^*(X)_{\loc} := H_\hT^*(X)\otimes_{H_\hT^*(\pt)} 
\Frac(H_\hT^*(\pt))$ is the localized equivariant cohomology. 
The following proposition is well-known, see 
\cite[\S 1]{Givental:elliptic}, 
\cite[Proposition 2]{Pandharipande:afterGivental}. 
\begin{proposition} 
\label{prop:fundsol} 
A fundamental solution is given by 
\[
(M(\tau) \phi_i,\phi_j) = (\phi_i, \phi_j) +  
\sum_{\substack{d\in \Eff(X), n\ge 0 \\ (d,n) \neq (0,0)}}
\frac{Q^d}{n!} 
\corr{\phi_i,\tau,\dots,\tau,\frac{\phi_j}{z-\psi}}_{0,n+2,d}^{X,T}. 
\]
\end{proposition}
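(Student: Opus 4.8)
The plan is to verify the formula directly by expanding the quantum differential equation $z\,\partial_{\tau^i} M(\tau) = M(\tau)(\phi_i\star)$ into Gromov--Witten invariants and matching coefficients. First I would record the string and dilaton-type relations that control the $\psi$-classes and the $1/(z-\psi)$ insertion: expanding $1/(z-\psi) = \sum_{k\ge 0} \psi^k/z^{k+1}$, the claimed series for $(M(\tau)\phi_i,\phi_j)$ becomes a generating function in $1/z$ whose coefficients are descendant invariants $\corr{\phi_i,\tau,\dots,\tau,\phi_j\psi^k}_{0,n+2,d}^{X,T}$. Differentiating in $\tau^i$ simply inserts one more $\tau$-free marked point carrying $\phi_i$, with the $1/n!$ bookkeeping working out because $\partial_{\tau^i}$ acting on $\tau^{\otimes n}/n!$ produces $\phi_i\otimes \tau^{\otimes(n-1)}/(n-1)!$.

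Next I would compute the right-hand side $M(\tau)(\phi_i\star)$. Using the definition of the quantum product, $(\phi_i\star\phi_j,\phi_k)$ is itself a sum of three-point-plus-$\tau$'s invariants, and composing with $M(\tau)$ glues a rational-tail factor $\phi_a\otimes(\text{dual }\phi^a)/(z-\psi)$ onto the $\phi_j$-leg. The key identity is the \emph{topological recursion relation} (TRR) in genus zero: a descendant invariant with a $\psi^{k}$ at one marked point equals, after using $\psi = \psi' + (\text{boundary divisor})$, a sum over splittings of the curve into two components, one of which carries the lower power $\psi^{k-1}$ and the other carries exactly three special points. Applying TRR repeatedly to the left-hand side's $\psi^k$ insertion peels off precisely the quantum-product factors appearing on the right-hand side, and the $z$-bookkeeping matches because each application of TRR lowers the power of $\psi$ by one, i.e.\ shifts one power of $z$. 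One also needs the fact that $M(0)=\id$ on the classical part and the $z\to\infty$ normalization $M(\tau) = \id + O(1/z)$, which is visible from the stated formula since the sum starts at order $1/z$; together with flatness this pins down the solution uniquely.

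In the non-compact (semi-projective) setting the only extra care is that the pairing and the descendant integrals are a priori valued in $\Frac(H_T^*(\pt))$ via localization; but this does not affect the formal manipulations, since TRR and the string/dilaton equations hold for the virtual classes regardless of properness, and the localized invariants still satisfy the same combinatorial identities. I would also note that the equivalence ``$z\partial_{\tau^i}M = M(\phi_i\star)$'' $\iff$ ``$\partial_{\tau^i}\circ M = M\circ\nabla_i$'' is just the definition of $\nabla_i = \partial_{\tau^i} + z^{-1}(\phi_i\star)$, so once the former is established the latter is automatic.

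The main obstacle is organizing the TRR bookkeeping cleanly: one must show that iterating TRR on the single insertion $\phi_j/(z-\psi)$ generates \emph{exactly} the expansion of $M(\tau)(\phi_i\star)$ with the correct combinatorial coefficients, keeping track of how the marked points carrying $\tau$ distribute over the two components at each splitting and how the Novikov degrees add. This is the heart of Givental's and Pandharipande's arguments; everything else (string/dilaton equations for the $1/n!$ factors, the $z\to\infty$ normalization, flatness as a consequence) is routine. Since the statement is explicitly flagged as well-known with references to \cite{Givental:elliptic} and \cite{Pandharipande:afterGivental}, I would present the proof as a guided reduction to TRR rather than reproving TRR itself.
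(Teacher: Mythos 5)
The paper gives no proof of this proposition at all—it simply cites Givental and Pandharipande—and your sketch (expand $1/(z-\psi)$, differentiate in $\tau$, and match against $M(\tau)(\phi_i\star)$ via the genus-zero topological recursion relation, with string/dilaton handling the normalization and the localized equivariant invariants satisfying the same relations) is exactly the standard argument from those references, so it is essentially the same approach. One small correction to the bookkeeping: after writing $\frac{z}{z-\psi}=1+\frac{\psi}{z-\psi}$, a \emph{single} application of TRR per $\tau$-derivative, with the three distinguished points being the $\psi$-carrying point and the two primary insertions, produces exactly one quantum-product factor, and this factor glues onto the input leg $\phi_i$ (the argument of $M(\tau)$), not onto the $\frac{\phi_j}{z-\psi}$-leg, so no iteration of TRR is needed.
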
 
\begin{remark} 
Expanding $1/(z-\psi) = \sum_{n=0}^\infty \psi^n/z^{n+1}$, we find 
that $M(\tau)\phi_i$ takes values in $H_T^*(X)[\![z^{-1}]\!]$. 
By the localization calculation, it also follows that $M(\tau) \phi_i$ takes 
values in $H_{\hT}^*(X)_{\loc}$. 
The localized $\hT$-equivariant cohomology $H_\hT^*(X)_{\loc}$ 
is also called the \emph{Givental space} \cite{Givental:symplectic}. 
\end{remark} 

\section{Shift Operator} 
\label{sec:shift} 
The shift operator for equivariant quantum cohomology has been  
introduced by Okounkov-Pandharipande \cite{Okounkov-Pandharipande:Hilbert}, 
Braverman-Maulik-Okounkov \cite{BMO:Springer} and 
Maulik-Okounkov \cite{Maulik-Okounkov:qcoh_qgroup}. 
We discuss its (straightforward) extension to the big quantum cohomology. 

\subsection{Twisted Homomorphism} 
We write $\hT = T \times \Cstar$. 
For a group homomorphism $k\colon \Cstar \to T$, 
we consider the $\hT$-action $\rho_k$ on $X$ 
defined by 
\[
\rho_k(t,u) x = t u^k \cdot x 
\]
where $(t,u) \in \hT$, $x\in X$ and 
$u^k\in T$ denotes the image of $u\in \Cstar$ under $k$. 
Let $\lambda \in \Lie(T)$ denote the equivariant 
parameter for $T$ and let $z\in \Lie(\Cstar)$ denote 
the equivariant parameter for $\Cstar$. 
The identity map $\id \colon (X,\rho_0) \to (X,\rho_k)$ 
is equivariant with respect to the group automorphism 
\[
\phi_k \colon \hT \to \hT, \qquad 
\phi_k(t,u) = (t u^{-k}, u).
\]
Therefore the identity map induces an isomorphism 
\[
\Phi_{k} \colon 
H^*_{\hT, \rho_0}(X) \cong H^*_{\hT,\rho_k}(X) 
\]
such that 
\begin{equation} 
\label{eq:twisted_hom} 
\Phi_k(f(\lambda,z) \alpha) = f(\lambda+kz, z) \Phi_k(\alpha) 
\end{equation} 
where $\alpha \in H_{\hT,\rho_0}^*(X)$ and 
$f(\lambda,z) \in H_{\hT}^*(\pt)$ is a polynomial 
function on $\Lie(\hT)$. 
Referring to the property \eqref{eq:twisted_hom}, 
we say that $\Phi_k$ is a \emph{$k$-twisted} homomorphism. 

\begin{notation}
We write $H_{\hT,\rho}^*(X)$ for the $\hT$-equivariant cohomology 
of $X$ with respect to the $\hT$-action $\rho$ on $X$. When $\rho$ is 
omitted, $H_{\hT}^*(X)$ means $H_{\hT,\rho_0}^*(X)$. 
\end{notation} 

\subsection{Bundle Associated to a $\Cstar$-Subgroup} 

\begin{definition}[associated bundle] 
Let $k \colon \C^\times \to T$ be a group homomorphism. 
Consider the $\C^\times$-action on $X \times (\C^2\setminus \{0\})$ 
given by $s \cdot (x,(v_1,v_2)) = (s^k \cdot x, (s^{-1} v_1, s^{-1} v_2))$. 
Let $E_k$ denote the quotient space: 
\[
E_k := X \times (\C^2 \setminus \{0\}) / \Cstar.  
\]
We have a natural projection $\pi \colon E_k \to \PP^1$ given by 
$\pi ([x,(v_1,v_2)] )= [v_1,v_2]$ and 
$E_k$ is a fiber bundle over $\PP^1$ with fiber $X$. 
We consider the $\hT$-action 
on $E_k$ given by $
(t,u) \cdot [x,(v_1,v_2)] = [t \cdot x, (v_1, u v_2)]$. 
Let $X_0$ denote the fiber of $E_k \to \PP^1$ 
at $[1,0]$ and let $X_\infty$ denote the fiber at $[0,1]$. 
Note that we have 
\[
X_0 \cong (X,\rho_0) \qquad \text{and} \qquad 
X_\infty \cong (X,\rho_k) 
\]
as $\hT$-spaces. 
\end{definition} 

\begin{definition} 
\label{def:seminegative} 
A group homomorphism $k \colon \Cstar \to T$ is said to be 
\emph{semi-negative} if $k$ is non-positive on each $T$-weight 
of $H^0(X,\cO)$. We say that $k$ is \emph{negative} if $k$ is negative 
on each non-zero $T$-weight of $H^0(X,\cO)$. 
\end{definition} 

\begin{remark}
When $X$ is complete, every $\Cstar$-subgroup is negative. 
\end{remark} 

Suppose that $k\colon \Cstar \to T$ is semi-negative  
and consider the $\Cstar$-action on $X$ induced by $k$. 
Let $L$ be a very ample line bundle on $X$. As discussed 
in the proof of Proposition \ref{prop:equiv_formal}, 
we may assume that $L$ admits a $\Cstar$-linearization. 
By tensoring $L$ with a $\Cstar$-character, 
we may assume that all the $\Cstar$-weights on 
$H^0(X,L^{\otimes n})$ are negative for $n>0$.  
Let $p\colon X \times \C^2 \to X$ be the natural 
projection. 
Then $p^*L$ is a $\Cstar$-equivariant line 
bundle on $X \times \C^2$, where $\Cstar$ acts 
on the base by $s \cdot (x, (v_1,v_2)) = 
(s^k\cdot x, (s^{-1} v_1, s^{-1} v_2))$.  
We can see that 
\[
H^0(X\times \C^2, (p^*L)^{\otimes n}) = 
\bigoplus_{i=0}^\infty H^0(X,L^{\otimes n})^{(-i)} 
\otimes \C[v_1,v_2]^{(i)} 
\]
where the superscript $(l)$ means the component of 
$\Cstar_s$-weight $l$. 
The unstable locus for the $\Cstar$-action on $(X\times \C^2,p^*L)$, 
in the sense of Geometric Invariant Theory (GIT), 
is $X\times \{0\}$ and therefore we find that $E_k$ is the GIT quotient of 
$X\times \C^2$, i.e.~$E_k = \Proj(\bigoplus_{n=0}^\infty 
H^0(X\times \C^2, (p^*L)^{\otimes n}))$. 
This proves: 

\begin{lemma} 
\label{lem:E_semiproj} 
If $k$ is semi-negative, $E_k$ is semi-projective. 
\end{lemma}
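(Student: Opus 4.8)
The plan is to read off semi-projectivity of $E_k$ from the GIT description obtained in the discussion above. That discussion exhibits a $\Cstar_s$-linearized very ample line bundle $p^*L$ on $X\times\C^2$ whose unstable locus is $X\times\{0\}$ and whose (geometric) quotient is $E_k$, so that
\[
E_k \;\cong\; \Proj S, \qquad
S \;:=\; \bigoplus_{n\ge 0} H^0\bigl(X\times\C^2,(p^*L)^{\otimes n}\bigr)^{\Cstar_s}.
\]
It then suffices to show that $S_0 = H^0(E_k,\cO_{E_k})$ and that the structure morphism $\Proj S\to\Spec S_0$ is projective.

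For the first point: the $\Cstar_s$-action on $X\times(\C^2\setminus\{0\})$ is free (the quotient being the fiber bundle $E_k\to\PP^1$), so $H^0(E_k,\cO_{E_k})=H^0\bigl(X\times(\C^2\setminus\{0\}),\cO\bigr)^{\Cstar_s}$; and since $X$ is smooth, $X\times\C^2$ is normal and $X\times\{0\}$ has codimension two, whence $H^0\bigl(X\times(\C^2\setminus\{0\}),\cO\bigr)=H^0(X\times\C^2,\cO)$ by Hartogs. Taking $\Cstar_s$-invariants gives $H^0(E_k,\cO_{E_k})=S_0$. Moreover $S_0$ is finitely generated over $\C$: by the Künneth formula $H^0(X\times\C^2,\cO)=H^0(X,\cO)\otimes\C[v_1,v_2]$ is a finitely generated $\C$-algebra (as $X_0=\Spec H^0(X,\cO)$ is an affine variety of finite type), and its ring of invariants under the reductive group $\Cstar_s$ is again finitely generated by Hilbert's theorem.

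For the second point: relative very ampleness of $L$ for the projective morphism $X\to X_0$ makes $\bigoplus_n H^0(X,L^{\otimes n})$ a finitely generated $H^0(X_0,\cO)$-algebra; tensoring with $\C[v_1,v_2]$ and passing to $\Cstar_s$-invariants, which preserves finite generation over the invariant subring by reductivity, shows that $S$ is finitely generated as an $S_0$-algebra. For any $\Z_{\ge 0}$-graded ring $S$ finitely generated over its degree-zero subring $S_0$ with $S_0$ of finite type over $\C$, the structure morphism $\Proj S\to\Spec S_0$ is projective. Combining with the previous paragraph, $E_k=\Proj S\to\Spec S_0=\Spec H^0(E_k,\cO_{E_k})$ is projective, i.e.\ $E_k$ is semi-projective.

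The step I expect to require the most care is the identification $S_0=H^0(E_k,\cO_{E_k})$, i.e.\ knowing that $E_k$ is genuinely the GIT quotient on the nose rather than an open piece of a larger quotient; this is precisely where semi-negativity of $k$ enters, as it forces the unstable locus to be the small set $X\times\{0\}$. Granting the GIT picture established above, the remaining ingredients — Hartogs, Hilbert finiteness, and projectivity of $\Proj$ over the spectrum of its degree-zero part — are all standard.
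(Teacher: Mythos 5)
Correct, and essentially the paper's own route: the paper's proof of this lemma is exactly the preceding GIT discussion, which identifies $E_k$ with $\Proj$ of the $\Cstar$-invariant section ring of $p^*L$ (the invariance you include is indeed what is intended there) and then simply declares semi-projectivity. Your added verifications --- $S_0=H^0(E_k,\cO_{E_k})$ via the principal-bundle description plus Hartogs, finite generation of $S_0$ and of $S$ over $S_0$ by reductivity, and projectivity of $\Proj S\to\Spec S_0$ --- are precisely the routine details the paper leaves implicit, and they are all sound.
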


Let $k\colon \Cstar \to T$ be a semi-negative subgroup 
and consider the $\Cstar$-action on $X$ induced by $k$. 
A $\Cstar$-fixed point $x\in X$ defines a section of $E_k 
\to \PP^1$: 
\begin{equation} 
\label{eq:fixed_point_section} 
\sigma_x = ( \{ x\} \times \PP^1 ) \subset E_k.
\end{equation} 
We now define a minimal section among all such sections 
associated to fixed points. 
Using the argument in the proof of Proposition 
\ref{prop:equiv_formal}, we obtain a $\Cstar$-equivariant closed 
embedding $X\hookrightarrow \PP^n \times \C^l$ where 
$\C^l$ is a $\Cstar$-representation with only non-negative weights. 
In particular, for every point $x\in X$, the limit $\lim_{s\to 0} s^k \cdot x$ 
exists. This implies the existence of the Bialynicki-Birula  
decomposition \cite[Theorem 4.1]{Bialynicki-Birula} for $X$: 
if $X^{\Cstar}= \bigsqcup_i F_i$ is the decomposition 
of the $\Cstar$-fixed locus $X^{\Cstar}$ into connected components, 
we have the induced decomposition of $X$ 
\[
X = \bigsqcup_i U_i, \qquad 
U_i = \left\{x\in X: \lim_{s \to 0} s^k \cdot x \in F_i\right\} 
\]
into locally closed smooth subvarieties $U_i$. 
In particular there exists a unique $\Cstar$-fixed 
component $F_{\min}\subset X$ such that all the $\Cstar$-weights 
on the normal bundle to $F_{\min}$ are positive. The moment map 
$\mu$ for the associated $S^1$-action attains a global minimum on $F_{\min}$. 
We call the class of a section $\sigma_{\min}$ of $E_k$ 
associated to a point in $F_{\min}$ 
the \emph{minimal section class}. 
We write 
\begin{align*} 
H_2^\sec(E_k,\Z) 
& = \left\{ d\in H_2(E_k,\Z) : \pi_*(d) = [\PP^1] \right\}, \\ 
\Eff(E_k)^\sec & = \Eff(E_k) \cap H_2^\sec(E_k,\Z).
\end{align*} 
\begin{lemma}
If $k$ is semi-negative, we have 
$\Eff(E_k)^\sec = \sigma_{\min} + \Eff(X)$. 
\end{lemma}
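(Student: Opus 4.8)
The plan is to prove the two inclusions $\Eff(E_k)^\sec \supseteq \sigma_{\min} + \Eff(X)$ and $\Eff(E_k)^\sec \subseteq \sigma_{\min} + \Eff(X)$ separately, using the structure of $E_k$ as a fiber bundle over $\PP^1$ together with the Bialynicki-Birula decomposition established just above. First I would observe that since $X = X_0$ sits inside $E_k$ as the fiber over $[1,0]$, any effective curve class $d' \in \Eff(X)$ gives an effective class in $E_k$ via the inclusion $X_0 \hookrightarrow E_k$, and $\sigma_{\min}$ is by construction the class of an honest (effective) section curve; adding these, $\sigma_{\min} + d'$ is represented by a (possibly reducible) effective curve whose $\pi$-pushforward is $[\PP^1]$, which gives the inclusion $\supseteq$.

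For the reverse inclusion, the idea is to take a class $d \in \Eff(E_k)^\sec$, represent it by an effective curve $C$, and decompose $C$ into its horizontal and vertical parts with respect to $\pi \colon E_k \to \PP^1$. Since $\pi_*(d) = [\PP^1]$, exactly one irreducible component $C_0$ of $C$ maps onto $\PP^1$ (with degree one), and the remaining components are contained in fibers of $\pi$. The horizontal component $C_0$ is a section of $E_k$ over $\PP^1$, and the vertical components contribute classes coming from fibers, i.e. from $\Eff(X)$ (all fibers of $E_k \to \PP^1$ away from $0,\infty$ are isomorphic to $X$ as the bundle is the Borel-type construction, and the special fibers $X_0, X_\infty$ are also copies of $X$). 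So it remains to show that the class of any section of $E_k$ lies in $\sigma_{\min} + \Eff(X)$.

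To control section classes, I would use the $\Cstar$-action on $E_k$ and the Bialynicki-Birula decomposition of $X$ under $k$. Let $C_0$ be an irreducible section; consider its intersection with the fiber $X_0 \cong (X,\rho_0)$, giving a point $x \in X$, and take the limit $x_0 = \lim_{s\to 0} s^k\cdot x \in F_i$ for the appropriate BB stratum. The key geometric fact is that the section $C_0$ is rationally equivalent (as a cycle, modulo effective vertical curves) to $\sigma_x$, and then $\sigma_x - \sigma_{\min}$ or rather $\sigma_x$ itself can be compared to $\sigma_{x_0}$ via the $\Cstar$-flow: degenerating $C_0$ under the $\Cstar$-action sends it to a limit cycle consisting of the fixed section $\sigma_{x_0}$ plus vertical components (the ``broken'' pieces lying in fibers over $0$ and $\infty$), all of which are effective, so $[C_0] \in \sigma_{x_0} + \Eff(X)$ in $H_2(E_k,\Z)$. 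Finally, for any fixed component $F_i$, one shows $\sigma_{F_i} \in \sigma_{\min} + \Eff(X)$: since $F_{\min}$ is the source of the BB decomposition, there is a chain of invariant curves (the closures of one-dimensional BB strata, or more directly the sections coming from points whose limits connect $F_i$ to $F_{\min}$) realizing $\sigma_{F_i} - \sigma_{\min}$ as an effective class supported in a fiber; concretely, $\sigma_{F_i}$ and $\sigma_{\min}$ both restrict to $[\PP^1]$ under $\pi$, so their difference is a vertical class, and one checks it is effective by exhibiting it inside a single fiber $X$ using that $F_{\min}$ lies in the closure of the flow from any point.

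The main obstacle I expect is the last step: proving effectivity (not just that the class is vertical) of $\sigma_{F_i} - \sigma_{\min}$ in $H_2(X,\Z)$. The difference of two section classes is automatically a fiber class, but \emph{a priori} it need not be effective in $X$; one needs genuine input from the geometry — either the explicit gradient flow of the moment map $\mu$ (whose minimum is $F_{\min}$), giving invariant $\PP^1$'s connecting strata and hence an effective chain, or a deformation-to-the-normal-cone / BB-degeneration argument showing that the minimal section is a specialization of $\sigma_{F_i}$ within $E_k$. I would carry this out using the $\Cstar$-equivariant embedding $X \hookrightarrow \PP^n\times \C^l$ from the proof of Proposition \ref{prop:equiv_formal}, where the corresponding statement for $\PP^n$ (whose only fiber class generator is the line, and where the minimal section is the hyperplane-minimizing one) is elementary, and then pull back. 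Everything else — the horizontal/vertical decomposition, and the $\supseteq$ inclusion — is routine.
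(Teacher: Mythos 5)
Your overall skeleton (reduce to section classes, degenerate to invariant sections, then compare invariant sections to $\sigma_{\min}$ along chains of orbits) is parallel to the paper's argument, but the two steps that carry the real content are left unresolved, and the fix you propose for one of them would fail. First, the step you yourself flag as the main obstacle --- effectivity of $\sigma_{F_i}-\sigma_{\min}$ --- cannot be obtained by pushing into the embedding $X\hookrightarrow \PP^n\times\C^l$: effectivity does not pull back along such an embedding (a class of $X$ whose image in $H_2(\PP^n\times\C^l)\cong H_2(\PP^n)\cong\Z$ is effective need not be effective in $X$, and since that group has rank one the comparison carries essentially no information about $\Eff(X)$). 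The paper's resolution is a concrete local computation: if $x=\lim_{s\to\infty}s^k\cdot p$ and $y=\lim_{s\to 0}s^k\cdot p$ are fixed points joined by a $k(\Cstar)$-orbit with closure $C\cong\PP^1$, then $\sigma_x$ and $\sigma_y$ lie in the Hirzebruch surface $C\times(\C^2\setminus\{0\})/\Cstar\subset E_k$, where one checks $\sigma_x=\sigma_y+a[C]$ with $a>0$; combined with the Bialynicki--Birula decomposition, which connects every fixed point to $F_{\min}$ by a chain of such orbits, this gives $\sigma_x\in\sigma_{\min}+\Eff(X)$. Some such explicit statement is needed; ``gradient flow of the moment map'' alone does not produce the algebraic curves $C$ and the positivity of $a$.

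Second, your degeneration of the horizontal component $C_0$ to an invariant section plus vertical pieces tacitly assumes that the flat limit under the $\Cstar$-action exists in $E_k$ and represents the same homology class. Since $X$ (hence $E_k$) is not assumed compact, components can escape to infinity and the class can drop; this is exactly where the hypotheses enter, and your proposal never uses semi-negativity of $k$ beyond the existence of the BB decomposition. The paper deals with this by choosing a \emph{negative} one-parameter subgroup $l\subset T$ (available by hypothesis (2) of \S\ref{subsec:hypotheses}), noting that $E_k$ is semi-projective (Lemma \ref{lem:E_semiproj}) with all non-zero $l$-weights on $H^0(E_k,\cO)$ negative, so every curve can be flowed, within its homology class, into the \emph{compact} fiber $K$ of $E_k\to E_{k,0}=\Spec H^0(E_k,\cO)$ over the unique fixed point, and only then degenerated (inside the compact $K$) to a $\hT$-invariant stable curve, which is automatically a union of some $\sigma_x$, $x\in X^T$, with curves in $X_0\sqcup X_\infty$. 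Without an argument of this kind (properness of the degenerating family), your ``limit cycle'' step is not justified in the non-compact setting. The easy inclusion $\supseteq$ and the horizontal/vertical bookkeeping in your first two paragraphs are fine.
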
 
\begin{proof} 
The compact case was discussed in \cite[Lemma 2.2]{Gonzalez-Iritani:Selecta}. 
Take a negative one-parameter subgroup $l\colon \Cstar \to T$ and 
consider the $\Cstar$-action on $E_k$ induced by 
$\Cstar \xrightarrow{l} T \times \{1\} \subset \hT$. 
Observe that all non-zero $\Cstar$-weights on $H^0(E_k,\cO)$ are negative. 
This means that $E_{k,0} :=\Spec H^0(E_k,\cO)$ has 
a unique $\Cstar$-fixed point $0$ and $\lim_{s\to 0} s\cdot x = 0$ 
for all $x\in E_{k,0}$. Therefore every curve can be 
deformed, via the $\Cstar$-action, to a stable curve in the fiber $K$ 
of $E_k \to E_{k,0}$ at $0\in E_{k,0}$ in the same homology class. 
Since $\hT$-action on $E_k$ preserves $K$ and $K$ is compact, 
we may further deform a curve in $K$ to a 
$\hT$-invariant stable curve. 
A $\hT$-invariant stable curve in $E_k$ is a union of 
a section class $\sigma_x$ associated to a $T$-fixed point $x\in X$ 
and effective curves in $X_0 \sqcup X_\infty$. 
Suppose that two different fixed points $x, y \in X^T$ 
are connected by a $k(\Cstar)$-orbit, i.e.~$\exists p\in X$, 
$x = \lim_{s \to \infty} s^k \cdot p$ and 
$y = \lim_{s \to 0} s^k \cdot p$. 
The closure $C= \overline{k(\Cstar)\cdot p}$ is 
isomorphic to $\PP^1$ and $\sigma_x,\sigma_y$ are contained 
in a Hirzebruch surface 
\[
C \times (\C^2\setminus \{0\})/\Cstar 
\subset E_k. 
\] 
Then one finds $\sigma_x = \sigma_y + a [C]$ 
for some $a>0$. 
Using the Bialynicki-Birula decomposition for the 
$k(\Cstar)$-action on $X$, we find that every $T$-fixed point 
is connected to a $T$-fixed point on $F_{\min}$ 
by a chain of $k(\Cstar)$-orbits. The conclusion follows. 
\end{proof}

\begin{lemma} 
\label{lem:coh_E} 
We have an isomorphism 
\begin{align*} 
H_{\hT}^*(E_k) & \cong 
\left\{(\alpha, \beta) \in H_{\hT,\rho_0}^*(X) \oplus 
H_{\hT,\rho_k}^*(X) : \alpha - \Phi_k^{-1}(\beta)\equiv 0 \mod z\right\} 
\end{align*} 
which sends $\tau$ to $(\tau|_{X_0}, \tau|_{X_\infty})$.  
Recall that $z$ is the equivariant 
parameter for $\Cstar$ and we have a canonical isomorphism 
$H_{\hT,\rho_0}^*(X) \cong H_T^*(X) [z]$.   
\end{lemma}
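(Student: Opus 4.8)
\textbf{Proof strategy for Lemma \ref{lem:coh_E}.}
The plan is to compute $H_\hT^*(E_k)$ via the Mayer--Vietoris sequence for the open cover of $\PP^1$ by the complements $\PP^1 \setminus \{[0,1]\}$ and $\PP^1 \setminus \{[1,0]\}$, pulled back to $E_k$. First I would observe that $E_k \to \PP^1$ is a Zariski-locally trivial fiber bundle with fiber $X$, so the preimage $U_0$ of $\PP^1 \setminus \{[0,1]\}$ is $\hT$-equivariantly homotopy equivalent to the fiber $X_0 \cong (X,\rho_0)$, the preimage $U_\infty$ of $\PP^1 \setminus \{[1,0]\}$ is $\hT$-equivariantly homotopy equivalent to $X_\infty \cong (X,\rho_k)$, and the intersection $U_0 \cap U_\infty$ sits over $\C^\times \subset \PP^1$ and is $\hT$-equivariantly homotopy equivalent to a single fiber of $E_k$ over that locus. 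The key point about the $\Cstar$-factor of $\hT$ is that it acts on the base $\PP^1$ by $[v_1,v_2]\mapsto[v_1,uv_2]$ with the two fixed points $[1,0]$ and $[0,1]$; over the $\Cstar$-fixed locus $\{[1,0]\}$ the bundle restricts to $(X,\rho_0)$ and over $\{[0,1]\}$ to $(X,\rho_k)$, while the restriction map to a fiber over a \emph{free} $\Cstar$-orbit in $\C^\times$ kills the equivariant parameter $z$. Concretely, the fiber of $E_k$ over $[1,1]$ is a copy of $X$ on which $\hT$ acts only through the $T$-factor (the $\Cstar$ acts freely on the orbit through $[1,1]$ and so contributes nothing equivariantly), hence its $\hT$-equivariant cohomology is $H_T^*(X)$, i.e.~$H_\hT^*(X)$ with $z$ set to $0$.

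Next I would identify the two restriction maps $H_\hT^*(U_0) \to H_\hT^*(U_0\cap U_\infty)$ and $H_\hT^*(U_\infty) \to H_\hT^*(U_0\cap U_\infty)$ explicitly. Under the identifications $H_\hT^*(U_0)\cong H_{\hT,\rho_0}^*(X)$ and $H_\hT^*(U_\infty)\cong H_{\hT,\rho_k}^*(X)$, the first is the canonical quotient map $H_{\hT,\rho_0}^*(X)\to H_{\hT,\rho_0}^*(X)/z = H_T^*(X)$, and the second is $\Phi_k$ followed by the same quotient — the twisted-homomorphism property \eqref{eq:twisted_hom} is exactly what records the fact that trivializing the bundle over $[1,1]$ versus near $[0,1]$ differs by the automorphism $\phi_k$ of $\hT$. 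So in the Mayer--Vietoris sequence
\[
\cdots \to H_\hT^*(E_k) \to H_{\hT,\rho_0}^*(X)\oplus H_{\hT,\rho_k}^*(X) \xrightarrow{\ \delta\ } H_T^*(X) \to \cdots
\]
the connecting map sends $(\alpha,\beta)$ to $(\alpha \bmod z) - (\Phi_k^{-1}(\beta)\bmod z)$, whose kernel is precisely the claimed fiber product. To finish I must show $\delta$ is surjective and that the relevant connecting homomorphism into $H_\hT^*(E_k)$ vanishes, i.e.~that the sequence splits into short exact pieces. Surjectivity of $\delta$ is clear since either summand already surjects onto $H_T^*(X)$. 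For the vanishing of the boundary map it suffices to exhibit a section: the inclusion of a fixed-point section $\sigma_x$ (equation \eqref{eq:fixed_point_section}) or, more robustly, equivariant formality. By Proposition \ref{prop:equiv_formal} and Lemma \ref{lem:E_semiproj}, $E_k$ is semi-projective and hence equivariantly formal, so $H_\hT^*(E_k)$ is a free $H_\hT^*(\pt)$-module of the expected rank (namely $\operatorname{rank} H^*(X) + \operatorname{rank} H^{*-2}(X)$, matching the rank of the fiber product), which forces the long exact sequence to break up as desired.

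The main obstacle I anticipate is not the homological bookkeeping but getting the \emph{identification of the gluing map} right, i.e.~verifying that the change of equivariant trivialization between the two charts is governed by $\phi_k$ with no sign or direction error; this is where one must be careful about the convention $\rho_k(t,u)x = tu^k\cdot x$ versus the $\Cstar$-action $s\cdot(x,(v_1,v_2)) = (s^k\cdot x,(s^{-1}v_1,s^{-1}v_2))$ defining $E_k$, and about which fiber ($[1,0]$ or $[0,1]$) carries $\rho_0$ and which carries $\rho_k$. A clean way to pin this down is to restrict everything first to a fixed-point section $\sigma_x$, where the bundle becomes $\PP(\cO\oplus\cO(\text{weight}))$ over a point with an explicit $\hT$-action, check the statement there by hand, and then promote it to all of $E_k$ using equivariant formality together with the localization theorem (the restrictions to $X_0$ and $X_\infty$ together with divisibility by $z$ detect classes). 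Once the gluing map is correctly identified, the rest is the standard Mayer--Vietoris computation above, and the isomorphism manifestly sends a class $\tau$ on $E_k$ to the pair $(\tau|_{X_0},\tau|_{X_\infty})$ of its restrictions to the two special fibers.
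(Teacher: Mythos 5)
Your proposal is correct and follows essentially the same route as the paper: the Mayer--Vietoris sequence for the cover $U_0=\pi^{-1}(\PP^1\setminus\{[0,1]\})$, $U_\infty=\pi^{-1}(\PP^1\setminus\{[1,0]\})$, the identifications $H_\hT^*(U_0)\cong H_{\hT,\rho_0}^*(X)$, $H_\hT^*(U_\infty)\cong H_{\hT,\rho_k}^*(X)$, $H_\hT^*(U_0\cap U_\infty)\cong H_T^*(X)$, the difference map $(\alpha,\beta)\mapsto(\alpha-\Phi_k^{-1}\beta)|_{z=0}$, and its surjectivity. Two minor remarks: surjectivity in every degree already kills the connecting maps, so the appeal to equivariant formality of $E_k$ and the rank count are unnecessary, and your phrase ``$\Phi_k$ followed by the quotient'' should read $\Phi_k^{-1}$ (as your displayed formula correctly has it).
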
 
\begin{proof} 
Consider the Mayer-Vietoris exact sequence associated 
to the covering 
$E_k =U_0 \cup U_\infty$ with $U_0=\pi^{-1}(\C)$ and 
$U_\infty =\pi^{-1}(\PP^1\setminus \{0\})$. 
We have $H_{\hT}^*(U_0) \cong H_{\hT,\rho_0}^*(X)$, 
$H_{\hT}^*(U_\infty) 
\cong H_{\hT,\rho_k}^*(X)$ and 
$H_{\hT}^*(U_0\cap U_\infty) \cong H_T(X)$. 
The map $H_{\hT}^*(U_0) \oplus H_\hT^*(U_\infty) 
\to H_\hT^*(U_0\cap U_\infty)$ is surjective and 
is given by $(\alpha,\beta) \mapsto (\alpha - \Phi_k^{-1}\beta)|_{z=0}$. 
\end{proof} 

\begin{notation} 
By Lemma \ref{lem:coh_E}, for $\tau\in H_T^*(X)$, 
there exists $\htau \in H_{\hT}^*(E_k)$ such that 
$\htau|_{X_0} = \tau$ and $\htau|_{X_\infty} = \Phi_k(\tau)$.  
This defines a map $\hat{\phantom{\tau}} \colon 
H_T^*(X) \to H_{\hT}^*(E_k)$. This is not 
$H_T^*(\pt)$-linear. 
\end{notation} 

\subsection{Shift Operator} 
\begin{definition}[shift operator] 
Let $k \colon \Cstar \to T$ be a semi-negative group homomorphism. 
For $\tau \in H_T^*(X)$, 
we define $\tbS_k(\tau) \colon H^*_{\hT,\rho_0}(X)[\![Q]\!] \to 
H^*_{\hT,\rho_k}(X)[\![Q]\!]$ by 
\[
\left( \tbS_k(\tau) \alpha, \beta \right) =
\sum_{\hd\in \Eff(E_k)^\sec} 
\frac{Q^{\hd-\sigma_{\min}}}{n!} 
\corr{\iota_{0*}\alpha,
\iota_{\infty*} \beta, \htau, \dots, \htau}_{0,n+2,\hd}^{E_k,\hT} 
\]
where $(\cdot,\cdot)$ in the left-hand side is the $\hT$-equivariant 
Poincar\'{e} pairing on $H_{\hT,\rho_k}^*(X)$, 
$\alpha \in H_{\hT,\rho_0}^*(X)$, $\beta\in H_{\hT,\rho_k}^*(X)$, 
$\sigma_{\min}$ is the minimal section class for $E_k$, and 
$\iota_0\colon X_0 \to E_k$, $\iota_\infty \colon X_\infty \to E_k$ 
are the natural  inclusions. 
We also define 
\[
\bS_k(\tau) = \Phi_k^{-1} \circ \tbS_k(\tau) \colon 
H_{\hT}^*(X)[\![Q]\!] \to H_\hT^*(X)[\![Q]\!].
\]
Note that $\tbS_k$ is untwisted but 
$\bS_k$ is $(-k)$-twisted (see \eqref{eq:twisted_hom}). 
\end{definition} 

\begin{remark} 
When $k$ is semi-negative, $E_k$ is semi-projective by Lemma \ref{lem:E_semiproj} 
and thus the shift operator $\bS_k$ is defined without localization: 
we may rewrite $\tbS_k$ as the push-forward 
along an evaluation map (see \eqref{eq:qprod_another}). 
When $k$ is not semi-negative, we can still define $\bS_k$ over 
$\Frac(H_T^*(\pt))$ after choosing a suitable section class 
$\sigma_{\min}$. 
\end{remark} 

\begin{remark} 
Since the map $\tau \mapsto \htau$ is not $H_T^*(\pt)$-linear, 
$\bS(\tau)$ cannot be written as formal power series in the 
$H_T^*(\pt)$-valued variables $\tau^0,\dots,\tau^N$. 
For $\alpha_1,\dots,\alpha_l\in H_T^*(X)$ and 
\emph{$\C$-valued} variables $t^1,\dots,t^l$, the shift operator 
$\bS(\tau)$ with $\tau = \sum_{i=1}^l t^i \alpha_i$ is a formal 
power series in $t^1,\dots,t^l$. 
\end{remark} 

\begin{remark}[divisor equation]  
\label{rem:div_shift} 
Suppose that $\tau = h + \tau'$ with $h \in H^2_T(X)$. 
Using the divisor equation, we have: 
\[
\left( \tbS_k(\tau) \alpha, \beta \right ) 
= e^{-h(k)}
\sum_{d \in \Eff(X)}
\frac{Q^d e^{h\cdot d}}
{n!} 
\corr{\iota_{0*} \alpha, \iota_{\infty *} \beta, \htau',
\dots \htau'}
_{0,n+2,\sigma_{\min}+d}^{E_k,\hT} 
\]
where $h(k)$ is the pairing between $k$ and 
the restriction $h|_x\in H^2_T(\pt) \cong \Lie(T)^*$ of 
$h$ to a fixed point $x$ in the minimal fixed component 
$F_{\min}$ (with respect to $k$). 
Note that $\hat{h} \cdot \sigma_{\min} = - h(k)$. 
\end{remark} 

By the localization theorem of equivariant cohomology 
\cite{Atiyah-Bott}, the restriction to the $T$-fixed subspace $X^T$ 
induces an isomorphism 
\[
\iota^* \colon H_\hT^*(X)_{\loc} \overset{\cong}{\longrightarrow} 
H^*_{\hT}(X^T)_{\loc} 
= H^*(X^T) \otimes \Frac(H_\hT^*(\pt)). 
\]
We use this to define the shift operator on the Givental space 
$H_\hT^*(X)_{\loc}$. 

\begin{definition}[shift operator on the Givental space] 
\label{def:shift_Givental}
Let $X^T=\bigsqcup_i F_i$ be the decomposition of $X^T$ 
into connected components. Let $N_{i}$ be the normal bundle 
to $F_i$ in $X$. Let 
$N_{i}= \bigoplus_\alpha N_{i,\alpha}$ 
denote the $T$-eigenbundle decomposition, 
where $T$ acts on $N_{i,\alpha}$ 
by the character $\alpha \in \Hom(T,\Cstar)$. 
Let $\rho_{i,\alpha,j}$, $j=1,\dots,\rank(N_{i,\alpha})$ 
denote the Chern roots of $N_{i,\alpha}$. 
For a semi-negative $k\in \Hom(\Cstar,T)$, we define: 
\[
\Delta_i(k)= Q^{\sigma_i -\sigma_{\min}} 
\prod_{\alpha} \prod_{j=1}^{\rank (N_{i,\alpha}) }
\frac{\prod_{c=-\infty}^0 (\rho_{i,\alpha,j} + \alpha + cz)
}{\prod_{c=-\infty}^{-\alpha \cdot k} (\rho_{i,\alpha,j}+\alpha + cz)} 
\in H_\hT^*(F_i)_{\loc}[\![Q]\!]
\]
where $\alpha$ is regarded as an element of $H^2_T(\pt,\Z)$, 
$\sigma_i$ is the section class of $E_k$ associated to 
a fixed point in $F_i$ and $\sigma_{\min}$ is the minimal section class 
of $E_k$. 
Note that all but finite factors in the infinite product cancel. 
We define the operator $\cS_k \colon H_\hT^*(X)_{\loc} 
\to H_\hT^*(X)_{\loc}$ by the following 
commutative diagram:  
\begin{equation} 
\label{eq:diag_cS}
\begin{aligned} 
\xymatrix{
H_\hT^*(X)_{\loc} \ar[rrr]^{\cS_k} \ar[d]_{\iota^*} 
&&& H_\hT^*(X)_{\loc} \ar[d]_{\iota^*} \\ 
H_\hT^*(X^T)_{\loc} 
\ar[rrr]^{\bigoplus_i \Delta_i(k) e^{-z k \partial_\lambda}} 
&&&  H_\hT^*(X^T)_{\loc}
} 
\end{aligned} 
\end{equation} 
where we use the 
decomposition $H_\hT^*(X^T)_{\loc} \cong \bigoplus_i 
H^*(F_i)\otimes \Frac(H_\hT(\pt))$ in the bottom arrow 
and $e^{-k z\partial_\lambda}$ acts on $\Frac(H_\hT(\pt))$ by 
$f(\lambda,z) \mapsto f(\lambda-kz, z)$. 
The operator $\cS_k$ is a $(-k)$-twisted homomorphism. 
\end{definition} 

The following is a key property of the shift operator. 

\begin{theorem}
\label{thm:intertwine} 
We have $M(\tau) \circ \bS_k(\tau) = \cS_k \circ M(\tau)$, 
where $M(\tau)$ is the fundamental solution in 
Proposition \ref{prop:fundsol}. 
\end{theorem}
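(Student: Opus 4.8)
The plan is to prove the intertwining relation $M(\tau)\circ\bS_k(\tau)=\cS_k\circ M(\tau)$ by exhibiting both sides as a single Gromov-Witten-theoretic object on the total space $E_k$, and then identifying the two factorizations of that object with the two sides of the claimed equation. Concretely, I would consider the generating function built from genus-zero invariants of $E_k$ with two distinguished marked points, one constrained to $X_0$ and one to $X_\infty$, with a $\psi$-class inserted at the $X_\infty$-point and the remaining points carrying the lift $\htau$; schematically,
\[
\Bigl(\iota_{0*}\alpha,\ \iota_{\infty*}\Bigl(\tfrac{\beta}{z-\psi}\Bigr),\ \htau,\dots,\htau\Bigr)^{E_k,\hT}_{0,n+2,\hd}.
\]
Summing over $\hd\in\Eff(E_k)^\sec$ and $n$, this defines an operator $H^*_{\hT,\rho_0}(X)_\loc\to H^*_{\hT,\rho_k}(X)_\loc$; call it $\widetilde{\mathcal M}$. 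The strategy is to compute $\widetilde{\mathcal M}$ in two ways: degenerating/splitting at the $X_\infty$-end gives $\tbS_k(\tau)$ composed with the fundamental solution $M(\tau)$ on the $X_0=(X,\rho_0)$ copy, while degenerating at the $X_0$-end gives the fundamental solution on the $X_\infty=(X,\rho_k)$ copy composed with $\tbS_k(\tau)$; after twisting by $\Phi_k$ and unwinding the definition of $\cS_k$, these become the two sides of the theorem.

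The key steps, in order, are: (1) Set up $\widetilde{\mathcal M}$ precisely on $E_k$, using Lemma~\ref{lem:E_semiproj} to ensure everything is defined (over $\Frac(H_T^*(\pt))$ in general, integrally when $k$ is semi-negative), and Lemma~\ref{lem:coh_E} together with the restriction maps $\iota_0^*,\iota_\infty^*$ to relate $H^*_{\hT}(E_k)$-invariants to pairs of invariants on the two fibers. (2) Apply the string/divisor and—crucially—the topological recursion relations or the splitting axiom at the boundary divisor of $\overline{M}_{0,n+2}$ where the $\psi$-bearing point bubbles off; because the fibers $X_0,X_\infty$ of $E_k\to\PP^1$ are themselves $\hT$-invariant, a section-class curve splits as a section class plus a fiber class, and the fiber-class contributions assemble into exactly the Gromov-Witten series defining $M(\tau)$ on the appropriate fiber (using the formula of Proposition~\ref{prop:fundsol}), while the section-class part assembles into $\tbS_k(\tau)$ (using the definition of the shift operator). (3) Compare the two degenerations: one yields $M(\tau)\circ\bS_k(\tau)$ (the $\psi$-class sits over $X_\infty\cong(X,\rho_k)$, so after $\Phi_k^{-1}$ we get the $\rho_0$-side fundamental solution acting last), the other yields $\cS_k\circ M(\tau)$ once we recognize that the localization contributions of the section classes $\sigma_i$, weighted by the normal-bundle Euler-class factors coming from the $\psi$-classes at the nodes, are precisely the factors $\Delta_i(k)$ and the twist $e^{-zk\partial_\lambda}$ in Definition~\ref{def:shift_Givental}. (4) Verify the twisting bookkeeping: $\tbS_k$ is untwisted, $\bS_k=\Phi_k^{-1}\circ\tbS_k$ is $(-k)$-twisted, and $\cS_k$ is $(-k)$-twisted, so both sides of the asserted identity are $(-k)$-twisted homomorphisms and the equation is consistent; this also pins down which copy of the fundamental solution appears where.

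The main obstacle I expect is step~(3): carefully matching the section-class localization contributions on $E_k$ with the explicit hypergeometric factors $\Delta_i(k)$ and the shift $e^{-zk\partial_\lambda}$. One must check that the Chern roots $\rho_{i,\alpha,j}$ of the normal bundle $N_{i,\alpha}$, together with the $\Cstar$-weight of the fiber coordinate $v_2$ (which is where the equivariant parameter $z$ enters the geometry of $E_k\to\PP^1$), combine to give exactly the ratio $\prod_{c=-\infty}^0(\rho+\alpha+cz)/\prod_{c=-\infty}^{-\alpha\cdot k}(\rho+\alpha+cz)$, with the correct truncation governed by $-\alpha\cdot k$, and that the normalization by $Q^{\sigma_i-\sigma_{\min}}$ matches the shift $\hd\mapsto\hd-\sigma_{\min}$ in the Novikov variable. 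This is a localization computation on a fiber bundle over $\PP^1$ whose fixed loci over $0,\infty\in\PP^1$ are the $T$-fixed loci $F_i\subset X$; the bookkeeping of weights at the two poles of $\PP^1$, and the resulting sign/shift in $\lambda$, is where the argument is most delicate. The remaining steps are formal consequences of the splitting axiom and the definitions, modulo the routine (but lengthy) verification that all series converge $Q$-adically, which is guaranteed by semi-projectivity of $E_k$.
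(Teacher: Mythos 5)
Your proposal gets the geometric ingredients right (localization on $E_k$, fixed loci over $0$ and $\infty$ joined by a section over a fixed point, the section contribution matching $\Delta_i(k)$ and the shift $e^{-zk\partial_\lambda}$, the twisting bookkeeping), but the architecture of the argument has a genuine gap. The central move --- defining one auxiliary correlator $\widetilde{\mathcal M}$ on $E_k$ with a $\psi$-insertion at the $X_\infty$-point and then computing it ``in two ways'' by degenerating at the $X_0$-end or at the $X_\infty$-end --- is not an operation that the cited tools provide. There is no degeneration formula being set up for $E_k$, and the splitting axiom or TRR at a boundary divisor of $\overline{M}_{0,n+2}$ cannot produce the fundamental solution $M(\tau)$: the series $1/(z-\psi)$ in Proposition \ref{prop:fundsol} carries the equivariant parameter $z$, which in this geometry can only arise as the weight of smoothing the node joining a curve in a fiber to the section component, i.e.\ from the terms $L_p^{-1}\otimes\xi$, $L_q^{-1}\otimes\xi^{-1}$ in the virtual normal bundle under $\hT$-localization. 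Moreover, a single $\hT$-localization of any such correlator unavoidably produces contributions at \emph{both} poles simultaneously (every $\hT$-fixed stable map in a section class has pieces $C_0$, $C_{\sec}$, $C_\infty$), so you cannot arrange for ``only the $X_0$-end'' or ``only the $X_\infty$-end'' to split off; the claimed one-sided factorizations in your step (3) are unsubstantiated.

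Because of this, your proposal is also missing the step that actually closes the paper's argument. The paper localizes the \emph{defining} (non-descendant) correlator of $\tbS_k(\tau)$; the node-smoothing weights generate $1/(z-\psi)$ on the $X_0$ side and $1/(-z-\psi)$ on the $X_\infty$ side, yielding the symmetric identity $\left(\tbS_k(\tau)\alpha,\beta\right)=\left(\tcS_k M(\tau,z)\alpha,\,M'(\tau',-z)\beta\right)$ with fundamental solutions attached to \emph{both} arguments of the pairing. To convert this into the one-sided operator identity $M(\tau)\circ\bS_k(\tau)=\cS_k\circ M(\tau)$ one must strip $M'(\tau',-z)$ off the $\beta$-slot, and this requires the unitarity $M(\tau,-z)^*=M(\tau,z)^{-1}$ of the fundamental solution (plus the compatibilities $M'(\tau',z)=\Phi_k\circ M(\tau,z)\circ\Phi_k^{-1}$ and $\tcS_k=\Phi_k\circ\cS_k$). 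Your write-up never invokes unitarity or any substitute for it, so even if the localization bookkeeping in your step (3) were carried out correctly, the argument as structured would not reach the asserted equation. Inserting $\beta/(z-\psi)$ by hand into the defining correlator does not repair this; it changes the object being computed rather than supplying the missing adjoint step.
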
 
\begin{proof} 
A similar intertwining property has been discussed 
in \cite{Okounkov-Pandharipande:Hilbert, 
BMO:Springer, Maulik-Okounkov:qcoh_qgroup}. 
We calculate $\tbS_k(\tau)$ using $\hT$-equivariant 
localization. We refer the reader to 
\cite{Graber-Pandharipande, Cox-Katz} 
for localization arguments in Gromov-Witten theory. 
Fix a section class $\hd \in  \Eff(E_k)^\sec$. 
A $\hT$-fixed stable map $f \colon (C,x_1,\dots,x_{n+2}) \to E_k$ 
of degree $\hd$ is of the form:  
\begin{itemize} 
\item $C = C_0 \cup C_\sec \cup C_\infty$ with $C_\sec \cong \PP^1$; 
\item $f_0 =f|_{C_0}$ is a $T$-fixed stable map to $X_0$;  
\item $f_\infty = f|_{C_\infty}$ is a $T$-fixed stable map to $X_\infty$;  
\item $f_\sec = f|_{C_\sec}$ is a section of $E_k$ associated to 
a $T$-fixed point in $X$ (see \eqref{eq:fixed_point_section}). 
\end{itemize} 
Recall that the tangent space $T^1$ and the obstruction 
space $T^2$ at the stable map $f$ fit into the exact sequence 
\[
\begin{CD}
0 @>>> \Ext^0(\Omega_C^1(\bx),\cO_C) @>>> H^0(C,f^*T_{E_k}) 
@>>> T^1  \\ 
@>>> \Ext^1(\Omega_C^1(\bx),\cO_C) @>>> H^1(C,f^* T_{E_k}) 
@>>> T^2 
@>>> 0 
\end{CD}
\]
where $\bx = x_1+\cdots + x_{n+2}$. 
The virtual normal bundle at $f$ is: 
\begin{align*} 
\cN^\vir & = T^{1,\mov} - T^{2,\mov} 
= \chi(f^*T_{E_k})^\mov  
 -  \chi(\Omega_C^1(\bx),\cO_C)^\mov 
\end{align*} 
where ``mov'' means the moving part with respect to 
the $\hT$-action and 
$\chi(\cE) = H^0(C,\cE) - H^1(C,\cE)$, 
$\chi(\cE,\cF) = \Ext^0(\cE,\cF) - 
\Ext^1(\cE,\cF)$ denotes the Euler characteristics. 
Let $p$, $q$ denote the nodal intersection 
points $C_0 \cap C_\sec$, $C_\infty \cap C_\sec$ 
respectively. 
Using the normalization exact sequence 
$0 \to \cO_C \to \cO_{C_0} \oplus \cO_{C_\sec} 
\oplus \cO_{C_\infty} \to \C_p \oplus \C_q \to 0$, 
we find: 
\begin{align}
\label{eq:map_deform} 
\begin{split} 
\chi(f^*T_{E_k})^\mov 
& = 
\chi(f_0^*T_{X_0})^\mov + \chi(f_\infty^* T_{X_\infty})^\mov 
+ \chi(f_\sec^*T_{E_k}) \\ 
& + \xi + \xi^{-1}  - (T_{f(p)}E)^\mov - (T_{f(q)}E)^\mov
\end{split} 
\end{align} 
where $\xi$ is the one-dimensional $\Cstar$-representation 
of weight one. 
We write $\bx = \bx_0 + \bx_\infty$ where 
$\bx_0$, $\bx_\infty$ are divisors on $C_0$, $C_\infty$ respectively. 
Then we have 
\begin{align}
\label{eq:curve_deform} 
\begin{split} 
-\chi(\Omega^1_C(\bx),\cO_C)^\mov & = T_p C_0 \otimes T_p C_\sec  + 
T_q C_\infty \otimes T_q C_\sec \\ 
& -\chi(\Omega^1_{C_0}(\bx_0+p),\cO_{C_0})^\mov 
- \chi(\Omega^1_{C_\infty}(\bx_\infty +q), \cO_{C_\infty})^\mov. 
\end{split} 
\end{align} 
The $\hT$-fixed locus in the moduli space 
$(E_k)_{0,n+2,\hd}$ is given by 
\[
\bigsqcup_i \bigsqcup_{I_1 \sqcup I_2 = \{1,\dots,n+2\}} 
\bigsqcup_{d_0+d_\infty+ \sigma_i = \hd} 
((X_0)_{0,I_1\cup p,d_0})^T \times_{F_i}  
((X_\infty)_{0,I_2\cup q ,d_\infty})^T 
\]
where $F_i$, $\sigma_i$ are as in Definition \ref{def:shift_Givental}. 
Combining \eqref{eq:map_deform}, \eqref{eq:curve_deform}, 
we find that the virtual normal bundle $\cN_i^\vir$ 
on the component 
$((X_0)_{0,I_1\cup p,d_0})^T \times_{F_i}  
((X_\infty)_{0,I_2\cup q ,d_\infty})^T$ is:  
\[
\cN^\vir_i = \cN_0^\vir + \cN_\infty^\vir + \cN_{\sec,i}
 - N_{F_i/X_0} - N_{F_i/X_\infty} 
+ L_p^{-1} \otimes \xi + L_q^{-1} \otimes \xi^{-1} 
\]
where $\cN^\vir_0$ is the virtual normal bundle 
of $(X_0)_{0,I_1\cup p,d_0}^T$ in $(X_0)_{0,I_1\cup p, d_0}$, 
$\cN^\vir_\infty$ is the virtual normal bundle 
of $(X_\infty)_{0,I_2\cup q,d_\infty}^T$ in 
$(X_\infty)_{0,I_2\cup q, d_\infty}$, 
$L_p$ (resp.~$L_q$) is the universal cotangent line bundle 
at $p$ (resp.~$q$) and 
$\cN_{\sec,i}$ is the vector bundle with fiber 
$\chi(f_\sec^*T_{E_k})^\mov$. 
Let $N_{F_i/X} = N_i = \oplus_\alpha N_{i,\alpha}$ be decomposition 
as in Definition \ref{def:shift_Givental}. 
The normal bundle of $F_i\times \PP^1$ in $E_k$ is 
\[
\bigoplus_{\alpha} N_{i,\alpha} \boxtimes \cO_{\PP^1}(-\alpha\cdot k). 
\]
Thus we find: 
\begin{equation} 
\label{eq:Nsec} 
\cN_{\sec,i} = \xi \oplus \xi^{-1} \oplus 
\bigoplus_{\alpha} N_{i,\alpha}\otimes 
\left( \bigoplus_{c\le 0} \xi^{c}  - 
\bigoplus_{c<\alpha\cdot k} \xi^c \right).  
\end{equation} 
The virtual localization formula gives: 
\begin{multline*} 
\left(\tbS_k(\tau) \alpha,\beta\right) 
= \sum_{i,k,l,a,b}  \sum_{d_0 + d_\infty + \sigma_i = \hd} 
\corr{z \alpha, \tau,\dots,\tau,\frac{(\iota_{0,i})_*\phi_{i,a}}{z-\psi}}
^{X_0, \hT}_{0,k+2,d_0} \frac{Q^{d_0}}{k!}\\ 
\times 
\left( 
\int_{F_i} 
\frac{Q^{\sigma_i-\sigma_{\min}}}{e_\hT(\cN_{\sec,i})} 
\phi_i^a \phi_i^b \right)  
\corr{\frac{(\iota_{\infty,i})_*\phi_{i,b}}{-z-\psi}, \tau',\dots,\tau', -z \beta}
^{X_\infty,\hT}_{0,l+2,d_\infty}
\frac{Q^{d_\infty}}{l!}  
\end{multline*} 
where $\alpha \in H^*_{\hT}(X_0)$, $\beta \in H^*_{\hT}(X_\infty)$, 
$\tau' = \Phi_k (\tau)$, the maps $\iota_{0,i} \colon F_i \to X_0$, 
$\iota_{\infty,i}\colon F_i \to X_\infty$ are the natural inclusions, 
$\{\phi_{i,a}\}\subset H^*(F_i)$ is a basis, 
$\{\phi_i^a\}$ is the dual basis such that $\int_{F_i} \phi_{i,a} 
\cup \phi_i^b = \delta_a^b$. 
Note that we have by \eqref{eq:Nsec}, 
\[
\frac{Q^{\sigma_i -\sigma_{\min}}}{e_\hT(\cN_{\vir,i})} 
= \frac{1}{z(-z)} \frac{1}{e_\hT(N_{F_i/X_\infty})} 
\left(e^{k z\partial_\lambda} \Delta_i(k)\right).  
\]
Combining these equations, we conclude 
\[
\left(\tbS_k(\tau) \alpha, \beta \right) 
= \left( \tcS_k M(\tau,z) \alpha, M'(\tau',-z) \beta \right)
\]
where we write the argument $z$ in the fundamental 
solution explicitly and 
\begin{itemize} 
\item $\tcS_k \colon H_\hT(X_0)_\loc \to H_\hT(X_\infty)_\loc$ 
is a map defined similarly to $\cS_k$ by replacing 
$\bigoplus_i \Delta_i(k) e^{-k z\partial_\lambda}$ in 
the diagram \eqref{eq:diag_cS} 
with $\bigoplus_i (e^{kz \partial_\lambda} \Delta_i(k))$; 
\item $M'(\tau',z)$ is defined similarly to Proposition 
\ref{prop:fundsol} by replacing $T$-equivariant 
Gromov-Witten invariants there with $(\hT,\rho_k)$-equivariant 
invariants. 
\end{itemize} 
Note that $M'(\tau',z) = \Phi_k \circ M(\tau,z) \circ \Phi_k^{-1}$ 
and $\tcS = \Phi_k \circ \cS$. 
The conclusion follows from 
the so-called ``unitarity'' $M(\tau,-z)^* = M(\tau,z)^{-1}$ 
of the fundamental solution (see \cite[\S 1]{Givental:elliptic}). 
\end{proof} 

Theorem \ref{thm:intertwine} and the differential equation 
$\partial_i \circ M(\tau) = M(\tau) \circ \nabla_i$ show: 

\begin{corollary} 
\label{cor:intertwine}
The shift operator commutes with the quantum connection, 
i.e.~$[\nabla_i, \bS_k(\tau)] = 0$ for $i=0,\dots,N$. 
\end{corollary}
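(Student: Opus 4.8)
The plan is to deduce the corollary formally from the intertwining relation $M(\tau)\circ\bS_k(\tau)=\cS_k\circ M(\tau)$ of Theorem \ref{thm:intertwine} together with the defining equation $\partial_i\circ M(\tau)=M(\tau)\circ\nabla_i$ of the fundamental solution (Proposition \ref{prop:fundsol}), where throughout we read these as identities of operators acting on $\tau$-dependent sections in $H_\hT^*(X)_{\loc}[\![Q]\!][\![\tau^0,\dots,\tau^N]\!]$ and $\partial_i$ denotes differentiation in $\tau^i$. Since $M(\tau)$ is invertible --- for instance by the unitarity $M(\tau,-z)^*=M(\tau,z)^{-1}$ recalled in the proof of Theorem \ref{thm:intertwine}, or because it has the form $\id$ plus corrections of positive order in $z^{-1}$, $Q$ and the $\tau^i$ --- the fundamental solution equation says $\nabla_i=M(\tau)^{-1}\circ\partial_i\circ M(\tau)$, i.e.\ $M(\tau)$ gauges the quantum connection to the trivial connection, while Theorem \ref{thm:intertwine} reads $\bS_k(\tau)=M(\tau)^{-1}\circ\cS_k\circ M(\tau)$.

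The one genuine input beyond these is the observation that the Givental-space operator $\cS_k$ of Definition \ref{def:shift_Givental} does not depend on $\tau$: it is assembled from the classes $\Delta_i(k)\in H_\hT^*(F_i)_{\loc}[\![Q]\!]$ and the shift $e^{-zk\partial_\lambda}$ on $\Frac(H_\hT^*(\pt))$, none of which involves $\tau^0,\dots,\tau^N$. Hence $\partial_i$ commutes with $\cS_k$. Substituting the two rewritings above and cancelling the internal pairs $M(\tau)\circ M(\tau)^{-1}=\id$ then gives
\[
\nabla_i\circ\bS_k(\tau)
= M(\tau)^{-1}\circ\partial_i\circ\cS_k\circ M(\tau)
= M(\tau)^{-1}\circ\cS_k\circ\partial_i\circ M(\tau)
= \bS_k(\tau)\circ\nabla_i,
\]
which is the claim $[\nabla_i,\bS_k(\tau)]=0$.

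I do not anticipate any serious obstacle: the statement is a purely formal consequence of results already proved. The only two points meriting an explicit remark are the invertibility of $M(\tau)$ and the $\tau$-independence of $\cS_k$, both immediate from the definitions. In particular the fact that $\bS_k(\tau)$ and $\cS_k$ are $(-k)$-twisted homomorphisms (and so fail to be $\Frac(H_\hT^*(\pt))$-linear) is irrelevant here, since the whole argument takes place in the algebra of operators on the Givental space and only uses that $\cS_k$ carries no $\tau$-dependence.
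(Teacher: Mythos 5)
Your argument is correct and is exactly the paper's route: the corollary is deduced from Theorem \ref{thm:intertwine} together with $\partial_i\circ M(\tau)=M(\tau)\circ\nabla_i$, conjugating by the (invertible) fundamental solution and using that $\cS_k$ carries no $\tau$-dependence. You have simply made explicit the details that the paper leaves as a one-line remark.
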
 

This corollary is shown in \cite[\S 8]{Maulik-Okounkov:qcoh_qgroup} 
in the case where $\tau=0$. 
We also remark that the shift operators commute each other. 
\begin{corollary} 
\label{cor:commuting} 
We have $\cS_k \circ \cS_l = Q^{d(k,l)}\cS_{k+l}$ 
for some $d(k,l)\in H_2(X,\Z)$ which is symmetric in $k$ and $l$. 
In particular, $\bS_k \circ \bS_l = Q^{d(k,l)} \bS_{k+l}$, 
$[\cS_k,\cS_l] = [\bS_k, \bS_l ] = 0$. 
\end{corollary}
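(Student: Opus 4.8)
The plan is to deduce the multiplicativity $\cS_k\circ\cS_l = Q^{d(k,l)}\cS_{k+l}$ directly from the explicit formula for $\cS_k$ in Definition \ref{def:shift_Givental}, working after restriction to $X^T$ where everything is diagonalized over the fixed components $F_i$. On each $F_i$ the operator $\cS_k$ acts as $\Delta_i(k)\,e^{-kz\partial_\lambda}$, so the composition $\cS_k\circ\cS_l$ acts on the $F_i$-block as $\Delta_i(k)\,e^{-kz\partial_\lambda}\bigl(\Delta_i(l)\,e^{-lz\partial_\lambda}\bigr) = \Delta_i(k)\cdot\bigl(e^{-kz\partial_\lambda}\Delta_i(l)\bigr)\cdot e^{-(k+l)z\partial_\lambda}$. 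Thus the whole problem reduces to the identity, on each $F_i$,
\begin{equation*}
\Delta_i(k)\cdot\bigl(e^{-kz\partial_\lambda}\Delta_i(l)\bigr) = Q^{d(k,l)}\,\Delta_i(k+l),
\end{equation*}
with $d(k,l)$ independent of $i$ and symmetric in $k,l$. This would then give $\cS_k\circ\cS_l = Q^{d(k,l)}\cS_{k+l}$, and since $\bS_k = \Phi_k^{-1}\circ\tbS_k$ corresponds to $\cS_k$ under the fundamental solution by Theorem \ref{thm:intertwine}, the same relation passes to $\bS_k$; commutativity $[\cS_k,\cS_l]=0$ follows by symmetry of $d(k,l)$, and likewise $[\bS_k,\bS_l]=0$.

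First I would isolate the $Q$-exponent and the rational factor separately. Recall $\Delta_i(k) = Q^{\sigma_i-\sigma_{\min}}\prod_{\alpha}\prod_j \frac{\prod_{c=-\infty}^0(\rho_{i,\alpha,j}+\alpha+cz)}{\prod_{c=-\infty}^{-\alpha\cdot k}(\rho_{i,\alpha,j}+\alpha+cz)}$, where $\sigma_i$ is the section class of $E_k$ through $F_i$. Here a subtlety arises: $\sigma_i$ and $\sigma_{\min}$ depend on $k$ (they live in $E_k$, not $E_l$ or $E_{k+l}$), so I must track how the section classes for $E_k$, $E_l$, $E_{k+l}$ compare. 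The cleanest way is to note that $\sigma_i(k)-\sigma_i(l')$ for two fixed points $i,i'$ is governed, as in the proof of Lemma on $\Eff(E_k)^\sec$, by chains of $k$-orbits; concretely if $F_i$, $F_{i'}$ are joined by a $k(\Cstar)$-orbit then $\sigma_{i'}(k)-\sigma_i(k) = -(\text{sum of weights})\cdot[\text{that }\PP^1]$. I would define $d(k,l) := \sigma_{\min}(k+l)-\sigma_{\min}(k)-\sigma_{\min}(l)$, which lies in $H_2(X,\Z)$ after the canonical splitting $H_2(E_k)\cong H_2(X)\oplus\Z\sigma_{\min}$; symmetry in $k,l$ is then manifest. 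One then checks $\sigma_i(k+l)-\sigma_i(k)-\sigma_i(l) = d(k,l)$ is the same element for every $i$ (the section classes differ by fiber classes which cancel in the difference), so the $Q$-exponent in $\Delta_i(k)\cdot(e^{-kz\partial_\lambda}\Delta_i(l))$, which is $(\sigma_i(k)-\sigma_{\min}(k))+(\sigma_i(l)-\sigma_{\min}(l))$, equals $(\sigma_i(k+l)-\sigma_{\min}(k+l)) + d(k,l)$, as required.

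For the rational factor, the telescoping is purely formal: applying $e^{-kz\partial_\lambda}$ to $\Delta_i(l)$ shifts $\lambda\mapsto\lambda-kz$, hence shifts $\rho_{i,\alpha,j}+\alpha \mapsto \rho_{i,\alpha,j}+\alpha-(\alpha\cdot k)z$, which reindexes the product $\prod_{c=-\infty}^{-\alpha\cdot l}(\cdots+cz)$ into $\prod_{c=-\infty}^{-\alpha\cdot l}(\rho_{i,\alpha,j}+\alpha+(c-\alpha\cdot k)z) = \prod_{c'=-\infty}^{-\alpha\cdot(k+l)}(\rho_{i,\alpha,j}+\alpha+c'z)$; the $\prod^0$ numerator in $\Delta_i(l)$ likewise shifts, and then multiplying by the unshifted $\Delta_i(k)$ and cancelling the overlapping numerator/denominator ranges leaves exactly the single ratio $\frac{\prod_{c=-\infty}^0}{\prod_{c=-\infty}^{-\alpha\cdot(k+l)}}$ defining $\Delta_i(k+l)$ (up to the $Q$-power already accounted for). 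I would present this eigenvalue-by-eigenvalue. The main obstacle I anticipate is not the telescoping but the bookkeeping of section classes — verifying that $d(k,l)$ is well-defined in $H_2(X,\Z)$, independent of $i$, and symmetric — so I would spend most of the care there, invoking the Bialynicki-Birula structure and the Hirzebruch-surface computation from the earlier lemma exactly as needed, and only sketch the rational telescoping as a routine reindexing.
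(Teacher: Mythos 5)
Your computational core is exactly what the paper leaves as ``a straightforward computation'': restrict to $X^T$, act blockwise by $\Delta_i(k)e^{-kz\partial_\lambda}$, and telescope the shifted ratios so that $\Delta_i(k)\cdot\bigl(e^{-kz\partial_\lambda}\Delta_i(l)\bigr)$ reproduces the rational factor of $\Delta_i(k+l)$; that part is correct, as is the transfer to $\bS_k$ via Theorem \ref{thm:intertwine} and the invertibility of $M(\tau)$. Where you genuinely diverge from the paper is in the section-class bookkeeping. The paper builds the $X$-bundle $E_{k,l}$ over $\PP^1\times\PP^1$, whose restrictions to the two rulings and to the diagonal are $E_k$, $E_l$, $E_{k+l}$, and uses addition in $H_2(E_{k,l},\Z)$ to get a map $\#\colon H_2^\sec(E_l,\Z)\times H_2^\sec(E_k,\Z)\to H_2^\sec(E_{k+l},\Z)$ with $\sigma_x\#\sigma_x=\sigma_x$ for every fixed point $x$; then $d(k,l)=\sigma_{\min}(k+l)-\sigma_{\min}(k)\#\sigma_{\min}(l)$, and both well-definedness and symmetry come for free. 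Your route stays inside the localization formula of Definition \ref{def:shift_Givental} and instead needs the statement that the fiber-class differences $\sigma_x(k)-\sigma_y(k)\in H_2(X,\Z)$ are additive (in fact linear) in $k$; this can indeed be proved by the Bialynicki-Birula/Hirzebruch-surface chains you cite, or more quickly by pairing with lifted classes $\hat h$, $h\in H^2_T(X)$, as in the computation of Lemma \ref{lem:fixed_weight}, which gives $\hat h\cdot\sigma_x(k)=-\,h|_x\cdot k$ and hence linearity of $h\cdot(\sigma_x(k)-\sigma_y(k))$ in $k$. So both approaches work; the paper's $E_{k,l}$ device buys manifest symmetry with no case analysis, yours is more hands-on but self-contained at the level of the fixed-point data.

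One step as written needs repair. You cannot define $d(k,l)=\sigma_{\min}(k+l)-\sigma_{\min}(k)-\sigma_{\min}(l)$ ``after the canonical splitting $H_2(E_k)\cong H_2(X)\oplus\Z\sigma_{\min}$'': the three classes live in three different bundles, and with that particular splitting each minimal section has $H_2(X)$-component zero, so the formula would read $d(k,l)=0$. What your argument actually requires is a comparison through a common reference fixed point $x$: set $d(k,l)=\bigl(\sigma_{\min}(k+l)-\sigma_x(k+l)\bigr)-\bigl(\sigma_{\min}(k)-\sigma_x(k)\bigr)-\bigl(\sigma_{\min}(l)-\sigma_x(l)\bigr)\in H_2(X,\Z)$ and prove independence of $x$, which is precisely the additivity statement above; your parenthetical ``the section classes differ by fiber classes which cancel in the difference'' asserts this rather than proves it. With that point made precise (by either the orbit-chain argument or the pairing computation), the $Q$-exponent identity, the telescoping, and the deduction of $[\cS_k,\cS_l]=[\bS_k,\bS_l]=0$ from the symmetry of $d(k,l)$ all go through as you describe.
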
 
\begin{proof} 
Consider the $X$-bundle $E_{k,l}$ over $\PP^1\times \PP^1$ 
given by 
\[
E_{k,l} = X \times (\C^2 \setminus \{0\}) \times 
(\C^2\setminus \{0\})\big/\C^\times \times \C^\times 
\]
where $(s_1,s_2)\in \C^\times \times \C^\times$ 
acts on $X \times \C^2 \times \C^2$ 
by $(s_1,s_2) \cdot (x,(a_1,a_2),(b_1,b_2)) = 
(s_1^k s_2^l, (s_1^{-1} a_1, s_1^{-1} a_2), 
(s_2^{-1}b_1, s_2^{-1} b_2))$. Note that $E_{k,l}|_{\PP^1\times [1:0]} 
\cong E_k$ and $E_{k,l}|_{[1:0]\times \PP^1} \cong E_l$ 
and $E_{k,l}|_{\Delta(\PP^1)} \cong E_{k+l}$, where 
$\Delta(\PP^1) \subset \PP^1\times \PP^1$ denotes the diagonal. 
The addition in $H_2(E_{k,l},\Z)$ defines a map 
$\# \colon H_2^\sec(E_l,\Z) \times H_2^\sec(E_k, \Z) \to H_2^\sec(E_{k+l},\Z)$. 
For any $T$-fixed point $x$, the section class $\sigma_x$ 
(see \eqref{eq:fixed_point_section}) associated to $x$ 
satisfies $\sigma_x \# \sigma_x = \sigma_x$. 
A straightforward computation now shows that 
$\cS_k \circ \cS_l = 
Q^{\sigma_{\min}(k+l) - \sigma_{\min}(k) \# \sigma_{\min}(l)} \cS_{k+l}$, 
where $\sigma_{\min}(k)$ denotes the minimal section class of $E_k$. 
The conclusion follows by setting $d(k,l) = \sigma_{\min}(k+l) 
- \sigma_{\min}(k) \# \sigma_{\min}(l)$ and 
the commutativity of $\#$. 
\end{proof} 

\subsection{Relation to the Seidel Representation} 
Taking the $z\to 0$ limit of shift operators, we obtain a 
big quantum cohomology version of the Seidel 
representation \cite{Seidel:pi1}. The author learned the idea of 
big Seidel elements from Eduardo Gonzalez during 
joint work \cite{Gonzalez-Iritani:Selecta} with him. 

\begin{definition}[Seidel elements] 
\label{def:Seidel_elem} 
Let $k\in \Hom(\Cstar, T)$ be a semi-negative homomorphism. 
The element $S_k(\tau) := \lim_{z\to 0} \bS_k(\tau) 1$ of 
$H_T^*(X)[\![Q]\!][\![\tau^0,\dots,\tau^m]\!]$ is called 
the \emph{Seidel element}.  
\end{definition}  

By Corollary \ref{cor:intertwine}, 
the $z\to 0$ limit of the operator $\bS_k(\tau)$ 
commutes with the quantum multiplication, and therefore 
coincides with the quantum multiplication by $S_k(\tau)$ 
(see also \cite[\S 8]{Maulik-Okounkov:qcoh_qgroup}). 
By Corollary \ref{cor:commuting}, we have 
\[
S_k(\tau) \star S_l(\tau) = Q^{d(k,l)} S_{k+l}(\tau).   
\]
This is called the \emph{Seidel representation}. 

\subsection{Relation to the $\hGamma$-Integral Structure} 
We remark a relationship between the shift operator 
and the $\hGamma$-integral structure introduced in 
\cite{Iritani, KKP, CIJ}. For quantum cohomology of 
the Hilbert scheme of points on $\C^2$, it has been observed 
in \cite{Okounkov-Pandharipande:Hilbert} that 
certain $\Gamma$-factors play an important role in the difference equation 
associated to the shift operators. 

We recall the $\hGamma$-class of $X$. 
Let $\delta_1,\dots,\delta_D$ denote the $T$-equivariant Chern roots 
of the tangent bundle $TX$ such that $c^T(TX) = (1 + \delta_1) 
\cdots (1+ \delta_D)$. The \emph{$T$-equivariant $\hGamma$-class} 
of $X$ is the class 
\[
\hGamma_X = \hGamma(TX) = \prod_{i=1}^D \Gamma (1 + \delta_i) 
\] 
in $H^{**}_T(X) = \prod_{p=0}^\infty H^p_T(X)$. 
Here $\Gamma(z) = \int_0^\infty e^{-t} t^{z-1} dt$ 
is Euler's $\Gamma$-function. By Taylor expansion, 
the right-hand side becomes a symmetric formal power series 
in $\delta_1,\dots,\delta_D$ and thus can be expressed in terms 
of the equivariant Chern classes of $TX$. 

The \emph{$\hGamma$-integral structure} 
assigns the following homogeneous flat section $\frs(E)$ of the quantum 
connection to a $T$-equivariant vector bundle $E\to X$: 
\[
\frs(E) = (2\pi)^{-D/2} M(\tau)^{-1} z^{-\mu}z^{c_1(X)} 
\hGamma_X (2\pi\iu)^{\deg/2}\ch^T(E) 
\]
where $D = \dim_\C X$, $M(\tau)$ is the fundamental solution in 
Proposition \ref{prop:fundsol}, $\mu\in \End_\C(H^*_T(X))$ 
is the Hodge grading operator defined by 
$\mu(\phi_i) = (\frac{\deg \phi_i}{2} - \frac{D}{2}) \phi_i$, 
$z^{c_1(X)} = e^{c_1(X) \log z}$ and $(2\pi \iu)^{\deg/2} \ch^T(E) 
 = \sum_{p=0}^\infty (2\pi\iu)^p \ch_p^T(E)$. 
The section $\frs(E)$ is flat, i.e.~$\nabla_i \frs(E) = 0$ and 
is homogeneous in the sense that 
\[
\left[ z \parfrac{}{z} + \mu + \sum_{i=0}^N 
\left(1-\frac{1}{2} \deg \phi_i \right) 
\tau^i \parfrac{}{\tau^i}  + \sum_{i=0}^N \rho^i \parfrac{}{\tau^i} 
\right] \frs(E) = 0 
\]
where we set $c_1(X) = \sum_{i=0}^N \rho^i \phi_i$. 
A key property of $\frs(E)$ is that the pairing 
\[
\left(\frs(E)(\tau,e^{-\pi\iu}z), \frs(F)(\tau,z)\right) 
\]
equals the $T$-equivariant Euler pairing $z^{-\deg/2} 
(2\pi\iu)^{\deg/2}\chi(E,F)$, where 
$\chi(E,F)= \sum_{i=0}^D (-1)^i \ch^T(\Ext^i(E,F)) 
\in H^{**}_T(\pt)$.  This follows from an appropriate 
equivariant Hirzebruch-Riemann-Roch formula. 
See \cite[\S 2-3]{CIJ} for more details. 

The $T$-equivariant $K$-group is a module over 
$K^0_T(\pt) = \C[T]$ and the Chern character 
$\ch^T \colon K^0_T(\pt) \to H^{**}_T(\pt)$ 
can be viewed as the pull-back by the universal covering 
$\exp \colon \Lie(T) = \C^m \to T= (\Cstar)^m$. 
A deck-transformation of this covering is given by 
the shift\footnote
{the shift by $2\pi\iu$ is superseded by the shift by $z$ because 
of the operators $z^{-\mu}$ and $(2\pi\iu)^{\deg/2}$.} 
of equivariant parameters $\lambda_j \to \lambda_j+2\pi\iu$. 
This suggests that $\frs(E)$ should be ``invariant'' 
under integral shifts of equivariant parameters. 
\begin{proposition} 
\label{prop:Gamma_integral_shift}
When the Novikov variable $Q$ is set to be one, 
the flat section $\frs(E)$ is invariant under the shift 
operator: 
\[
\bS_k \frs(E) = \frs(E) 
\]
for every semi-negative $k\in \Hom(\Cstar,T)$. 
\end{proposition}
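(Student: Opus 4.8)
The plan is to reduce the statement to the intertwining relation of Theorem~\ref{thm:intertwine} together with an explicit computation of the action of $\cS_k$ on the ``cohomological frame'' appearing in the definition of $\frs(E)$. Since $\frs(E) = (2\pi)^{-D/2} M(\tau)^{-1} z^{-\mu} z^{c_1(X)} \hGamma_X (2\pi\iu)^{\deg/2} \ch^T(E)$, and by Theorem~\ref{thm:intertwine} we have $M(\tau)^{-1}\cS_k = \bS_k(\tau) M(\tau)^{-1}$, it suffices to show that the operator $z^{-\mu} z^{c_1(X)} \hGamma_X (2\pi\iu)^{\deg/2}(\cdot)$ intertwines $\cS_k$ (on the left) with the identity after setting $Q=1$. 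In other words, writing $G = z^{-\mu} z^{c_1(X)} \hGamma_X (2\pi\iu)^{\deg/2} \ch^T(E)$ (a fixed element of the localized cohomology, independent of $\tau$), I must verify that $\cS_k G = G$ when $Q=1$. Then $\bS_k \frs(E) = (2\pi)^{-D/2} M(\tau)^{-1} \cS_k G = (2\pi)^{-D/2} M(\tau)^{-1} G = \frs(E)$, using that $\bS_k = \Phi_k^{-1}\tbS_k$ acts on flat sections through the Givental-space operator $\cS_k$ precisely by the relation in Theorem~\ref{thm:intertwine}.

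The computation $\cS_k G = G$ (with $Q=1$) is carried out via localization to $X^T$ using the commutative diagram~\eqref{eq:diag_cS}: on each component $F_i$, $\cS_k$ acts by $\Delta_i(k) e^{-zk\partial_\lambda}$. First I would restrict $G$ to $F_i$; the key point is that $\iota_{F_i}^*\hGamma_X$ factors through $\hGamma(TF_i)\cdot \hGamma(N_i)$, and $\hGamma(N_i) = \prod_\alpha \prod_j \Gamma(1 + \rho_{i,\alpha,j} + \alpha)$ in the localized ring, where $\alpha$ runs over the $T$-weights of $N_i$ and $\rho_{i,\alpha,j}$ are the Chern roots. On the other hand, the operator $e^{-zk\partial_\lambda}$ shifts $\lambda \mapsto \lambda - kz$, hence sends $\Gamma(1 + \rho_{i,\alpha,j} + \alpha)$ to $\Gamma(1 + \rho_{i,\alpha,j} + \alpha - (\alpha\cdot k)z)$ (only the $\lambda$-dependence through the weight $\alpha$ is affected, since the Chern roots of $N_i$ are $\lambda$-independent classes on $F_i$ once we fix a splitting). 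The factor $\Delta_i(k)$, with $Q=1$, is exactly the finite product of linear factors that converts $\Gamma(1 + \rho_{i,\alpha,j}+\alpha)$ back into $\Gamma(1 + \rho_{i,\alpha,j}+\alpha-(\alpha\cdot k)z)$: indeed the ratio $\Gamma(1+w)/\Gamma(1+w-mz)$ equals $\prod_{c=-m+1}^{0}(w+cz)$ up to the conventions in Definition~\ref{def:shift_Givental}, which is precisely how $\Delta_i(k)$ is built (with $m = -\alpha\cdot k$). So on the $\hGamma$-part the effect of $\Delta_i(k) e^{-zk\partial_\lambda}$ cancels, leaving $\hGamma(N_i)$ unchanged. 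For the remaining factors $z^{-\mu}z^{c_1(X)}(2\pi\iu)^{\deg/2}\ch^T(E)$: the operator $e^{-zk\partial_\lambda}$ also acts on $\ch^T(E)|_{F_i}$ and on $z^{c_1(X)}$ by shifting equivariant parameters, and one checks these shifts are absorbed by the remaining powers-of-$z$ and Chern-root factors in $\Delta_i(k)$ together with the $z^{-\mu}z^{c_1(X)}$ grading operators in exactly the same way — this is the standard ``$\hGamma$-conjugates the shift into the identity'' mechanism, and setting $Q=1$ kills the $Q^{\sigma_i - \sigma_{\min}}$ prefactor in $\Delta_i(k)$ which is what makes the answer $\tau$- and $i$-uniform.

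The main obstacle I expect is bookkeeping the $e^{-zk\partial_\lambda}$ action carefully across \emph{all} the factors simultaneously — in particular making sure that the shift of $\lambda$ inside $z^{c_1(X)}$, inside $\ch^T(E)$, and inside the Chern roots (viewed as $T$-equivariant classes on $F_i$, which do carry $\lambda$-dependence through their own weights, not just through the ambient weight $\alpha$) is matched termwise against $\Delta_i(k)$ and the grading operators $z^{-\mu}$. Concretely, one needs the identity, valid after restriction to each $F_i$ and after $Q=1$,
\[
\Delta_i(k)\,\bigl(e^{-zk\partial_\lambda} \iota_{F_i}^* G\bigr) = \iota_{F_i}^* G,
\]
which when unwound becomes a statement about how the functional equation $\Gamma(1+w) = w\,\Gamma(w)$ iterates; the potential pitfall is sign and convention errors in the range of the product ($\prod_{c=-\infty}^0$ versus $\prod_{c=-\infty}^{-\alpha\cdot k}$) and in the direction of the parameter shift. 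Once this local identity is established on every fixed component, the commutative diagram~\eqref{eq:diag_cS} gives $\cS_k G = G$ globally, and combined with Theorem~\ref{thm:intertwine} the proposition follows. A secondary technical point is that $\frs(E)$ a priori lives in a completed cohomology ring $H^{**}$ and involves infinite series in the Chern roots, so one should remark that the shift operator $\cS_k$, the grading operators, and $\Delta_i(k)$ all act continuously on this completion, so that the termwise cancellation is legitimate; this is routine given that all but finitely many factors of $\Delta_i(k)$ cancel, as already noted in Definition~\ref{def:shift_Givental}.
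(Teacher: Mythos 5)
Your proposal is correct and takes essentially the same route as the paper: reduce via Theorem \ref{thm:intertwine} to showing that the Givental-space operator fixes the frame $z^{-\mu}z^{c_1(X)}\hGamma_X(2\pi\iu)^{\deg/2}\ch^T(E)$ at $Q=1$, then check this on each fixed component $F_i$, where $\Delta_i(k)e^{-zk\partial_\lambda}$ is matched against the $\Gamma$-factors of $\hGamma(N_i)$ by the functional equation $\Gamma(1+z)=z\Gamma(z)$, with the $z^{(\rho_{i,\alpha,j}+\alpha)/z}$ powers coming from $z^{-\mu}z^{c_1(X)}$ absorbing the leftover powers of $z$. One small correction to your bookkeeping: the shift acting on $(2\pi\iu)^{\deg/2}\ch^T(E)|_{F_i}=\sum_\epsilon e^{2\pi\iu\epsilon/z}$ is not absorbed by $\Delta_i(k)$ (which involves only the normal bundle of $F_i$); it is invariant simply because each $T$-weight of $E$ pairs integrally with $k$, so $e^{2\pi\iu\epsilon/z}$ is unchanged under $\lambda\mapsto\lambda-kz$ --- this integrality is precisely the point of the $\hGamma$-integral structure, and one should also note (as the paper does, citing the divisor equation) that the specialization $Q=1$ is well defined for $\frs(E)$.
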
 
\begin{proof} 
As is discussed in \cite[\S 3]{CIJ}, the divisor equation 
shows that the specialization $Q=1$ of the Novikov variable 
is well-defined for $\frs(E)$. 
In view of the intertwining property in Theorem \ref{thm:intertwine}, 
it suffices to show that 
\[
\tcS_k \left( z^{-\mu} z^{c_1(X)} \hGamma_X 
(2\pi\iu)^{\deg/2} \ch(E) \right)= 
z^{-\mu} z^{c_1(X)} \hGamma_X 
(2\pi\iu)^{\deg/2} \ch(E).  
\] 
The restriction to the $T$-fixed component $F_i$ gives 
\begin{multline*} 
\left[ z^{-\mu} z^{c_1(X)} \hGamma_X (2\pi\iu)^{\deg/2}\ch(E) \right]_{F_i} 
= z^{D/2} z^{c_1(F_i)/z} \left(z^{-\deg/2} \hGamma_{F_i}\right) \\
\times \left( \prod_{\alpha} \prod_{j=1}^{\rank N_{\alpha,i}} 
z^{(\rho_{i,\alpha,j}+\alpha)/z}
\Gamma\left(1+ \frac{\rho_{i,\alpha,j}}{z} + \frac{\alpha}{z}\right) \right) 
\sum_{\epsilon} e^{2\pi\iu \epsilon/z} 
\end{multline*} 
where $\epsilon$ ranges over $T$-equivariant 
Chern roots of $E$ and we use the notation 
from Definition \ref{def:shift_Givental}. 
The conclusion easily follows from the identity $\Gamma(1+z) = z \Gamma(z)$. 
\end{proof} 

\section{Toric Mirror Theorem} 
\label{sec:mirrorthm} 
In this section we give a new proof of a mirror theorem 
\cite{Givental:toric_mirrorthm} 
for toric manifolds.  

\subsection{Toric Manifolds} 
We fix notation for toric manifolds. 
For background materials on toric manifolds, we refer the reader to 
\cite{Oda:toric,Audin,CLS}. 
Let $\bN \cong \Z^D$ 
denote a lattice. A toric manifold is given by a rational 
simplicial fan $\Sigma$ in the vector space $\bN_\R = 
\bN \otimes \R$. We assume that 
\begin{itemize} 
\item each cone $\sigma$ of $\Sigma$ is generated by 
part of a $\Z$-basis of $\bN$; 
\item the support $|\Sigma| = \bigcup_{\sigma\in \Sigma} \sigma$ 
of $\Sigma$ is convex and full-dimensional; 
\item $\Sigma$ admits a strictly convex piecewise linear 
function $\eta \colon |\Sigma| \to \R$. 
\end{itemize} 
These assumptions ensure that the corresponding toric variety 
$X_\Sigma$ is smooth and satisfies the hypotheses 
in \S\ref{subsec:hypotheses}. We do not require that $X$ 
is compact, or $c_1(X)$ is semipositive. 
Let $b_1,\dots,b_m\in \bN$ be primitive integral generators 
of one-dimensional cones of $\Sigma$. 
Let $\beta \colon \Z^m \to \bN$ be the homomorphism 
sending the standard basis vector $e_i\in \Z^m$ to $b_i$. 
The \emph{fan sequence} is the exact sequence 
\[
\begin{CD}
0 @>>> \LL @>>> \Z^m @>{\beta}>> \bN @>>> 0
\end{CD}
\]
with $\LL = \Ker(\beta)$. Set $K = \LL\otimes \Cstar$. 
The inclusion $\LL \hookrightarrow \Z^m$ induces 
the inclusion $K \hookrightarrow (\Cstar)^m$ of tori 
and defines a linear $K$-action on $\C^m$. 
The toric variety associated to $\Sigma$ is given by 
the GIT quotient 
\[
X_\Sigma = U/K, \qquad 
U = \C^m \setminus Z 
\]
where $Z\subset \C^m$ is the common zero set of monomials 
$z^I = z_{i_1} \cdots z_{i_k}$ with $I=\{i_1,\dots,i_k\}$ 
such that $\{b_i : 1\le i\le m, i\notin I\}$ spans a 
cone in $\Sigma$. We consider the $T$-action on $X_\Sigma$ 
indueced by the $T = (\Cstar)^m$-action on $\C^m$. 

Let $\lambda_i \in H^2_T(\pt)\cong \Lie(T)^*$ 
denote the class corresponding to the $i$th projection 
$T \to \Cstar$. We have 
\[
H_T^*(\pt) = \C[\lambda_1,\dots,\lambda_m]. 
\]
All the $T$-weights of $H^0(X_\Sigma,\cO)$ are contained 
in the cone $\sum_{i=1}^m \R_{\ge 0}(-\lambda_i)$ 
and therefore the condition \eqref{weight_cone} in \S \ref{subsec:hypotheses} 
is satisfied. A cocharacter $k\colon \Cstar \to T$ is semi-negative 
in the sense of Definition \ref{def:seminegative}
if $\lambda_i\cdot k \ge 0$ for all $i=1,\dots,m$. 

Let $u_i\in H^2_T(X_\Sigma)$ denote the class of the 
torus-invariant divisor $\{z_i = 0\}$ defined as the vanishing set 
of the $i$th co-ordinate $z_i$ on $\C^m$. 
The $T$-equivariant 
cohomology ring of $X_\Sigma$ is generated by 
these classes: 
\[
H^*_T(X_\Sigma) \cong H^*_T(\pt) [u_1,\dots,u_m]/ (\frI_1 + \frI_2)
\] 
where $\frI_1$ is the ideal generated by $\prod_{i\in I} u_i$ 
such that $\{b_i : i\in I\}$ does not span a cone in $\Sigma$ 
and $\frI_2$ is the ideal generated by $\sum_{i=1}^m 
\chi(b_i) (u_i-\lambda_i)$ with $\chi \in \Hom(\bN,\Z)$. 

\subsection{Mirror Theorem} 
\label{subsec:mirrorthm} 
Define a cohomology-valued hypergeometric series $I(y,z)$ 
by the formula: 
\[
I(y,z) = z e^{\sum_{i=1}^m u_i \log y_i/z} 
\sum_{d\in \Eff(X_\Sigma)} 
\left( \prod_{i=1}^m 
\frac{\prod_{c=-\infty}^{0} (u_i + cz)}
{\prod_{c=-\infty}^{u_i \cdot d} (u_i + cz)} 
\right) 
Q^d y_1^{u_1\cdot d} \cdots y_m^{u_m\cdot d}. 
\]
This formula defines an element of 
$H^*_\hT(X_\Sigma)_\loc[\![Q]\!][\![\log y]\!]$. 
We may write $I(y,z)$ as a sum over $H_2(X_\Sigma,\Z)$ since 
the summand automatically 
vanishes if $d\notin \Eff(X_\Sigma)$. 

Givental's mirror theorem \cite{Givental:toric_mirrorthm} 
(generalized later in \cite{LLY:III,Iritani:genmir,Brown:toric_fibration}) 
states the following: 
\begin{theorem} 
\label{thm:mirrorthm} 
The function $I(y,-z)$ lies on the Givental cone 
associated to genus-zero Gromov-Witten theory of $X_\Sigma$. 
\end{theorem}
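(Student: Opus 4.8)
The plan is to characterize $I(y,z)$ as an integral curve of the commuting vector fields $\bbf \mapsto z^{-1}\cS_i\bbf$ on the Givental cone, as anticipated in Theorems~2 and~3 of the introduction. The starting point is that the Givental cone $\cL_{X_\Sigma}$ is preserved under the flows generated by the shift operators: by Theorem~\ref{thm:intertwine} the operator $\cS_k$ is intertwined via the fundamental solution $M(\tau)$ with $\bS_k(\tau)$, and $\bS_k(\tau)$ acting on quantum cohomology is (in the $z\to 0$ limit) multiplication by the Seidel element, so $z^{-1}\cS_k$ is tangent to $\cL_{X_\Sigma}$ at every point. Concretely, I would first show that $J$-function points $-zM(\tau,-z)1$ move along $\cL$ under $z^{-1}\cS_i$, and then assemble the $m$ commuting flows (commutativity by Corollary~\ref{cor:commuting}) into an $m$-parameter family of points on $\cL$ parametrized by $y = (y_1,\dots,y_m)$.

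Next I would solve the differential system $\partial_{y_i}\bbf = z^{-1}\cS_i\bbf$ explicitly on the Givental space. Because $\cS_i$ is a $(-u_i)$-twisted homomorphism that acts diagonally after restriction to $X_\Sigma^T$ via the operators $\Delta_i(k)e^{-zk\partial_\lambda}$, the system decouples at each fixed point, and the unique solution with the prescribed leading term $ze^{\sum u_i\log y_i/z}(1+O(Q))$ can be written down fixed-point-by-fixed-point: the factor $e^{-zk\partial_\lambda}$ produces the shift $\log y_i$ in the exponential prefactor, while the operators $\Delta_i(k)$ produce exactly the hypergeometric product $\prod_i \prod_{c=-\infty}^0(u_i+cz)/\prod_{c=-\infty}^{u_i\cdot d}(u_i+cz)$ after restriction — one multiplies the contributions of $\Delta_i$ over effective classes $d$ and reads off that the $Q^d y^d$-coefficient is the claimed ratio of falling products in the Chern roots. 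This identification, carried out componentwise on $H^*_\hT(X_\Sigma^T)_{\loc}$ and then matched with the stated formula for $I(y,z)$, shows that $I(y,-z)$ is precisely the integral curve through the point $-z\cdot1$ at $y=(1,\dots,1)$ (i.e. through $-zM(0,-z)1$, which lies on $\cL$). Since $\cL$ is preserved by the flows and contains the initial point, the whole curve $I(y,-z)$ lies on $\cL_{X_\Sigma}$.

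The subtle point — and the one I expect to be the main obstacle — is the convergence/formality bookkeeping needed to make "integral curve on $\cL$" meaningful. Since we do not assume $c_1(X_\Sigma)$ is nef, the naive mirror map $\tau(y)$ need not land in $H^{\le2}_T(X)$, so one genuinely needs the big quantum cohomology version of the shift operators developed in \S\ref{sec:shift}, and one must establish existence of a solution $\tau(y)$ and the correction factor $\Upsilon(y,z)$ with the right asymptotics — this is exactly Proposition~\ref{prop:tau_Upsilon} flagged in the introduction. I would handle this by an order-by-order induction in the $y^d Q^d$-adic filtration: at each step the differential equation $\partial_{y_i}\tau = S_i(\tau)$ together with $\partial_{y_i}\Upsilon = [z^{-1}\bS_i(\tau)]_+\Upsilon$ determines the next coefficient uniquely, and one checks inductively that $\Upsilon$ remains $H^*_T(X)[z]$-valued (polynomial, not merely Laurent, in $z$) using that $[z^{-1}\bS_i]_+$ is the non-negative part. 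The factorization $I(y,z) = zM(\tau(y),z)\Upsilon(y,z)$ then exhibits $I$ as a point of $\cL$ in the standard way (Birkhoff factorization à la Coates–Givental), completing the proof.

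A secondary technical checkpoint is the semi-negativity hypothesis: for each torus-invariant prime divisor the relevant cocharacter $k$ has $\lambda_j\cdot k\ge0$ for all $j$, so $E_k$ is semi-projective by Lemma~\ref{lem:E_semiproj} and all shift operators $\cS_i$ are defined integrally over $H^*_\hT(\pt)[\![Q]\!]$ without localization; this is what lets the Novikov-variable specialization and the formal-power-series manipulations above go through cleanly, and it is why the argument extends to the non-compact, non-semipositive case. I would note in passing that the $\hGamma$-compatibility of Proposition~\ref{prop:Gamma_integral_shift} gives an independent sanity check that the hypergeometric $\Gamma$-factors appearing in $I(y,z)$ are forced by the shift operators.
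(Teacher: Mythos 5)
Your proposal follows essentially the same route as the paper: verify by restriction to the torus-fixed points that $I$ satisfies $z\partial_{y_i}I(y,z)=\cS_i I(y,z)$, construct $\tau(y)$ and $\Upsilon(y,z)$ with the prescribed asymptotics by an order-by-order induction on the Novikov filtration (this is exactly Proposition \ref{prop:tau_Upsilon}), and conclude $I(y,z)=zM(\tau(y),z)\Upsilon(y,z)$ by uniqueness of solutions of the flow with that leading form, which places $I(y,-z)$ on the cone. One aside is inaccurate but immaterial: $I$ is not pinned down by an initial condition at $y=(1,\dots,1)$ — there it equals neither $-z\cdot 1$ nor $-zM(0,-z)1$, since the mirror map and $\Upsilon$ are nontrivial at $y=1$ when $c_1$ is not nef — rather it is pinned down by the uniqueness-with-prescribed-asymptotics statement you also invoke; likewise, in the induction each Novikov coefficient is solved from a single index $i_0$ with $u_{i_0}\cdot d_0>0$, and the remaining $m-1$ equations must then be recovered from the commutativity of the flows together with a fixed-point restriction argument, a consistency check your sketch glosses over but which is compatible with your stated plan.
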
 

We explain the meaning of the statement. 
The \emph{Givental cone} $\cL$ \cite{Givental:symplectic} is a subset of 
$H_{\hT}^*(X_\Sigma)_\loc[\![Q]\!]$ consisting 
of points of the form: 
\begin{equation} 
\label{eq:cone_point}
-z + \bt(z) + \sum_{i=0}^N
\sum_{n=0}^\infty  \sum_{d\in \Eff(X_\Sigma)} 
\frac{Q^d}{n!} 
\corr{\frac{\phi^i}{-z-\psi},\bt(\psi),\dots,\bt(\psi)}_{0,n+1,d}^{X,T} \phi_i 
\end{equation} 
with $\bt(z) \in H_{\hT}^*(X_\Sigma)[\![Q]\!] 
= H_T^*(X_\Sigma)[z][\![Q]\!]$. 
The Givental cone $\cL$ can be written as the 
graph of the differential of the genus-zero descendant Gromov-Witten potential, and 
encodes all genus-zero descendant Gromov-Witten invariants. 
Theorem \ref{thm:mirrorthm} says that 
$I(y,z)$ is of the form \eqref{eq:cone_point} 
for some $\bt(z) \in H_T^*(X_\Sigma)[z][\![Q]\!][\![\log y]\!]$ 
with $\bt(z)|_{Q=\log y=0} = 0$. 
For toric manifolds, the above $I$-function 
determines the Givental cone and hence 
all the genus-zero Gromov-Witten invariants completely. 

In this paper, we use an alternative description \cite{Givental:symplectic} 
of the Givental cone $\cL$. We can write $\cL$ as the union 
\[
\cL = \bigcup_{\tau\in H^*_T(X_\Sigma)[\![Q]\!]} z T_\tau 
\]
of the semi-infinite subspaces $T_\tau = 
M(\tau,-z) H_T(X_\Sigma)[z][\![Q]\!]$, 
where $M(\tau,-z)$ denotes the fundamental solution from 
Proposition \ref{prop:fundsol} with the sign of $z$ flipped. 
The subspace $T_\tau$ is a (common) tangent space to $\cL$ 
along $z T_\tau \subset \cL$.  
Therefore, it suffices to show that $I(y,z)$ can be written 
in the form 
\[
I(y,z) = z M(\tau(y),z) \Upsilon(y,z)
\]
for some $\tau(y) \in H^*_T(X_\Sigma)[\![Q]\!][\![\log y]\!]$ 
and $\Upsilon(y,z) \in H^*_T(X_\Sigma)[z][\![Q]\!][\![\log y]\!]$. 

\subsection{Proof}
The idea of the proof is as follows. Let $e_i$ denote 
the cocharacter $\Cstar \to T = (\Cstar)^m$ given 
by the inclusion of the $i$th factor. Let $\bS_i = \bS_{e_i}$, 
$\cS_i = \cS_{e_i}$ denote the corresponding shift 
operators. 
In view of Theorem \ref{thm:intertwine}, the shift operator 
$\cS_i$ defines a vector field on the Givental cone:  
\begin{equation} 
\label{eq:vf_cone}
\cL \ni \bbf \longmapsto z^{-1} \cS_i \bbf \in T_{\bbf} \cL. 
\end{equation} 
These vector fields define commuting flows 
by Corollary \ref{cor:commuting}. 
We will identify the $I$-function with an integral submanifold 
of these vector fields.

Consider the $\Cstar$-action on $X_\Sigma$ 
induced by the cocharacter $e_i\in \Hom(\Cstar,T)$. 
The minimal fixed component $F_{\min}$ 
for this $\Cstar$-action is the toric divisor $\{z_i = 0\}$.  
Let $E_i=E_{e_i}$ denote the associated bundle. 
For a fixed point $x\in X_\Sigma^T$, we set $d_i(x) = \sigma_x - \sigma_{\min} 
\in H_2(X_\Sigma,\Z)$, where $\sigma_x \in H_2^{\sec}(E_k)$ is the 
section \eqref{eq:fixed_point_section} of $E_i$ 
associated to $x$ and $\sigma_{\min}\in H_2^{\sec}(E_k)$ 
is the minimal section class of $E_i$. 
We write $u_j(x)\in H^2_T(\pt)$ 
for the restriction of $u_j$ to $x$. 
\begin{lemma} 
\label{lem:fixed_weight} 
With the notation as above, we have 
\[
u_j(x) \cdot e_i = \delta_{ij} - u_j \cdot d_i(x). 
\]
\end{lemma}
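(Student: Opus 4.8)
The plan is to compute both sides by passing to the section class $\sigma_x$ inside $E_i$ and comparing intersection numbers with the relative hyperplane class. First I would fix notation: write $z_i = 0$ for the toric divisor $F_{\min}$ of the $\Cstar$-action induced by $e_i$, let $\pi\colon E_i \to \PP^1$ be the associated bundle, and let $u_j$ also denote (by abuse) the divisor class on $E_i$ obtained from the fiberwise toric divisor $\{z_j = 0\}$. On $E_i$ the normal bundle of $F_j \times \PP^1$ (for $F_j$ a toric divisor meeting $x$) twists by $\cO_{\PP^1}(-\alpha\cdot e_i)$, as recorded in the proof of Theorem \ref{thm:intertwine}; the relevant weight here is $\alpha = \lambda_j$, so the class $u_j$ on $E_i$ restricted to the section $\sigma_x$ pairs as $u_j \cdot \sigma_x = u_j \cdot \sigma_{\min} - (\text{correction from the }\PP^1\text{-twist})$. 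The cleanest bookkeeping is via the standard presentation of $H^*(E_i)$: $E_i$ is itself (the total space of a line-bundle-twisted) toric variety, with one extra divisor class $p = \pi^*[\text{pt}]$ coming from $\PP^1$, and the fiberwise classes $u_1,\dots,u_m$ now satisfying a modified linear relation involving $p$. Concretely, the class $\hu_j := u_j + (\lambda_j\cdot e_i)\, p$ is the pullback of $u_j$ under a trivialization over one chart, and this is the combination whose restriction to every section $\sigma_x$ is constant equal to $u_j(x)$, the $T$-weight at the fixed point.

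Next I would unwind the definition $d_i(x) = \sigma_x - \sigma_{\min}$ and evaluate $u_j$ on it: $u_j \cdot d_i(x) = u_j\cdot\sigma_x - u_j\cdot\sigma_{\min}$. Using $\hu_j\cdot\sigma_x = u_j(x)$ and $\hu_j \cdot \sigma_{\min} = u_j(x_{\min})$ for a minimal fixed point $x_{\min}$, together with $p\cdot\sigma_x = 1$ for every section, we get $u_j\cdot\sigma_x = u_j(x) - \lambda_j\cdot e_i$ and $u_j\cdot\sigma_{\min} = u_j(x_{\min}) - \lambda_j\cdot e_i$, so $u_j\cdot d_i(x) = u_j(x) - u_j(x_{\min})$. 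Now the point is to identify $u_j(x_{\min})$ and relate it back to $u_j(x)\cdot e_i$. Since $F_{\min} = \{z_i = 0\}$ and the $T$-fixed points on $X_\Sigma$ are indexed by maximal cones, restriction of $u_j$ to a fixed point in $F_{\min}$ gives, by the relation in $\frI_2$, a linear combination of the $\lambda$'s; the $e_i$-weight of $u_j(x)$ — i.e. the pairing $u_j(x)\cdot e_i$ — is extracted by differentiating in the $i$th equivariant direction. Here I would invoke that the $\Cstar$-action via $e_i$ is the restriction of the ambient $(\Cstar)^m$-action to the $i$th factor, so $\lambda_k \cdot e_i = \delta_{ik}$; hence $u_j(x)\cdot e_i$ is precisely the coefficient of $\lambda_i$ in the weight $u_j(x)$.

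Assembling: from $u_j\cdot d_i(x) = u_j(x) - u_j(x_{\min})$, I would show $u_j(x_{\min}) \cdot e_i = \delta_{ij}$ and $u_j(x_{\min})$ differs from $u_j(x)$ in the $\lambda_i$-coefficient exactly by $u_j\cdot d_i(x)$ — this is the geometric content that moving from the fixed point $x$ to the minimal one $x_{\min}$ along the chain of $e_i(\Cstar)$-orbits (as in the proof of $\Eff(E_i)^{\sec} = \sigma_{\min} + \Eff(X)$) changes the section class by $-d_i(x)$ and correspondingly shifts the $i$th weight. Pairing the identity $u_j\cdot\sigma_x - u_j\cdot\sigma_{\min} = u_j(x) - u_j(x_{\min})$ against the $\lambda_i$-coefficient, and using $\lambda_k\cdot e_i = \delta_{ik}$ together with the Euler-sequence relation $\sum_k \lambda_k (u_k - \lambda_k) \in \frI_2$ restricted to $x$, yields $u_j(x)\cdot e_i + u_j\cdot d_i(x) = \delta_{ij}$, which is the claim. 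The main obstacle I anticipate is pinning down the sign and the exact form of the modified linear relation among $u_1,\dots,u_m,p$ in $H^2(E_i)$ — i.e. being careful that the $\PP^1$-twist enters as $\cO_{\PP^1}(-\lambda_j\cdot e_i)$ with the right sign, matching the convention $s\cdot(x,(v_1,v_2)) = (s^k x, (s^{-1}v_1, s^{-1}v_2))$ in the definition of $E_k$ — since every term in the final identity is a consequence of that bookkeeping. Everything else is a direct localization/intersection computation on the (toric) variety $E_i$.
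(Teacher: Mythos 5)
Your overall strategy (compare the degrees of the divisor class $\hu_j$ on the two section classes $\sigma_x$ and $\sigma_{\min}$, and use $d_i(x)=\sigma_x-\sigma_{\min}$) is the same as the paper's, but the execution has a genuine error at the central step. The twist of the divisor $\{z_j=0\}\times\PP^1\subset E_i$ along the section $\sigma_x$ is governed by the $e_i$-weight of the fixed-point restriction $u_j(x)$, not by $\lambda_j\cdot e_i=\delta_{ij}$: in general $u_j(x)\neq\lambda_j$ (their difference is $v_j(x)$, which is typically nonzero), so your identification ``$\alpha=\lambda_j$'' and the resulting class $\hu_j=u_j+(\lambda_j\cdot e_i)\,p$ are wrong. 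In fact, if $u_j+\delta_{ij}p$ had section-independent degree as you claim, the lemma itself would force $u_j\cdot d_i(x)=0$ for every fixed point $x$, which already fails for $X_\Sigma=\PP^1$. The correct computation (the one the paper does) is a localization on the section: the $\hT$-equivariant Poincar\'e dual $\hu_j$ of $\{z_j=0\}\times\PP^1$ restricts to $u_j(x)$ at $(x,[1,0])$ and to $u_j(x)+(u_j(x)\cdot e_i)z$ at $(x,[0,1])$, so $\hu_j\cdot\sigma_x=-u_j(x)\cdot e_i$, and similarly $\hu_j\cdot\sigma_{\min}=-\delta_{ij}$; subtracting and using that $\hu_j$ restricts to $u_j$ on fibers gives the statement. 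Your intermediate identities such as ``$\hu_j\cdot\sigma_x=u_j(x)$'' and ``$u_j\cdot d_i(x)=u_j(x)-u_j(x_{\min})$'' equate intersection numbers with elements of $H^2_T(\pt)$, so they cannot be literally correct; the number you want is $u_j(x)\cdot e_i$, i.e.\ the $\lambda_i$-coefficient, and extracting it is exactly where the localization input enters.

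Because of this, the final assembly does not close: the key claim that passing from $x$ to a minimal fixed point ``shifts the $i$th weight by $u_j\cdot d_i(x)$'' is precisely the content of the lemma and is asserted rather than proven (a chain-of-$e_i(\Cstar)$-orbits argument could in principle prove it, but you would then have to track the tangent weights and the multiples $a$ in $\sigma_x=\sigma_y+a[C]$, which you do not do). Also, the relation you invoke, $\sum_k\lambda_k(u_k-\lambda_k)\in\frI_2$, is not one of the relations of $H^*_T(X_\Sigma)$: the ideal $\frI_2$ is generated by $\sum_k\chi(b_k)(u_k-\lambda_k)$ with \emph{integer} coefficients $\chi(b_k)$, $\chi\in\Hom(\bN,\Z)$, not with coefficients $\lambda_k$. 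The parts you do have right --- $u_j(y)\cdot e_i=\delta_{ij}$ for $y\in F_{\min}$ (taking $y$ off $\{z_j=0\}$ when $j\neq i$), and $p\cdot\sigma_x=1$ --- are exactly the easy half of the paper's proof; what is missing is the correct evaluation of $\hu_j\cdot\sigma_x$.
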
 
\begin{proof} Consider the $\hT$-invariant 
divisor $\{z_j=0\} \times \PP^1$ in $E_i$ 
and let $\hu_j$ denote the $\hT$-equivariant 
Poincar\'{e} dual of the divisor. 
Then we have $\hu_j|_{(x,[1,0])} = u_j(x)$ 
and $\hu_j|_{(x,[0,1])} = u_j(x) + (u_j(x) \cdot e_i) z$. 
The localization formula gives 
\[
\hu_j \cdot \sigma_x =\frac{\hu_j|_{(x,[1,0])}}{z} + 
\frac{\hu_j|_{(x,[0,1])}}{-z} 
= - u_j(x) \cdot e_i. 
\]
Similarly we have 
$\hu_j \cdot \sigma_{\min} = -u_j(y) \cdot e_i$ 
for any $T$-fixed point $y$ in the divisor $F_{\min} = 
\{z_i = 0\}$. If $i\neq j$, taking $y$ away from 
$\{z_j = 0\}$, we get $u_j(y) =0$. 
If $i=j$, $u_j(y)\cdot e_i = 1$. 
Therefore $\hu_j \cdot \sigma_{\min}= - \delta_{ij}$. 
The conclusion follows. 
\end{proof}

\begin{lemma} 
\label{lem:I_flow} 
The $I$-function is an integral curve of the vector field \eqref{eq:vf_cone}, 
that is, for $i\in \{1,\dots,m\}$, we have 
\[
z\parfrac{}{y_i} I(y,z) = \cS_i I(y,z). 
\]
\end{lemma}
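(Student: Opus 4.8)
The plan is to verify the differential equation by computing both sides through the localization isomorphism $\iota^*\colon H_\hT^*(X_\Sigma)_{\loc}\xrightarrow{\cong} H_\hT^*(X_\Sigma^T)_{\loc}$, where the shift operator $\cS_i$ acts on each fixed component $F$ by the explicit formula $\Delta_F(e_i)\, e^{-z e_i \partial_\lambda}$ of Definition \ref{def:shift_Givental}. Since both the $I$-function and the prefactor $\Delta_F(e_i)$ factor over fixed points, the identity reduces to a componentwise check at each $T$-fixed point $x\in X_\Sigma^T$. So the first step is to write down $\iota_x^* I(y,z)$ explicitly: using $u_j|_x = u_j(x)$ this is
\[
z\, e^{\sum_j u_j(x)\log y_j/z}
\sum_{d\in\Eff(X_\Sigma)}
\left(\prod_{j=1}^m
\frac{\prod_{c=-\infty}^{0}(u_j(x)+cz)}{\prod_{c=-\infty}^{u_j\cdot d}(u_j(x)+cz)}\right)
Q^d\, y^{d}.
\]

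Next I would compute the effect of $z\,\partial/\partial y_i$ on this expression. The exponential prefactor contributes a factor of $u_i(x)$ (times $e^{\cdots}$), and differentiating $y^d = \prod_j y_j^{u_j\cdot d}$ pulls down $z\,(u_i\cdot d)/y_i$. Combining, $z\,\partial_{y_i}\iota_x^* I$ is $y_i^{-1}$ times the same series but with each term multiplied by $u_i(x) + z(u_i\cdot d)$. The key identity from Lemma \ref{lem:fixed_weight}, namely $u_i(x)\cdot e_i = \delta_{ii} - u_i\cdot d_i(x) = 1 - u_i\cdot d_i(x)$ (and more generally $u_j(x)\cdot e_i = \delta_{ij} - u_j\cdot d_i(x)$), is what lets me match this against the other side: it identifies the "shifted" numerator/denominator structure produced by $\cS_i$ with the derivative.

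For the right-hand side, I would apply $\bigoplus_F \Delta_F(e_i)\,e^{-z e_i\partial_\lambda}$ to $\iota^* I(y,z)$. The operator $e^{-z e_i\partial_\lambda}$ shifts $\lambda \mapsto \lambda - z e_i$, equivalently $u_j(x) \mapsto u_j(x) - z\,(u_j(x)\cdot e_i)$ at the fixed point $x$; this reindexes the infinite products in $I$. The prefactor $\Delta_F(e_i)$ — a product of ratios $\prod_c(\rho+\alpha+cz)/\prod_c(\rho+\alpha+cz)$ over the normal weights, together with the $Q^{\sigma_x-\sigma_{\min}} = Q^{d_i(x)}$ factor — then exactly accounts for (a) the shift in the range of the $c$-products caused by passing from $d$ to $d + d_i(x)$, (b) the Novikov shift $Q^d \to Q^{d+d_i(x)}$, and (c) the single leftover linear factor $u_i(x) + z(u_i\cdot d)$, all divided by $y_i$ via the $y^{d}$-monomial bookkeeping. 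This is the combinatorial heart of the argument: one must carefully track how the normal Chern roots $\rho_{F,\alpha,j}$ at $F=\{z_i=0\}$ versus at a general fixed point, together with the characters $\alpha$, reorganize the hypergeometric factors after the $\lambda$-shift. I expect the main obstacle to be precisely this bookkeeping — matching the telescoping products in $\Delta_F(e_i)$ against the change of summation variable $d\mapsto d+d_i(x)$ and the $\lambda$-shift, and confirming that everything outside the single factor $u_i(x)+z(u_i\cdot d)$ cancels — whereas the appearance of that one surviving factor, matching the $y_i$-derivative, is then immediate from Lemma \ref{lem:fixed_weight}. Once the componentwise identity holds at every $x\in X_\Sigma^T$, injectivity of $\iota^*$ gives the claim.
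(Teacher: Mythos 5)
Your proposal is correct and follows essentially the same route as the paper's proof: restrict to the isolated $T$-fixed points, apply $\Delta_x(e_i)e^{-z\partial_{\lambda_i}}$ to $I_x(y,z)$, use Lemma \ref{lem:fixed_weight} to convert $u_j(x)\cdot e_i$ into $\delta_{ij}-u_j\cdot d_i(x)$, and shift the summation variable by $d_i(x)$ so that the hypergeometric factors cancel except for the single factor $u_i(x)+(u_i\cdot d)z$ and the $y_i^{-1}y^{d_i(x)}Q^{d_i(x)}$ bookkeeping, which together reproduce $z\partial_{y_i}I_x$. The only cosmetic difference is that the paper carries out this cancellation explicitly rather than leaving it as anticipated bookkeeping, and note that since all $T$-fixed points are isolated the Chern roots $\rho$ in Definition \ref{def:shift_Givental} play no role (the relevant weights at $x$ are just the nonzero $u_j(x)$), while $\{z_i=0\}$ enters only as the minimal fixed component of the $e_i$-action through $Q^{d_i(x)}$.
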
 
\begin{proof} 
Note that all the $T$-fixed points on $X_\Sigma$ 
are isolated.  
Let $x\in X^T$ be a fixed point. It suffices to show that 
\[
z\parfrac{}{y_i} I_x(y,z) = \Delta_x(e_i) e^{- z\partial_{\lambda_i}} 
I_x(y,z) 
\]
where $I_x(y,z)$ is the restriction of the $I$-function to $x$ and 
\[
\Delta_x(e_i) = Q^{d_i(x)} \prod_{j=1}^m 
\frac{\prod_{c=-\infty}^0 (u_j(x) + cz )}
{\prod_{c=-\infty}^{-u_j(x) \cdot e_i} (u_j(x) + cz)}. 
\]
Using Lemma \ref{lem:fixed_weight}, we have 
\begin{multline*} 
\Delta_x(e_i) e^{-z \partial_{\lambda_i}} I_x(y,z) 
 = z e^{\sum_{j=1}^m u_j(x) \log y_j/z} 
e^{-\log y_i + \sum_{j=1}^m (u_j \cdot d_i(x))\log y_j } \\ 
\times  Q^{d_i(x)} 
\sum_{d\in H_2(X_\Sigma,\Z)} 
\left( \prod_{j=1}^m 
\frac{\prod_{c=-\infty}^0 (u_j(x) + cz )}
{\prod_{c=-\infty}^{-u_j(x) \cdot e_i} (u_j(x) + cz)} 
\frac{\prod_{c=-\infty}^{-u_j(x) \cdot e_i} (u_j(x) + cz)}
{\prod_{c=-\infty}^{u_j\cdot d - u_j(x) \cdot e_i} (u_j(x) + cz)} 
\right) 
Q^d y^d 
\end{multline*}
where $y^d = \prod_{j=1}^m y_j^{u_j \cdot d}$.  
Changing variables $d \to d - d_i(x)$ and using 
again Lemma \ref{lem:fixed_weight}, we find that 
this equals $z \parfrac{}{y_i} I(y,z)$. 
\end{proof} 

We identify the classical shift operators: 
\begin{notation} 
We set $v_i := u_i - \lambda_i \in H^2_T(X_\Sigma)$ 
and write $v_i(x)\in H^2_T(\pt)$ for the restriction of $v_i$ 
to a $T$-fixed point $x$. 
\end{notation} 
\begin{lemma} 
\label{lem:classical_shift}
Let $f(v,\lambda)$ 
be a cohomology class in $H^*_T(X_\Sigma)$ 
expressed as a polynomial in $v_1,\dots,v_m$ and 
$\lambda_1,\dots,\lambda_m$. 
When we write $\tau \in H^*_T(X_\Sigma)$ as a polynomial 
$\tau(v,\lambda)$ in $v_1,\dots,v_m$ and 
$\lambda_1,\dots,\lambda_m$, we have 
\[
\lim_{Q\to 0}\bS_i(\tau) f(v,\lambda) = u_i 
e^{(\tau(v,\lambda- e_i z) - \tau(v,\lambda))/z}
f(v, \lambda -z e_i)
\] 
where $\lambda - z e_i = (\lambda_1,\dots,\lambda_{i-1},\lambda_i - z, 
\lambda_{i+1},\dots,\lambda_m)$. In particular 
the classical Seidel elements are given by: 
\[
\lim_{Q\to 0} S_i(\tau) = u_i e^{-\parfrac{\tau(v,\lambda)}{\lambda_i}}. 
\]
\end{lemma}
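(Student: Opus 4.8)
The plan is to compute the $Q\to 0$ limit of the shift operator $\bS_i(\tau)$ directly from its definition via $\hT$-equivariant localization on $E_i$, using the fact that for $Q\to 0$ only the minimal section class $\sigma_{\min}$ survives. First I would recall that $\bS_i(\tau) = \Phi_{e_i}^{-1}\circ\tbS_{e_i}(\tau)$, and that by Theorem \ref{thm:intertwine} and Definition \ref{def:shift_Givental} the operator $\cS_i$ acts on the localized equivariant cohomology $H^*_\hT(X_\Sigma)_\loc$ (which, since all $T$-fixed points of $X_\Sigma$ are isolated, is $\bigoplus_x \Frac(H^*_\hT(\pt))$) by $\Delta_x(e_i)e^{-z\partial_{\lambda_i}}$ at the fixed point $x$. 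In the $Q\to 0$ limit, the only contribution to $\Delta_x(e_i)$ comes from $d_i(x)=0$, i.e.\ $x$ lies in the minimal fixed component $F_{\min}=\{z_i=0\}$; for such $x$ one has $u_j(x)\cdot e_i = \delta_{ij}$ by Lemma \ref{lem:fixed_weight} (with $d_i(x)=0$), so all but a single factor in the infinite products of $\Delta_x(e_i)$ cancels, leaving exactly $u_i(x)$ at $x\in F_{\min}$ and $0$ otherwise.

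Next I would transport this statement back from $\cS_i$ to $\bS_i$. By Theorem \ref{thm:intertwine} we have $\bS_i(\tau) = M(\tau)^{-1}\circ\cS_i\circ M(\tau)$, and in the limit $Q\to 0$ the fundamental solution $M(\tau)$ degenerates to the operator $e^{-\tau/z}$ acting by classical cup product (this is visible from Proposition \ref{prop:fundsol}: only the $d=0$, $n$-fold insertions of $\tau$ survive, assembling into $e^{\tau/z}\cup$, so $M(\tau)^{-1}\to e^{-\tau/z}\cup$). Thus $\lim_{Q\to 0}\bS_i(\tau)$ acts on a class $f$ by first applying $e^{\tau/z}$, then the twisted multiplication-by-$u_i$-on-$F_{\min}$-composed-with-$e^{-z\partial_{\lambda_i}}$ coming from $\cS_i$, then $e^{-\tau/z}$. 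Writing everything out at a fixed point $x$, the $e^{-z\partial_{\lambda_i}}$ shifts $\lambda_i\mapsto\lambda_i - z$ throughout, so $\tau(v,\lambda)$ in the prefactor becomes $\tau(v,\lambda - e_iz)$ while the trailing $e^{-\tau/z}$ contributes $-\tau(v,\lambda)/z$; combined with the restriction-to-$F_{\min}$ weight, this gives the claimed formula
\[
\lim_{Q\to 0}\bS_i(\tau)f(v,\lambda) = u_i\, e^{(\tau(v,\lambda - e_iz)-\tau(v,\lambda))/z}\, f(v,\lambda - ze_i).
\]

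For the Seidel element statement, I would take $z\to 0$ in the formula just obtained with $f=1$: the exponent $(\tau(v,\lambda-e_iz)-\tau(v,\lambda))/z$ tends to $-\partial\tau(v,\lambda)/\partial\lambda_i$, giving $\lim_{Q\to 0}S_i(\tau)=u_i e^{-\partial\tau(v,\lambda)/\partial\lambda_i}$; one should note that this expression lies in $H^*_T(X_\Sigma)$ because the derivative is evaluated after expressing $\tau$ in the chosen generators $v_j,\lambda_j$, and multiplication by $u_i$ is the classical product. The main obstacle I anticipate is the careful bookkeeping of \emph{where} the restriction to $F_{\min}$ sits relative to the $\lambda$-shift and relative to the conjugation by $M(\tau)$: one must check that the class $u_i$ appearing is genuinely $u_i\in H^2_T(X_\Sigma)$ and not merely its restriction to $F_{\min}$, which follows because $u_i$ vanishes on the complementary fixed points anyway (those with $d_i(x)\neq 0$ contribute $0$ in the limit), so the fixed-point data $\{u_i(x)\cdot[x\in F_{\min}]\}_x$ is exactly the restriction of the global class $u_i$. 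A secondary subtlety is verifying that the $(-e_i)$-twisted nature of $\bS_i$ (equation \eqref{eq:twisted_hom}) is consistent with the $\lambda\mapsto\lambda - ze_i$ substitution in the final formula, which it is by construction of $\Phi_{e_i}^{-1}$.
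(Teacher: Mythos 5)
Your proposal is essentially the paper's own proof: it uses the intertwining relation $\cS_i\circ M(\tau)=M(\tau)\circ\bS_i(\tau)$ from Theorem \ref{thm:intertwine}, the limit $\lim_{Q\to 0}M(\tau)=e^{\tau/z}$, and the fixed-point computation that $\Delta_x(e_i)|_{Q\to 0}$ equals $u_i(x)$ for $x$ in $F_{\min}=\{z_i=0\}$ and vanishes elsewhere, exactly as in the paper. The only point you gloss over is why $e^{-z\partial_{\lambda_i}}$ shifts only the explicit $\lambda$'s and leaves the $v_j$'s untouched: at a fixed point $x\in F_{\min}$ the restriction $v_j(x)$ contains no $\lambda_i$, since $v_j(x)\cdot e_i=u_j(x)\cdot e_i-\delta_{ij}=0$ by Lemma \ref{lem:fixed_weight} with $d_i(x)=0$ --- a one-line fact the paper states explicitly and which is needed for the global formula in $(v,\lambda)$ to restrict correctly at each fixed point.
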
 
\begin{proof} 
Recall from Theorem \ref{thm:intertwine} 
that we have $\cS_i \circ M(\tau) = M(\tau) \circ \bS_i(\tau)$. 
Since $\lim_{Q \to 0} M(\tau) = e^{\tau/z}$, we have 
\[
\lim_{Q \to 0} \bS_i(\tau) f(v,\lambda)= 
e^{-\tau/z} 
\left(\lim_{Q \to 0}\cS_i \right) e^{\tau/z} f(v,\lambda). 
\]
By definition of $\cS_i$, this vanishes when restricted 
to a fixed point outside of the minimal fixed component 
$\{z_i=0\}$ with respect to $e_i$. 
On the other hand, for any $T$-fixed point $x$ in $\{z_i=0\}$, 
Lemma \ref{lem:fixed_weight} implies that 
$u_j(x) \cdot e_i = \delta_{ij}$, 
$v_j(x) \cdot e_i = 0$ and thus: 
\begin{align*} 
\lim_{Q\to 0} \bS_i(\tau) f(v,\lambda)\Bigr|_{x} 
& = e^{-\tau(v(x),\lambda)/z}u_i(x) e^{-z \partial_{\lambda_i}} 
\left[ 
e^{\tau(v(x),\lambda)/z} f(v(x),\lambda) \right] \\
& = u_i(x) e^{(\tau(v(x),\lambda-e_i z) - \tau(v(x),\lambda))/z }
f(v(x) , \lambda-e_i z) 
\end{align*} 
where we set $v(x) = (v_1(x),\dots,v_m(x))$. The conclusion 
follows. 
\end{proof} 

\begin{lemma} 
\label{lem:restriction_linearcomb} 
Let $x$ be a $T$-fixed point on $X_\Sigma$. The restriction $u_j(x)$ 
is a linear combination of $\lambda_i$ such that $x$ does not 
lie on the divisor $\{z_i=0\}$. 
\end{lemma}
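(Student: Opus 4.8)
The plan is to reduce the statement to the linear algebra of the fan, using Lemma~\ref{lem:fixed_weight} together with the description of the minimal fixed component. Recall first that, since $\Sigma$ is smooth and $|\Sigma|$ is full-dimensional, the $T$-fixed points of $X_\Sigma$ are isolated and correspond bijectively to the maximal cones of $\Sigma$. For a $T$-fixed point $x$ put $S(x) := \{\, i : x \in \{z_i = 0\}\,\}$; by smoothness and maximality of the associated cone, $\{b_i\}_{i \in S(x)}$ is a $\Z$-basis of $\bN$, so $|S(x)| = D := \dim_\C X_\Sigma$. The cocharacters $e_i$ with $i \in S(x)$ are then distinct coordinate vectors of $\Lie(T) = \C^m$, hence linearly independent, and the annihilator in $\Lie(T)^* \cong H^2_T(\pt)$ of the subspace they span is precisely $\bigoplus_{i \notin S(x)} \C\,\lambda_i$.

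The key observation is that for every $i \in S(x)$ the point $x$ lies in the minimal fixed component $F_{\min} = \{z_i = 0\}$ of the $\Cstar$-action induced by $e_i$ — this identification of $F_{\min}$ is recorded just before Lemma~\ref{lem:fixed_weight}. Since section classes are constant along the connected set $F_{\min}$, the section class $\sigma_x$ of $E_{e_i}$ attached to $x$ equals the minimal section class $\sigma_{\min}$, whence $d_i(x) = \sigma_x - \sigma_{\min} = 0$, and Lemma~\ref{lem:fixed_weight} specializes to
\[
u_j(x)\cdot e_i = \delta_{ij} \qquad \text{for all } i \in S(x).
\]
Since also $\lambda_j \cdot e_i = \delta_{ij}$, the degree-one class $v_j(x) := u_j(x) - \lambda_j \in H^2_T(\pt)$ pairs to zero with every $e_i$, $i \in S(x)$, so by the first paragraph it is a $\Z$-linear combination of the classes $\lambda_i$ with $i \notin S(x)$. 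When moreover $x \notin \{z_j = 0\}$, the class $\lambda_j$ is itself one of these generators, so $u_j(x) = v_j(x) + \lambda_j$ is such a combination as well; this is the assertion.

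I do not expect a real obstacle: once the fixed-point/cone dictionary and the identification of $F_{\min}$ are in place, the argument is a two-line deduction, the only thing worth checking being the vanishing $d_i(x) = 0$. As a cross-check — and a route bypassing Lemma~\ref{lem:fixed_weight} — one may argue directly from the presentation $H^*_T(X_\Sigma) \cong H^*_T(\pt)[u_1,\dots,u_m]/(\frI_1+\frI_2)$: for $j \notin S(x)$ the Stanley--Reisner relation $u_j\prod_{i \in S(x)} u_i \in \frI_1$ (valid because the $D+1$ rays $\{b_i\}_{i\in S(x)}\cup\{b_j\}$ do not span a cone) together with the fact that $\prod_{i \in S(x)} u_i$ restricts at $x$ to the $T$-equivariant Euler class $e_T(T_xX_\Sigma)\neq 0$ forces $u_j(x) = 0$; solving the linear equations $\sum_{i}\chi(b_i)(u_i(x)-\lambda_i) = 0$, $\chi\in\Hom(\bN,\Z)$, against the basis of $\Hom(\bN,\Z)$ dual to $\{b_i\}_{i \in S(x)}$ then puts the remaining restrictions $u_i(x)$, $i \in S(x)$, into the required form.
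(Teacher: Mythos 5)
Your main argument is correct, and it reaches the conclusion by a genuinely different route than the paper. You argue geometrically: for each $i$ with $x\in\{z_i=0\}$ you note that $x$ lies on the minimal fixed component $F_{\min}=\{z_i=0\}$ of the $e_i$-action, so $\sigma_x=\sigma_{\min}$ and $d_i(x)=0$, whence Lemma~\ref{lem:fixed_weight} gives $u_j(x)\cdot e_i=\delta_{ij}$ and therefore $v_j(x)\cdot e_i=0$; duality of the bases $\{e_i\}$, $\{\lambda_i\}$ then forces $v_j(x)$ into the span of the $\lambda_i$ with $x\notin\{z_i=0\}$. (The paper invokes exactly this specialization of Lemma~\ref{lem:fixed_weight}, but in the proof of Lemma~\ref{lem:classical_shift}, not here.) The paper's own proof is instead purely algebraic and coincides with your closing ``cross-check'': $u_i(x)=0$, i.e.\ $v_i(x)=-\lambda_i$, for $i$ with $x\notin\{z_i=0\}$, and the relations $\sum_{i=1}^m\chi(b_i)v_i=0$, $\chi\in\Hom(\bN,\Z)$, determine the remaining $v_i(x)$ as combinations of those $\lambda_i$. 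Your route needs slightly heavier input (the identification of $F_{\min}$ and the vanishing $d_i(x)=0$), but it explains the statement conceptually: the classes $v_j(x)$ are characterized by annihilating the cocharacters of the cone through $x$.

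One caveat concerns the statement itself rather than your argument. As literally written, with $u_j(x)$, the lemma fails for indices $j$ with $x\in\{z_j=0\}$: already for $X_\Sigma=\PP^1$ the fixed point $x$ on $\{z_1=0\}$ has $u_1(x)=\lambda_1-\lambda_2$, which is not a multiple of $\lambda_2$. What the paper's proof actually establishes, and what the application in the proof of Proposition~\ref{prop:tau_Upsilon} uses, is the statement for $v_j(x)=u_j(x)-\lambda_j$, for every $j$ --- and that is precisely what your second paragraph proves in full. The only weak spots are your attempts to return to $u_j(x)$: the clause ``this is the assertion'' covers only the indices $j$ with $x\notin\{z_j=0\}$, where $u_j(x)=0$ makes the claim trivial, and the final sentence of your cross-check overstates, since for $i$ with $x\in\{z_i=0\}$ solving the relations gives $u_i(x)=\lambda_i+v_i(x)$ with $v_i(x)$ in the allowed span, so $u_i(x)$ itself contains $\lambda_i$ and is not of the required form; it is $v_i(x)$ that is.
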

\begin{proof} 
Note that if $x$ does not lie on the divisor $\{z_i=0\}$, we have 
$u_i(x) = 0$ and thus $v_i(x) = -\lambda_i$. This together with 
the linear relation $\sum_{i=1}^m \chi(b_i) v_i =0$, $\chi \in \Hom(\bN,\Z)$ 
determines $v_1(x),\dots,v_m(x)$ uniquely. This implies the conclusion. 
\end{proof} 

Let $\ovcL= \cL|_{z\to -z}$ denote the Givental cone with the sign of $z$ flipped. 
By the description in \S \ref{subsec:mirrorthm}, we have 
a parametrization of the Givental cone $\ovcL$ 
by $(\tau,\Upsilon) \in H^*_T(X) \times H^*_\hT(X) 
= H^*_T(X)\times H^*_T(X)[z]$ as: 
\[
(\tau, \Upsilon) \longmapsto z M(\tau,z) \Upsilon \in \ovcL. 
\]
The vector field \eqref{eq:vf_cone} on $\ovcL$ corresponds to 
the following vector field on $H^*_T(X) \times H^*_T(X)[z]$: 
\[
(\bV_i)_{\tau,\Upsilon} = (S_i(\tau), [z^{-1} \bS_i(\tau)]_+ \Upsilon)  
\]
where $S_i(\tau)$ is the Seidel element in Definition \ref{def:Seidel_elem} 
and $[\cdots]_+$ means the projection to the polynomial part in $z$, 
i.e.~$[z^{-1} \bS_i(\tau)]_+ \Upsilon 
= z^{-1} \bS_i(\tau) \Upsilon - z^{-1} S_i(\tau)\star_\tau\Upsilon$. 
In fact, if we have a curve $t \mapsto (\tau(t), \Upsilon(t))$ with 
$\tau'(0) = S_i(\tau(0))$, $\Upsilon'(0) = [z^{-1}\bS_i(\tau(0))]_+ \Upsilon(0)$, 
the corresponding curve $\bbf(t)=z M(\tau(t),z) \Upsilon(t)$ on $\ovcL$ 
satisfies 
\begin{align*} 
\bbf'(0) & = 
M(\tau(0),z) \left( S_i(\tau(0)) \star_{\tau(0)}\Upsilon(0) \right) + 
z M(\tau(0),z) [z^{-1} \bS_i(\tau(0))]_+ \Upsilon(0)  \\ 
& = M(\tau(0),z) \bS_i(\tau(0)) \Upsilon(0) 
 = z^{-1} \cS_i \bbf(0) 
\end{align*} 
where we used $z \partial_i M(\tau,z) = M(\tau,z) (\phi_i\star_\tau)$ 
in the first line and Theorem \ref{thm:intertwine} in the second line. 
Since the vector fields \eqref{eq:vf_cone} commute each other, 
the corresponding vector fields $\bV_i$, $i=1,\dots,m$ 
also commute each other.  
In what follows, we show the existence of an integral curve 
for the vector field $\bV_i$ with prescribed asymptotics. 

\begin{proposition} 
\label{prop:tau_Upsilon} 
There exist unique functions 
\[
\tau(y) \in H^*_T(X_\Sigma)[\![Q]\!][\![\log y]\!] 
\quad \text{and} \quad 
\Upsilon(y,z) \in H^*_T(X_\Sigma)[z][\![Q]\!][\![\log y]\!]
\] 
which are of the form 
\begin{align*} 
\tau(y) &= \sum_{i=1}^m u_i \log y_i + 
\sum_{d\in \Eff(X_\Sigma), d\neq 0} 
Q^d y^d \tau_d \\ 
\Upsilon(y,z) & = 1 + \sum_{d\in \Eff(X_\Sigma), d\neq 0} 
Q^d y^d \Upsilon_d 
\end{align*} 
with $y^d = \prod_{j=1}^m y_j^{u_j \cdot d}$ and 
give an integral curve for the vector field $\bV_i$: 
\[
\parfrac{\tau(y)}{y_i} = S_i(\tau(y)) \quad \text{and} \quad 
\parfrac{\Upsilon(y,z)}{y_i} = 
\left[z^{-1} \bS_i(\tau(y))\right]_+ \Upsilon(y,z)  
\]
for all $1\le i\le m$. 
\end{proposition}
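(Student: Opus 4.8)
The plan is to solve the two equations simultaneously by recursion in the Novikov variable. Since $\Eff(X_\Sigma)$ is a sharp, finitely generated monoid, the relation ``$d'\le d$ iff $d-d'\in\Eff(X_\Sigma)$'' is a well-founded partial order with finite lower sets, so it suffices to determine the coefficients $\tau_d$ and $\Upsilon_d$ one $d$ at a time. It is convenient to pass to the variables $q_i=\log y_i$, so that $y^d=\exp(\sum_j (u_j\cdot d)q_j)$ and $\partial/\partial y_i=e^{-q_i}\partial/\partial q_i$. The first step is to normalise the two equations: applying the divisor equation (Remark~\ref{rem:div_shift}) with $h=\tau^{(0)}:=\sum_j u_j\log y_j$ (which plays the role of a class in $H^2_T(X_\Sigma)$, with scalar coefficients $\log y_j$), and using that $u_j|_x\cdot e_i=\delta_{ij}$ on the minimal fixed component $F_{\min}=\{z_i=0\}$ (Lemma~\ref{lem:fixed_weight}), one sees that both $S_i(\tau)$ and $\bS_i(\tau)$ carry an overall factor $e^{-h(e_i)}=e^{-q_i}$ which exactly cancels the $e^{-q_i}$ produced by $\partial/\partial y_i$, and that all the remaining Gromov--Witten data lives in honest (non-localised) cohomology because $E_i$ is semi-projective.

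For the base case $d=0$ I would use Lemma~\ref{lem:classical_shift}. Writing $\tau^{(0)}=\sum_j(v_j+\lambda_j)\log y_j$ gives $\partial_{\lambda_i}\tau^{(0)}=\log y_i$ and $\tau^{(0)}(v,\lambda-e_iz)-\tau^{(0)}(v,\lambda)=-z\log y_i$, so that $\lim_{Q\to0}S_i(\tau^{(0)})=u_ie^{-q_i}=\partial_{y_i}\tau^{(0)}$ and $\lim_{Q\to0}\bS_i(\tau^{(0)})1=u_ie^{-q_i}=\lim_{Q\to0}S_i(\tau^{(0)})$; hence, modulo positive Novikov degree, $[z^{-1}\bS_i(\tau^{(0)})]_+1=z^{-1}u_ie^{-q_i}-z^{-1}u_ie^{-q_i}=0=\partial_{y_i}1$. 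Thus $(\tau^{(0)},1)$ solves the system to zeroth order.

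For the inductive step, fix $d\ne0$ and assume $\tau_{d'},\Upsilon_{d'}$ are known for all $d'<d$. Extracting the $Q^dy^d$-coefficient of the two normalised equations, the divisor equation and the exponential shape of the classical Seidel operator guarantee that the right-hand side involves only $\{\tau_{d'},\Upsilon_{d'}\}_{d'<d}$ together with one term linear in $\tau_d$ (resp.\ $\Upsilon_d$); concretely, after cancelling $e^{-q_i}y^d$, the equations take the form
\[
\bigl((u_i\cdot d)\,\mathrm{id}+u_i\partial_{\lambda_i}\bigr)\tau_d=R_{i,d},\qquad
\bigl((u_i\cdot d)\,\mathrm{id}-z^{-1}u_i(e^{-z\partial_{\lambda_i}}-1)\bigr)\Upsilon_d=R'_{i,d},
\]
where the perturbations come from linearising $u_ie^{-\partial_{\lambda_i}\tau}$ (resp.\ the classical $\bS_i(\tau)$) at $\tau=\tau^{(0)}$. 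Note that, although $\partial_{\lambda_i}$ alone does not descend to $H^*_T(X_\Sigma)$, the operators $u_i\partial_{\lambda_i}$ and $u_i(e^{-z\partial_{\lambda_i}}-1)$ do (equivalently: they agree with the geometrically defined $\bS_i$). The crucial point is to invert the operator on the left. Since $u_1,\dots,u_m$ span $H^2(X_\Sigma,\Q)$ and $H_2(X_\Sigma,\Z)$ is torsion free, there is an index $i_0$ with $u_{i_0}\cdot d\ne0$, and I claim $(u_{i_0}\cdot d)\,\mathrm{id}+u_{i_0}\partial_{\lambda_{i_0}}$ is invertible. Indeed it preserves cohomological degree, hence restricts to each finite-dimensional piece $H^{\le 2p}_T(X_\Sigma)$; on that piece $u_{i_0}\partial_{\lambda_{i_0}}$ has no nonzero eigenvalue — one checks this on $T$-fixed point restrictions: for $x\notin\{z_{i_0}=0\}$ the operator restricts to multiplication by $u_{i_0}\cdot d$, while for $x\in\{z_{i_0}=0\}$, Lemma~\ref{lem:restriction_linearcomb} shows $u_{i_0}|_x$ is a linear form not involving $\lambda_{i_0}$, so $u_{i_0}|_x\,\partial_{\lambda_{i_0}}$ cannot have a nonzero eigenvalue on $\C[\lambda]$ — so it is nilpotent on $H^{\le 2p}_T(X_\Sigma)$, and $(u_{i_0}\cdot d)\,\mathrm{id}+\text{nilpotent}$ is invertible with inverse a locally finite geometric series. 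The same applies to the $\Upsilon_d$-equation, noting additionally that $z^{-1}u_{i_0}(e^{-z\partial_{\lambda_{i_0}}}-1)=-u_{i_0}\partial_{\lambda_{i_0}}+\tfrac z2 u_{i_0}\partial_{\lambda_{i_0}}^2-\cdots$ preserves polynomiality in $z$. The $i_0$-equations then determine $\tau_d\in H^*_T(X_\Sigma)$ and $\Upsilon_d\in H^*_T(X_\Sigma)[z]$ uniquely, which already gives the uniqueness statement. Finally, for existence one must check that the $(\tau,\Upsilon)$ so built satisfies the equations for every $i$, not just $i_0$; this follows by the standard argument for commuting flows, using that the vector fields $\bV_i$ commute (established above): inductively, once all equations hold in degrees $<d$ and the $i_0$-equation holds in degree $d$, commutativity forces the remaining degree-$d$ equations.

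I expect the main obstacle to be exactly this linear-algebra step: verifying that the degree-$d$ part of the right-hand side is genuinely a lower-order perturbation of multiplication by the scalar $u_i\cdot d$ — so that the corrections from the minimal-section moduli on $E_i$ and from the exponential $e^{(\tau(v,\lambda-e_iz)-\tau(v,\lambda))/z}$ are captured precisely by $u_i\partial_{\lambda_i}$, resp.\ $z^{-1}u_i(e^{-z\partial_{\lambda_i}}-1)$, modulo strictly lower Novikov degree — together with the bookkeeping needed to keep $\Upsilon_d$ polynomial in $z$ and $\tau_d$ an honest equivariant class.
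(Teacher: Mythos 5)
Your overall strategy is the same as the paper's (normalize with the divisor equation, induct on Novikov degree using Lemma \ref{lem:classical_shift}, solve a linearized equation for the new coefficient for one index $i_0$, and use commutativity of the flows $\bV_i$ to get the remaining equations), but the key linear-algebra step as you state it is wrong. The operator $u_{i_0}\partial_{\lambda_{i_0}}$ is \emph{not} nilpotent on the graded pieces of $H^*_T(X_\Sigma)$: already $u_{i_0}\partial_{\lambda_{i_0}}(u_{i_0})=u_{i_0}$, so $u_{i_0}$ is an eigenvector with eigenvalue $1$. The source of the error is the fixed-point check: for $x\in\{z_{i_0}=0\}$ the restriction $u_{i_0}|_x$ \emph{does} involve $\lambda_{i_0}$, with coefficient exactly $1$ (this is Lemma \ref{lem:fixed_weight}: $u_{i_0}(x)\cdot e_{i_0}=1$ since $d_{i_0}(x)=0$); what Lemma \ref{lem:restriction_linearcomb} gives, as it is actually used in the paper, is that the restrictions $v_j(x)$ avoid $\lambda_{i_0}$ when $x$ lies on $\{z_{i_0}=0\}$, which is what justifies commuting restriction with $\partial_{\lambda_{i_0}}$ (cf.\ \eqref{eq:restriction_differentiation}), not the vanishing of the $\lambda_{i_0}$-dependence of $u_{i_0}|_x$. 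Writing $u_{i_0}(x)=\lambda_{i_0}+\ell$ with $\ell$ free of $\lambda_{i_0}$, the operator at such a fixed point is conjugate to $\mu\partial_\mu$, whose spectrum on polynomials is $\{0,1,2,\dots\}$; so the operator you need to invert has spectrum $\{u_{i_0}\cdot d+k:k\in\Z_{\ge 0}\}$ rather than the single value $u_{i_0}\cdot d$.

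Consequently your choice of $i_0$ with merely $u_{i_0}\cdot d\neq 0$ is not enough: if $u_{i_0}\cdot d$ were a negative integer the operator would be non-invertible (and the same defect would break the commutativity step, where one must conclude that the obstruction class $\alpha$ satisfying $(u_{i_0}\cdot d)\alpha+u_{i_0}\partial_{\lambda_{i_0}}\alpha=0$ vanishes). What is actually needed, and what the paper uses, is $u_{i_0}\cdot d>0$; this follows from the \emph{effectivity} of $d$ together with the fact that a K\"ahler class is a non-negative combination of the $u_i$ — an input your ``spanning plus torsion-free'' argument does not provide. Once $u_{i_0}\cdot d>0$ is secured, your invertibility conclusion is correct (solve downward from the top $\lambda_{i_0}$-degree, exactly as in the paper, both for $\tau_{d}$ and for the $z$-polynomial variant with $z^{-1}u_{i_0}(e^{-z\partial_{\lambda_{i_0}}}-1)$), and the remaining steps — well-definedness of $u_i\partial_{\lambda_i}$ on the quotient ring, the base case, and the commuting-flow argument for the equations with $i\neq i_0$ — coincide with the paper's proof, with the caveat that the last step again requires the same injectivity statement, verified by restricting to fixed points with the corrected use of Lemma \ref{lem:restriction_linearcomb}.
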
 
\begin{proof} 
Write $\tau(y) = \sum_{j=1}^m u_j \log y_j + \tau'$. 
The divisor equation in Remark \ref{rem:div_shift} gives 
\[
\bS_i(\tau(y)) = y_i^{-1} \bS_i(\tau';Qy). 
\]
where $\bS_i(\sigma;Qy)$ is obtained from $\bS_i(\sigma)$ by 
replacing $Q^d$ with $Q^dy^d$. 
Therefore we need to solve the following differential equations: 
\begin{equation} 
\label{eq:flow_aftersubtractinglog}
y_i \parfrac{\tau'}{y_i} = S_i(\tau'; Qy) - u_i \quad 
\text{and} \quad 
y_i \parfrac{\Upsilon}{y_i} = \left[z^{-1} \bS_i(\tau'; Qy)  \right]_+\Upsilon. 
\end{equation} 
We expand 
\[
\tau' = \sum_{d\in \Eff(X_\Sigma), d\neq 0} \ttau_d(y) Q^d, \qquad 
\Upsilon = \sum_{d\in \Eff(X_\Sigma)} \tUpsilon_d(y) Q^d 
\]
with $\tUpsilon_0(y) =1$ 
and solve for the coefficients $\ttau_d(y)$, $\tUpsilon_d(y)$ 
recursively. Note that the equation \eqref{eq:flow_aftersubtractinglog} 
holds true mod $Q$ by Lemma \ref{lem:classical_shift}. 

First we solve for $\tau'$. Choose a K\"{a}hler class $\omega$ 
such that $\omega \cdot d_1 = \omega \cdot d_2$ for 
$d_1,d_2\in \Eff(X_\Sigma)$ 
if and only if $d_1 = d_2$. This defines a positive real grading on the Novikov ring 
$\C[\![Q]\!]$ such that $\deg Q^d = \omega \cdot d$. 
Take $d_0 \in \Eff(X_\Sigma) \setminus\{0\}$. 
Suppose by induction that there exist 
$\ttau_d$ for all $d$ with $\omega \cdot d < \omega \cdot d_0$ 
such that $\ttau_d=\tau_d y^d$ for some $\tau_d \in H^*_T(X)$ 
and that $\tau' = \sum_{\omega \cdot d <\omega\cdot d_0} \ttau_d Q^d$ 
satisfies the differential equation \eqref{eq:flow_aftersubtractinglog} 
modulo terms of degree $\ge \omega \cdot d_0$. 
We write $\tau_d$ as a polynomial in $v_1,\dots,v_m$ 
and $\lambda_1,\dots,\lambda_m$. 
Comparing the coefficients of $Q^{d_0}$ 
of the differential equation, we obtain 
using Lemma \ref{lem:classical_shift} that: 
\[
y_i \parfrac{\ttau_{d_0}}{y_i}+ u_i \parfrac{\ttau_{d_0}}{\lambda_i} 
= \left(\begin{array}{l} 
\text{an expression in $\ttau_d$} \\ 
\text{with $\omega \cdot d<\omega \cdot d_0$} 
\end{array}\right). 
\]
Here the right-hand side is of the form  
$g_i(v,\lambda) y^{d_0}$ by induction hypothesis, 
where $g_i(v,\lambda)$ is a polynomial in $v_1,\dots,v_m$ 
and $\lambda_1,\dots,\lambda_m$. Setting $\ttau_{d_0} 
= \tau_{d_0} y^{d_0}$, we obtain 
\[
(u_i \cdot d_0) \tau_{d_0} + (v_i + \lambda_i) 
\parfrac{\tau_{d_0}}{\lambda_i} = g_i(v,\lambda). 
\]
The K\"{a}hler class can be written as a non-negative linear 
combination of $u_i$, and thus there exists $i_0$ such that 
$u_{i_0} \cdot d_0>0$. Then we can solve for 
the polynomial $\tau_{d_0}=\tau_{d_0}(v,\lambda)$ 
from the above equation with $i=i_0$ 
recursively from the highest order term in $\lambda_{i_0}$. 
Setting $\tau(y)= \sum_i u_i \log y_i + 
\sum_{\omega \cdot d\le \omega \cdot d_0} \tau_d y^d Q^d$, 
we have 
\[
\parfrac{\tau(y)}{y_i} \equiv S_i(\tau(y))
\]
modulo terms of degree $\ge \omega\cdot d_0$ 
for $i\neq i_0$ and modulo terms of degree $> \omega \cdot d_0$ 
for $i=i_0$. The commutativity of the flow implies that we have 
for $i\neq i_0$, 
\begin{align}
\label{eq:commutativity_check}
\begin{split} 
\parfrac{}{y_{i_0}} \left(\parfrac{\tau}{y_i} - S_i(\tau(y))\right) 
& = \parfrac{^2 \tau(y)}{y_i \partial y_{i_0}} 
- (d_{\parfrac{\tau(y)}{y_{i_0}}}S_i)(\tau(y)) \\ 
& \equiv \parfrac{S_{i_0}(\tau(y))}{y_i} 
- (d_{S_{i_0}(\tau(y))}S_i) (\tau(y)) \\ 
& = (d_{\parfrac{\tau(y)}{y_i}}S_{i_0})(\tau(y))  
- (d_{S_i (\tau(y))}S_{i_0}) (\tau(y))  \\ 
& = (d_{\parfrac{\tau(y)}{y_i} - S_i(\tau(y))} S_{i_0})(\tau(y))
\end{split} 
\end{align} 
modulo terms of degree $> \omega \cdot d_0$. 
Using the divisor equation again, we have 
\[
y_i \left(\parfrac{\tau(y)}{y_i} - S_i(\tau(y))\right) = 
u_i + y_i \parfrac{\tau'}{y_i} - S_i(\tau';Qy). 
\]
Modulo terms of degree $>\omega \cdot d_0$, 
this is $\alpha (Qy)^{d_0}$ for some $\alpha = \alpha(v,\lambda) 
\in H^*_T(X)$. 
Now the coefficient of $Q^{d_0}$ of 
equation \eqref{eq:commutativity_check} gives 
(by Lemma \ref{lem:classical_shift}): 
\[
(u_{i_0} \cdot d_0) \alpha + u_{i_0} \parfrac{\alpha}{\lambda_{i_0}} = 0.  
\]
We want to show that $\alpha=0$ as a cohomology class. 
Consider the restriction $\alpha(x)$ of $\alpha$ to a $T$-fixed point 
$x\in X_\Sigma$. If $x$ lies in the divisor $\{z_{i_0}=0\}$, 
$v_j(x)\in H_T^2(\pt)$ is a linear combination of $\lambda_{j'}$ 
with $j' \neq i_0$ by Lemma \ref{lem:restriction_linearcomb}. 
Thus 
\begin{equation} 
\label{eq:restriction_differentiation}
\left.\parfrac{\alpha}{\lambda_{i_0}}\right|_{x} = 
\parfrac{\alpha(x)}{\lambda_{i_0}}. 
\end{equation} 
If $x$ is not in the divisor $\{z_{i_0}=0\}$, $u_{i_0}(x) = 0$. 
Therefore, by restricting to $x$, we have  
\[
(u_{i_0} \cdot d) \alpha(x) + u_{i_0}(x) 
\parfrac{\alpha(x)}{\lambda_{i_0}} = 0.  
\]
This shows that $\alpha(x) =0$ recursively 
from the highest order term in $\lambda_{i_0}$. 
Note that the same argument shows the uniqueness of $\tau_{d_0}$. 
This completes the induction. 

Next we solve for $\Upsilon$ assuming that $\tau'$ is already solved.  
Let $\omega$ be a K\"{a}hler class as above and $d_0\in \Eff(X_\Sigma)$ 
be a non-zero effective class. Suppose by induction that 
there exist $\tUpsilon_d$ for all $d$ with $\omega \cdot d 
< \omega \cdot d_0$ such that $\tUpsilon_d = \Upsilon_d y^d$ 
and that $\Upsilon = \sum_{\omega \cdot d < \omega \cdot d_0} 
\tUpsilon_d Q^d$ satisfies the differential equation 
\eqref{eq:flow_aftersubtractinglog} modulo terms of 
degree $\ge \omega\cdot d_0$. We regard $\Upsilon_d$ 
as a polynomial in $v_1,\dots,v_m$ and $\lambda_1,\dots,\lambda_m$. 
Comparing the coefficients of $Q^{d_0}$ of the differential equation 
and using Lemma \ref{lem:classical_shift}, 
we obtain 
\[
y_i \parfrac{\tUpsilon_{d_0}(v,\lambda)}{y_i} 
-  (v_i + \lambda_i) z^{-1}\left(\tUpsilon_{d_0}(v, \lambda -e_i z) 
- \tUpsilon_{d_0}(v,\lambda) \right) 
= \left(\begin{array}{l} 
\text{an expression in $\tUpsilon_d$} \\ 
\text{with $\omega \cdot d<\omega \cdot d_0$} 
\end{array}\right). 
\]
Here the right-hand side is of the form $g_i(v,\lambda) y^{d_0}$ 
for some polynomial $g_i(v,\lambda)$ in $v_1,\dots,v_m$ 
and $\lambda_1,\dots,\lambda_m$. 
Setting $\tUpsilon_{d_0} = \Upsilon_{d_0} y^{d_0}$, we have 
\[
(u_i \cdot d_0) \Upsilon_{d_0}(v,\lambda)  - 
 (v_i + \lambda_i) z^{-1}\left( \Upsilon_{d_0}(v,\lambda - e_i z) 
- \Upsilon_{d_0}(v,\lambda) \right)
= g_i(v,\lambda). 
\]
As before, we can find $i_0$ such that $u_{i_0} \cdot d_0 >0$. 
We can solve for $\Upsilon_{d_0}(v,\lambda)$ recursively 
from the highest order term in $\lambda_{i_0}$ using 
this equation with $i=i_0$. 
Setting $\Upsilon = \sum_{\omega \cdot d \le \omega \cdot d_0} 
\Upsilon_d Q^d$, we have 
\[
\parfrac{\Upsilon(y)}{y_i} \equiv \left[z^{-1} \bS_i(\tau(y)) \right]_+ \Upsilon(y) 
\]
modulo terms of degree $\ge \omega \cdot d_0$ for $i\neq i_0$ 
and modulo terms of degree $> \omega\cdot d_0$ for $i=i_0$. 
We have for $i\neq i_0$, 
\begin{align*} 
\parfrac{}{y_{i_0}} &
 \left(\parfrac{\Upsilon(y)}{y_i}  - [z^{-1} \bS_i(\tau(y))]_+ \Upsilon(y)\right) 
= \parfrac{^2 \Upsilon(y)}{y_i \partial y_{i_0}} 
-  \parfrac{}{y_{i_0}}[z^{-1}\bS_i(\tau(y)) ]_+\Upsilon(y) \\ 
& \quad \equiv \parfrac{}{y_i} [z^{-1} \bS_{i_0}(\tau(y))]_+ \Upsilon(y) 
- \parfrac{}{y_{i_0}}[z^{-1}\bS_i(\tau(y))]_+ \Upsilon(y) \\ 
& \quad \equiv 
\left[ z^{-1} (d_{S_i(\tau(y))} \bS_{i_0})(\tau(y)) \right]_+
\Upsilon(y) + [z^{-1} \bS_{i_0}(\tau(y))]_+ \parfrac{\Upsilon(y)}{y_{i}} 
 \\ 
& \quad \qquad - \left[ z^{-1} (d_{S_{i_0}(\tau(y))} \bS_i)(\tau(y)) \right]_+ 
\Upsilon(y) 
- [z^{-1} \bS_i(\tau(y))]_+ [z^{-1} \bS_{i_0}(\tau(y))]_+\Upsilon(y)
\end{align*} 
modulo terms of degree $> \omega \cdot d_0$. 
The commutativity of the flows $\bV_i$, $i=1,\dots,m$ implies for $i\neq j$, 
\begin{multline*} 
\left[z^{-1} (d_{S_i(\tau)} \bS_{j})(\tau) \right]_+ 
\Upsilon + [z^{-1} \bS_j(\tau)]_+ [z^{-1} \bS_i(\tau) ]_+\Upsilon \\ 
= \left[z^{-1} (d_{S_j(\tau)}\bS_i)(\tau)\right]_+ \Upsilon 
+ [z^{-1} \bS_i(\tau)]_+ [z^{-1} \bS_j(\tau) ]_+\Upsilon. 
\end{multline*} 
Therefore we have: 
\begin{equation} 
\label{eq:commutativity_check2}
\parfrac{}{y_{i_0}} 
 \left(\parfrac{\Upsilon(y)}{y_i}  - [z^{-1} \bS_i(\tau(y))]_+ \Upsilon(y)\right) 
\equiv [z^{-1} \bS_{i_0}(\tau(y)) ]_+ 
\left( \parfrac{\Upsilon(y)}{y_i} -  
[z^{-1} S_i(\tau(y))]_+  \Upsilon(y)
\right)
\end{equation} 
modulo terms of degree $>\omega\cdot d_0$. 
By the divisor equation, we have 
\[
y_i \left(\parfrac{\Upsilon(y)}{y_i} - [z^{-1} \bS_i(\tau(y))]_+\Upsilon(y) \right) 
= y_i \parfrac{\Upsilon(y)}{y_i} - [z^{-1} \bS_i(\tau';Qy)]_+ \Upsilon(y). 
\]
This is of the form $\alpha (Qy)^{d_0}$ for some $\alpha =\alpha(v,\lambda,z) 
\in H^*_\hT(X_\Sigma)$, modulo terms of degree $>\omega \cdot d_0$. 
Hence the differential equation \eqref{eq:commutativity_check2} implies 
via Lemma \ref{lem:classical_shift} that: 
\[
(u_{i_0} \cdot d_0) \alpha - 
u_{i_0} z^{-1}(\alpha(v,\lambda-e_{i_0} z, z) - \alpha(v,\lambda,z)) = 0. 
\]
We want to show that $\alpha=0$ in the cohomology group. 
By restricting this to a $T$-fixed point $x$ and using a similar 
argument as before (see \eqref{eq:restriction_differentiation}), 
we obtain 
\[
(u_{i_0} \cdot d_0) \alpha(x) - (v_{i_0}(x) + \lambda_{i_0}) 
z^{-1} \left(e^{-z \partial_{\lambda_{i_0}}}\alpha(x)  - \alpha(x)\right) = 0  
\]
for the restriction $\alpha(x)\in H^*_\hT(\pt)$ of $\alpha$ 
to $x$. 
We can easily see that $\alpha(x) = 0$ recursively 
from the highest order term in $\lambda_{i_0}$. 
Therefore $\alpha=0$. Note that the same argument also 
shows the uniqueness of $\Upsilon_{d_0}$. 
This completes the induction and the proof. 
\end{proof} 



We now come to the final step of the proof. Let $\tau(y)$, $\Upsilon(y,z)$ 
be as in Proposition \ref{prop:tau_Upsilon}. Then, as discussed 
in a paragraph preceding Proposition \ref{prop:tau_Upsilon}, 
\[
y \longmapsto \bbf(y) := z M(\tau(y),z) \Upsilon(y,z) 
\]
defines an integral manifold for the vector fields in \eqref{eq:vf_cone}. 
We shall show that $\bbf(y) = I(y,z)$. 
Using the divisor equation for $M(\tau,z)$, 
we find that $\bbf(y)$ is of the form: 
\begin{equation} 
\label{eq:asympt_I} 
\bbf(y) = z e^{\sum_{i=1}^m u_i \log y_i/z} 
\left( 
1 + \sum_{d \in \Eff(X_\Sigma)\setminus \{0\}} \bbf_d Q^d y^d 
\right) 
\end{equation} 
with $\bbf_d \in H_{\hT}(X)_\loc$. 
In view of Lemma \ref{lem:I_flow}, the following lemma 
shows that $\bbf(y) = I(y,z)$ and 
completes the proof of Theorem \ref{thm:mirrorthm}. 
\begin{lemma} 
The family of elements $y\mapsto \bbf(y)$ of the form \eqref{eq:asympt_I} 
satisfying $\partial_{y_i} \bbf(y) = z^{-1} \cS_i \bbf(y)$, $i=1,\dots,m$ 
is unique. 
\end{lemma}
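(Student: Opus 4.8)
The plan is to show that the differential equations $\partial_{y_i}\bbf(y) = z^{-1}\cS_i\bbf(y)$ determine all the coefficients $\bbf_d$ in the expansion \eqref{eq:asympt_I} uniquely, given the initial condition $\bbf|_{Q=0, y\to 0} = z e^{\sum_i u_i \log y_i/z}$ implicit in the prescribed form. First I would substitute the ansatz \eqref{eq:asympt_I} into the equation. Writing $\bbf(y) = z e^{\sum_i u_i\log y_i/z}\, g(y)$ with $g(y) = 1 + \sum_{d\neq 0}\bbf_d Q^d y^d$, the left-hand side produces $z e^{\sum u_i\log y_i/z}(u_i/z + \partial_{y_i})g(y)$ while the right-hand side, using the definition of $\cS_i$ via the diagram \eqref{eq:diag_cS} restricted to each fixed point $x$, contributes $z^{-1}\Delta_x(e_i)e^{-z\partial_{\lambda_i}}$ applied to the same prefactor times $g(y)$. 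Because all $T$-fixed points of $X_\Sigma$ are isolated, it suffices to check the equation at each $x\in X_\Sigma^T$ separately, exactly as in the proof of Lemma \ref{lem:I_flow}.

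The key observation is that $z^{-1}\Delta_x(e_i)e^{-z\partial_{\lambda_i}}$ shifts the Novikov degree by $d_i(x) = \sigma_x - \sigma_{\min}$, which is a nonzero effective class whenever $x\notin F_{\min}$, and is $0$ when $x\in F_{\min} = \{z_i = 0\}$. So when I extract the coefficient of a fixed monomial $Q^{d_0}y^{d_0}$ from both sides of the equation at the point $x$, the right-hand side involves only coefficients $\bbf_{d}$ with $d = d_0 - d_i(x)$, i.e.\ with $\omega\cdot d < \omega\cdot d_0$ for a suitable K\"ahler class $\omega$ (here I use that $d_i(x)$ is effective and nonzero for $x\notin F_{\min}$, as in Lemma \ref{lem:I_flow}). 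Thus the recursion expresses $\bbf_{d_0}$ in terms of lower-degree coefficients. More precisely, comparing coefficients of $Q^{d_0}y^{d_0}$ and using $u_i\cdot d_0$ as in Lemma \ref{lem:fixed_weight}, I obtain at each fixed point $x$ an equation of the shape
\[
(u_i(x)\cdot d_0)\,\bbf_{d_0}(x) + (\text{terms with }\partial_{\lambda_i}) = (\text{expression in }\bbf_d(x), \ \omega\cdot d < \omega\cdot d_0).
\]
Since the chosen K\"ahler class is a non-negative combination of the $u_j$, there is an index $i_0$ with $u_{i_0}\cdot d_0 > 0$; using the equation with $i = i_0$ one solves for $\bbf_{d_0}$, recursively in the order of the polynomial in $\lambda_{i_0}$, exactly as in the proof of Proposition \ref{prop:tau_Upsilon}.

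The main obstacle — the only genuinely nontrivial point — is that this recursion produces a candidate $\bbf_{d_0}(x)$ at each fixed point $x$ from a single choice of $i_0$, and one must verify that the equations for the \emph{other} indices $i\neq i_0$ are then automatically satisfied. This is exactly where the commutativity of the vector fields \eqref{eq:vf_cone} (Corollary \ref{cor:commuting}) enters: differentiating the error $\partial_{y_i}\bbf - z^{-1}\cS_i\bbf$ with respect to $y_{i_0}$ and using $[\cS_{i_0},\cS_i] = 0$ shows that the error satisfies a homogeneous linear recursion of the same type, which forces it to vanish by the same highest-order-in-$\lambda_{i_0}$ argument — mirroring the use of \eqref{eq:commutativity_check} and \eqref{eq:commutativity_check2} in Proposition \ref{prop:tau_Upsilon}. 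Uniqueness then follows because at each stage the coefficient $\bbf_{d_0}$ is pinned down with no freedom; existence is already guaranteed by $I(y,z)$ itself (Lemma \ref{lem:I_flow}), so in fact the lemma could be proved by combining existence from $I$ with the uniqueness argument above, and one concludes $\bbf(y) = I(y,z)$.
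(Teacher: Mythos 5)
Your overall skeleton (induct on $\omega\cdot d_0$, restrict to the isolated $T$-fixed points, extract the coefficient of $Q^{d_0}y^{d_0}$, use that $\cS_i$ shifts the Novikov degree by $d_i(x)$) is the right one, but the step you lean on at the problematic fixed points is a genuine gap. The coefficients $\bbf_d$ (hence the coefficients $g_d$ of a difference of two solutions) lie in $H_\hT(X)_{\loc}$, i.e.\ they are \emph{rational} functions of $\lambda$ and $z$, not polynomials, so the ``solve recursively from the highest order term in $\lambda_{i_0}$'' device that works for $\tau_{d_0}$ and $\Upsilon_{d_0}$ in Proposition \ref{prop:tau_Upsilon} is not available here. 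Concretely, if you fix a single index $i_0$ with $u_{i_0}\cdot d_0>0$ (chosen globally, as you propose) and look at a fixed point $x$ lying on $\{z_{i_0}=0\}$, then $d_{i_0}(x)=0$ and $\Delta_x(e_{i_0})=u_{i_0}(x)$, and the $Q^{d_0}y^{d_0}$-coefficient of the $i_0$-equation at $x$ is the difference equation $z\,(u_{i_0}\cdot d_0)\,\beta = u_{i_0}(x)\bigl(e^{-z\partial_{\lambda_{i_0}}}\beta-\beta\bigr)$ for $\beta=\bbf_{d_0}(x)$ alone; no lower-degree coefficients enter at all, because $\Delta_x(e_{i_0})$ carries no Novikov factor there. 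This equation has a nonzero kernel in $\Frac(H_\hT^*(\pt))$: for instance $\beta=\prod_{c=1}^{u_{i_0}\cdot d_0}\bigl(u_{i_0}(x)+cz\bigr)^{-1}$ solves it (and the $I$-function's own coefficients at such $x$ are exactly of this shape, which is why they are nonzero). So the $i_0$-equation alone does not pin down $\bbf_{d_0}(x)$ at those points, and your commutativity paragraph cannot repair this: its ``the error satisfies a homogeneous recursion, hence vanishes'' step relies on the same inapplicable polynomial argument, and in any case a compatibility statement is not needed for a pure uniqueness claim.

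The paper's proof avoids all of this by working with the difference $g=\bbf_1-\bbf_2$ (so only one equation per fixed point is ever used and there is no compatibility issue) and, crucially, by choosing the index \emph{depending on the fixed point}: for each $x$ one picks $i_0$ with $x\notin\{z_{i_0}=0\}$ and $u_{i_0}\cdot d_0>0$, which exists because the K\"ahler class is a \emph{positive} combination of the (non-equivariant limits of the) $u_i$ with $x\notin\{z_i=0\}$ --- a finer fact than the global non-negative combination you invoke. For such $i_0$ one has $u_{i_0}(x)=0$ and $d_{i_0}(x)\neq 0$ effective, so the $Q^{d_0}$-coefficient of $\partial_{y_{i_0}}g=z^{-1}\cS_{i_0}g$ at $x$ collapses to $(u_{i_0}\cdot d_0)\,g_{d_0}(x)=0$, with no $\lambda$-shift terms and with the right-hand side killed by the induction hypothesis; hence $g_{d_0}(x)=0$ for every $x$ and $g_{d_0}=0$. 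If you make your choice of $i_0$ depend on $x$ in this way and argue on the difference, your recursion becomes the paper's proof; as written, it does not go through.
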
 
\begin{proof} 
Suppose that we have two families $\bbf_1(y)$, $\bbf_2(y)$ of elements 
of the form \eqref{eq:asympt_I} satisfying 
$\partial_{y_i} \bbf_j(y) = z^{-1} \cS_i \bbf_j(y)$, $j=1,2$, 
$i=1,2,\dots,m$. 
The difference $g(y) = \bbf_1(y) - \bbf_2(y)$ satisfies the same 
differential equation and is of the form 
\[
g(y) = z e^{\sum_{i=1}^m u_i \log y_i/z} \sum_{d\in \Eff(X_\Sigma) 
\setminus \{0\}} g_d Q^d y^d. 
\]
Choose a K\"{a}hler class $\omega$ and suppose by induction 
that we know $g_d =0$ for all $d \in \Eff(X_\Sigma)$ 
with $\omega \cdot d < \omega \cdot d_0$ for some 
$d_0 \in \Eff(X_\Sigma)\setminus \{0\}$. 
Let $x$ be a $T$-fixed point. Let $\delta$ be the set of 
indices $i$ such that $x$ does not lie on the toric 
divisor $\{z_i =0\}$. The K\"{a}hler class $\omega$ can be 
written as a positive linear combination of non-equivariant 
limits of $u_i$ with $i\in \delta$. Therefore, there exists 
$i_0\in \delta$ such that $u_{i_0} \cdot d_0>0$. 
The coefficient in front of $Q^{d_0}$ of the equation 
$\partial_{y_{i_0}} g (y)= z^{-1} \cS_{i_0} g(y)$ 
restricted to the fixed point $x$ 
gives: 
\[
(u_{i_0} \cdot d_0) g_{d_0}(x) = 0 
\]
since $x$ does not lie on the minimal fixed component 
$\{z_{i_0}=0\}$ with respect to $e_{i_0}$. 
Therefore $g_{d_0}(x)=0$. Since $x$ is arbitrary, $g_{d_0} = 0$. 
This completes the induction and the proof. 
\end{proof} 

\subsection{Example} 
Consider the toric variety $X_\Sigma = \PP^{m-1}$. 
In this case we have $m$ shift operators $\bS_1,\dots,\bS_m$ 
corresponding to $m$ toric divisors. It is well-known that 
the mirror map $\tau(y)$ and the function $\Upsilon(y)$ are trivial: 
\[
\tau(y) = \sum_{i=1}^m u_i \log y_i, \qquad 
\Upsilon(y) = 1. 
\]
Generalizing the differential equation in Lemma \ref{lem:I_flow}, 
we can show that 
\[
\cS_{i_1} \cdots \cS_{i_a} I(y,z)= 
z \partial_{y_{i_1}}\cdots z\partial_{y_{i_a}} I(y,z) 
\]
when $i_1,\dots,i_a$ are distinct. 
This together with the intertwining property $\cS_i \circ M(\tau,z) 
= M(\tau,z) \circ \bS_i(\tau)$ and the divisor equation 
$\bS_i(\tau(y)) = y_i^{-1}\bS_i(0;Qy)$ implies: 
\[
\bS_{i_1}(0;Qy) \cdots \bS_{i_a} (0;Qy) 1 = z \nabla_{u_{i_1}} 
\cdots z \nabla_{u_{i_a}} 1 \Bigr|_{\tau(y)} 
= 
\begin{cases} 
u_{i_1} \cdots u_{i_a} & \text{if $a<m$;}\\ 
Qy_1\cdots y_m & \text{if $a=m$},  
\end{cases} 
\]
where $i_1,\dots,i_a$ are distinct and 
$\bS_i(0;Qy)$ means $\bS_i(0)|_{Q \to Qy_1\cdots y_m}$. 
This determines the action of $\bS_i(0)$ completely. 
Since the one-parameter subgroup $e_1+ \cdots + e_m$ acts 
on $\PP^{m-1}$ trivially, we have a relation 
$\bS_1(\tau) \circ \cdots \circ \bS_m(\tau)=Q$ 
by Corollary \ref{cor:commuting}. 
Writing $u_i = v+ \lambda_i$ for $i=1,\dots,m$, we recover the relation: 
\[
(z\nabla_{v} +\lambda_1) \cdots (z\nabla_v + \lambda_m) 1 
\Bigr|_{\tau=0}= Q 
\]
in the equivariant small quantum $D$-module of $\PP^{m-1}$. 

\subsection{Remarks} 
We first remark a relation to the results in 
\cite{Gonzalez-Iritani:Selecta}. Let $X_\Sigma$ be 
a compact toric manifold such that $c_1(X_\Sigma)$ 
is nef. In this case, the mirror map $\tau(y)$ takes 
values in $H^2_T(X)$. We write 
\[
\tau(y) = \sum_{i=1}^m (\log y_i  - g^i(y)) u_i
\]
for some $\C$-valued functions $g^{i}(y)$.  
Using the divisor equation from Remark \ref{rem:div_shift}, the differential 
equation in Proposition \ref{prop:tau_Upsilon} implies: 
\[
y_i \parfrac{\tau(y)}{y_i} = e^{g^{i}(y)} S_i(0; Q e^{\tau(y)}) 
\]
where we set $S_i(0; Q e^{\tau(y)}) = S_i(0)|_{Q \to Q e^{\tau(y)}}$. 
The left-hand side is called the Batyrev element in 
\cite{Gonzalez-Iritani:Selecta} and this 
recovers the relationship between the Seidel and the Batyrev 
elements in \cite[Theorem 1.1]{Gonzalez-Iritani:Selecta}. 

We should also recover a mirror theorem for the 
extended $I$-function \cite{CCIT:mirrorthm} by considering 
the shift operators corresponding to general semi-negative 
cocharacters $k\in (\Z_{\ge 0})^m \subset \Hom(\C^\times ,T)$. 
It would be also interesting to see if our method can be generalized 
to toric orbifolds \cite{CCIT:mirrorthm,Cheong-CF-Kim}, 
toric fibrations \cite{Brown:toric_fibration}, 
or other $T$-varieties.

\bibliographystyle{alpha}
\bibliography{shift_toric}
\end{document}